\newcommand{\CC}{{\mathbb C}}
\newcommand{\NN}{{\mathbb N}}
\newcommand{\PP}{{\mathbb P}}
\newcommand{\QQ}{{\mathbb Q}}
\newcommand{\RR}{{\mathbb R}}
\newcommand{\ZZ}{{\mathbb Z}}
\def\C{\mathbb{C}}
\def\P{\mathbb{P}}
\def\Q{\mathbb{Q}}
\def\Z{\mathbb{Z}}
\let\mathcal\mathscr
\def\cC{\mathcal{C}}
\def\cD{\mathcal{D}}
\def\cE{\mathcal{E}}
\def\cF{\mathcal{F}}
\def\cG{\mathcal{G}}
\def\cI{\mathcal{I}}
\def\cL{\mathcal{L}}
\def\cM{\mathcal{M}}
\def\cN{\mathcal{N}}
\def\cO{\mathcal{O}}
\def\cQ{\mathcal{Q}}
\def\cX{\mathcal{X}}
\def\cY{\mathcal{Y}}
\def\vv{\ensuremath{\mathbf v}}
\def\phi{{\varphi}}
\DeclareMathOperator{\Bl}{Bl}
\DeclareMathOperator{\ch}{ch}
\DeclareMathOperator{\CH}{CH}
\DeclareMathOperator{\cl}{cl}
\DeclareMathOperator{\coh}{coh}
\DeclareMathOperator{\coker}{Coker}
\DeclareMathOperator{\Def}{Def}
\DeclareMathOperator{\Diag}{Diag}
\DeclareMathOperator{\Db}{D\textsuperscript{\rm b}}
\def\div{\mathop{\rm div}\nolimits}
\DeclareMathOperator{\divisore}{div}
\DeclareMathOperator{\End}{End}
\DeclareMathOperator{\Ext}{Ext}
\DeclareMathOperator{\Fix}{Fix}
\DeclareMathOperator{\GL}{GL}
\DeclareMathOperator{\Gr}{Gr}
\DeclareMathOperator{\Hom}{Hom}
\def\Im{\mathop{\rm Im}\nolimits}
\DeclareMathOperator{\NS}{NS}
\DeclareMathOperator{\Pic}{Pic}
\DeclareMathOperator{\Proj}{Proj}
\DeclareMathOperator{\Spec}{Spec}
\DeclareMathOperator{\Sing}{Sing}
\DeclareMathOperator{\Sym}{Sym}
\DeclareMathOperator{\td}{td}
\newcommand{\es}{{\emptyset}}
\newcommand{\la}{\langle}
\newcommand{\ov}{\overline}
\newcommand{\ra}{\rangle}
\newcommand{\wh}{\widehat}
\newcommand{\wt}{\widetilde}
\def\lra{\longrightarrow}
\def\llra{\hbox to 10mm{\rightarrowfill}}
\def\lllra{\hbox to 15mm{\rightarrowfill}}
\def\llla{\hbox to 10mm{\leftarrowfill}}
\def\lllla{\hbox to 15mm{\leftarrowfill}}
\def\dra{\dashrightarrow}
\def\hra{\hookrightarrow}
\newtheorem{lemm}{Lemma}[section]
\newtheorem{theo}[lemm]{Theorem}
\newtheorem*{MainThm*}{Main Theorem}
\newtheorem{coro}[lemm]{Corollary}
\newtheorem{prop}[lemm]{Proposition}
\theoremstyle{definition}
\newtheorem{defi}[lemm]{Definition}
\newtheorem{rema}[lemm]{Remark}
\newtheorem{conj}[lemm]{Conjecture}
\newtheorem{exam}[lemm]{Example}
\theoremstyle{remark}
\newtheorem*{remark*}{Remark}
\newtheorem*{note*}{Note}
\newtheorem*{lemmared*}{Lemma}
\def\@tocline#1#2#3#4#5#6#7{\relax
  \ifnum #1>\c@tocdepth % then omit
  \else
    \par \addpenalty\@secpenalty\addvspace{#2}%
    \begingroup \hyphenpenalty\@M
    \@ifempty{#4}{%
      \@tempdima\csname r@tocindent\number#1\endcsname\relax
    }{%
      \@tempdima#4\relax
    }%
    \parindent\z@ \leftskip#3\relax \advance\leftskip\@tempdima\relax
    \rightskip\@pnumwidth plus4em \parfillskip-\@pnumwidth
    #5\leavevmode\hskip-\@tempdima
      \ifcase #1
       \or\or \hskip 1em \or \hskip 2em \else \hskip 3em \fi%
      #6\nobreak\relax
    \dotfill\hbox to\@pnumwidth{\@tocpagenum{#7}}\par
    \nobreak
    \endgroup
  \fi}
\setlist[itemize]{noitemsep,nolistsep}
\setlist[enumerate]{noitemsep,nolistsep}
\def\citestacks#1{\cite[\href{https://stacks.math.columbia.edu/tag/#1}{Tag #1}]{stacks-project}}
\title{The geometry of antisymplectic involutions, II}
\begin{document}

\author{Laure Flapan}
\address{\parbox{0.9\textwidth}{Michigan State University, Department of Mathematics\\[1pt]
619 Red Cedar Road, East Lansing, MI 48824, USA
\vspace{1mm}}}
\email{{flapanla@msu.edu}}

\author{Emanuele Macr\`i}
\address{\parbox{0.9\textwidth}{Universit\'e Paris-Saclay, CNRS, Laboratoire de Math\'ematiques d'Orsay\\[1pt]
Rue Michel Magat, B\^at. 307, 91405 Orsay, France
\vspace{1mm}}}
\email{{emanuele.macri@universite-paris-saclay.fr}}

\author{Kieran G.~O'Grady}
\address{\parbox{0.9\textwidth}{Sapienza Universit\`a di Roma, Dipartimento di Matematica\\[1pt]
P.le A. Moro 5, 00185 Roma, Italia
\vspace{1mm}}}
\email{{ogrady@mat.uniroma1.it}}

\author{Giulia Sacc\`a}
\address{\parbox{0.9\textwidth}{Columbia University, Department of Mathematics\\[1pt]
2990 Broadway, New York, NY 10027, USA
\vspace{1mm}}}
\email{{gs3032@columbia.edu}}

\subjclass[2020]{14C20, 14D06, 14D20, 14F08, 14J42, 14J45, 14J60}
\keywords{Projective hyper-K\"ahler manifolds, antisymplectic involutions, moduli spaces, Bridgeland stability, Fano manifolds, varieties of general type}
\thanks{L.F.~is partially supported by NSF grant DMS-2200800. E.M.~is partially supported by the ERC Synergy Grant ERC-2020-SyG-854361-HyperK. K.O'G.~is partially supported by the PRIN 2017YRA3LK \lq\lq Moduli and Lie Theory\rq\rq.  G.S.~is partially supported by the NSF CAREER grant DMS-2144483 and by the NSF FRG grant DMS-2052750.}

\begin{abstract}
We continue our study  of fixed loci of antisymplectic involutions on projective hyper-K\"ahler manifolds of $\mathrm{K3}^{[n]}$-type induced by an ample class of square 2 in the Beauville-Bogomolov-Fujiki lattice.
We prove that if the divisibility of the ample class is~2, then one connected component of the fixed locus is a Fano manifold of index~3, thus generalizing  to higher dimensions the case of the LLSvS 8-fold associated to a cubic fourfold.
We also show that, in the case of the LLSvS 8-fold associated to a cubic fourfold, the second component of the fixed locus is of general type, thus answering a question by Manfred Lehn.
\end{abstract}

\maketitle

\tableofcontents
\setcounter{tocdepth}{1}

%%%%%%%%%%%%%%%%%%%%%

\section{Introduction}\label{sec:intro}

Many of the known explicit constructions of projective hyper-K\"ahler ({\rm HK}) manifolds of \emph{$\mathrm{K3}^{[n]}$-type} (i.e., a  deformation of the Hilbert scheme of $n$ points on a K3 surface) arise from a Fano variety.
Classically, Beauville--Donagi constructed a maximal family of polarized HK manifolds of $\mathrm{K3}^{[2]}$-type  as Fano varieties of lines on  cubic fourfolds \cite{BD:Fano}. 
More recently, Lehn--Lehn--Sorger--van Straten constructed (see Example \ref{ex:LLSvS} below) a maximal family of  polarized HK manifolds of $\mathrm{K3}^{[4]}$-type as equivalence classes of twisted cubics on cubic fourfolds~\cite{LLSvS:cubics}. 
Other explicit constructions of families of polarized HK manifolds of $\mathrm{K3}^{[n]}$-type from Fano varieties arise for instance in \cite{DV: varieties,IM:Fano, DK: GM varieties,IM: magic square,FM:Fano}. 
In the above examples, there is a tight relationship between the Hodge theory of the underlying Fano variety, or at a finer level its derived category, and the geometry of the resulting HK manifold, see also for instance \cite{BLMNPS:family,PPZ: GM,LPZ: twisted}.

The present paper aims to strengthen and formalize these observed connections between Fano varieties and HK manifolds of $\mathrm{K3}^{[n]}$-type by proposing the following partial inverse of the above constructions.
We begin with a projective hyper-K\"ahler manifold $X$ of $\mathrm{K3}^{[n]}$-type equipped with an ample class $\lambda$ of square $2$. There is a unique involution $\tau_{\lambda}$ of $X$ whose action on $H^2(X)$ is equal to the reflection in $\lambda$, see equation~\eqref{riflessione} below.  Since $\tau_{\lambda}$ is antisymplectic, its fixed locus $\Fix(\tau_{\lambda})$ is Lagrangian. 
We showed in ~\cite{involutions1} that the number of connected components of $\Fix(\tau_{\lambda})$ is equal to the divisibility $\divisore(\lambda)$ of $\lambda$  in the Beauville--Bogomolov--Fujiki lattice (in Section~\ref{subsec:AntisymplecticIntro} we recall the notion of divisibility). Note that since $\lambda$ has square 2, its divisibility $\divisore(\lambda)$ must be either $1$ or $2$. 
The first main result of the present paper (see Theorem \ref{thm:main1} below) shows that in the case that $\divisore(\lambda)=2$, one of the two connected components of $\Fix(\tau_{\lambda})$ is a Fano variety (of index 3). 

\begin{exam}\label{ex:LLSvS}
The simplest example with  $\lambda$ of divisibility $2$ is given by the LLSvS $8$-fold $X$ associated to a smooth cubic fourfold $W\subset\PP^5$ containing no planes, parametrizing equivalence classes of twisted cubic curves contained in $W$, see~\cite{LLSvS:cubics}. 
The polarization $\lambda$ is the pull-back of the Pl\"ucker polarization on $\Gr(3,\PP^5)$ via the rational map $X\dra \Gr(3,\PP^5)$ which maps a twisted cubic to its linear span (equivalent cubics span the same $3$ dimensional linear subspace).
For a description of the involution $\tau_\lambda$ in this case see~\cite{Lehn:Oberwolfach,CCL:Chow}. It is known that all  HK manifolds $X$ of K3$^{[4]}$-type with a polarization $\lambda$ of square $2$ and divisibility $2$ arise as LLSvS $8$-folds in this way, see~\cite[Proposition B.12]{Debarre:Survey}. On these $8$-folds,  one connected component of $\Fix(\tau_{\lambda})$ is isomorphic to the cubic fourfold $W$, a Fano variety of index $3$. 
\end{exam}

Our second main result, Theorem \ref{thm:main2} below, establishes that the other connected component of $\Fix(\tau_{\lambda})$ in Example~\ref{ex:LLSvS} has ample canonical bundle, thereby answering a question by Manfred Lehn. In fact we  conjecture that the other connected component of $\Fix(\tau_{\lambda})$ when $\divisore(\lambda)=2$, as well as the only 
connected component of $\Fix(\tau_{\lambda})$ when $\divisore(\lambda)=1$, has ample canonical bundle. The  conjecture is motivated by Theorem \ref{thm:main2} and by the following examples.

\begin{exam}\label{ex:divuno}
The simplest example with  $\lambda$ of divisibility $1$ is provided by the double cover $\pi\colon X\to\PP^2$ branched over a smooth sextic curve, with $\lambda=\pi^{*}c_1(\cO_{\PP^2}(1))$ (thus $X$ is a {\rm K3} surface). The involution $\tau_\lambda$ interchanges the sheets of the double cover $\pi$, and the fixed locus of $\tau_\lambda$ is isomorphic to the branch curve, i.e., a smooth plane sextic curve. The next simplest example 
with  $\lambda$ of divisibility $1$ is provided by the double cover $f\colon X\to Y$ of a general EPW sextic $Y\subset \PP^5$, with $\lambda=f^{*}c_1(\cO_{Y}(1))$. The involution $\tau_\lambda$ is the covering involution, and the fixed locus is the singular locus of $Y$, a surface 
of general type by the results in~\cite{Kieran:numhilb2}. 
\end{exam}

We also expect a tight connection between the Hodge structure of a polarized hyper-K\"ahler manifold $(X,\lambda)$ 
of $\mathrm{K3}^{[n]}$-type
with $\lambda$ of square $2$ and divisibility $2$, and the Hodge structure of the Fano connected component of 
$\Fix(\tau_{\lambda})$. At a finer level, their bounded derived categories  of coherent sheaves should be strictly related. These developments will be the subject of future work.

Let us explain at least one reason for being interested in the component of $\Fix(\tau_{\lambda})$ (conjecturally) of general-type.  For $(X,\lambda)$ a polarized {\rm HK} manifold of {\rm K3}$^{[n]}$-type with $q_X(\lambda)=2$ and $\divisore(\lambda)=1$ and $n\in\{1,2\}$ it is known that a multiple of the cycle $\Fix(\tau_{\lambda})$ moves in a covering family of Lagrangian subvarieties of $X$ (if $n=1$ this is true because $\Fix(\tau_{\lambda})$ is a very ample divisor; if $n=2$ this is proved in~\cite{Ferretti:thesis,Kieran:LagrangianCovering}). Thus, 
an interesting question for arbitrary $n$, both in the case $\divisore(\lambda)=1$ and $\divisore(\lambda)=2$, is whether a multiple of this conjecturally general-type component of $\Fix(\tau_{\lambda})$ could provide a
 covering family of Lagrangian subvarieties in $X$.
This is open already in the  case $n=4$ and $\divisore(\lambda)=2$ (the LLSvS variety).

\subsection{Antisymplectic involutions}\label{subsec:AntisymplecticIntro}

Here and in what follows $X$ is a {\rm HK}  manifold of $\mathrm{K3}^{[n]}$-type, in particular it has dimension $2n$. 
We let $q_X$ be the Beauville--Bogomolov--Fujiki quadratic form on $H^2(X)$, and by abuse of notation we denote by the same letter the associated bilinear symmetric form. 
If $\alpha\in H^2(X,\ZZ)$, then the \emph{divisibility} of $\alpha$ is the non negative generator of the ideal $\{q_X(\alpha,w)\,:\,w\in H^2(X,\ZZ)\}\subset\ZZ$; it is denoted by $\divisore(\alpha)$. 

We assume that $X$ has a polarization $\lambda$ such that $q_X(\lambda)=2$. Note that $\divisore(\lambda)\in\{1,2\}$.  For each $n$ there exist $(X,\lambda)$ as above with 
$\divisore(\lambda)=1$. On the other hand there exist $(X,\lambda)$ as above with 
$\divisore(\lambda)=2$ if and only if 
 $4\,|\,n$.
To a polarization as above  one can associate a (unique) antisymplectic involution
\[
\tau_{\lambda}\colon X\xlongrightarrow{\cong}X
\]
which acts on $H^2(X)$ as reflection in $\lambda$:
\begin{equation}\label{riflessione}
\begin{matrix}
H^2(X) & \overset{\tau^{*}_{\lambda}}{\lra} & H^2(X) \\
x & \longmapsto & -x+q_X(\lambda,x)\lambda. 
\end{matrix}
\end{equation}
Furthermore there exists (a unique) lift of the involution $\tau_{\lambda}$ to an involution of the line bundle $L$ on $X$ such that $\lambda=c_1(L)$: this 
defines a $\mu_2$-action on $L$ which acts trivially on $H^0(X,L)$ (this is proved in Proposition~\ref{prop:ChoiceLinearizationFamily} for the case $\divisore(\lambda)=2$ and in Proposition~\ref{prop:sezinv} for the case $\divisore(\lambda)=1$).

Now let $\Fix(\tau_{\lambda})$ be the fixed locus  of $\tau_{\lambda}$, a Lagrangian submanifold of $X$. 
In~\cite{involutions1} we proved that  the number of connected components of $\Fix(\tau_{\lambda})$ is equal to $\divisore(\lambda)$. 
By considering the $\mu_2$-action on $L$ introduced above, we can be more precise. 
Let  $\C_{\pm}$ be the trivial, respectively determinantal, irreducible $\mu_2$-representation, and set
\begin{equation*}\label{eq:SigmaPM}
\begin{split}
&\Fix(\tau_{\lambda})_+ :=\{ x\in \Fix(\tau_\lambda) \,:\, L\vert_x \cong \C_+\},\quad \text{the \emph{positive} fixed component}\\
&\Fix(\tau_{\lambda})_- :=\{ x\in \Fix(\tau_\lambda) \,:\, L\vert_x \cong \C_-\},\quad \text{the \emph{negative} fixed component}.
\end{split}
\end{equation*}
Then $\Fix(\tau_{\lambda})=\Fix(\tau_{\lambda})_+$ if $\divisore(\lambda)=1$, and $\Fix(\tau_{\lambda})_+,\Fix(\tau_{\lambda})_-$ are the two irreducible components of the fixed locus $\Fix(\tau_{\lambda})$ if $\divisore(\lambda)=2$.

\subsection{The negative component}\label{subsec:Fano}

Our first main result states that if $\divisore(\lambda)=2$ then the negative component $\Fix(\tau_{\lambda})_-$ is a Fano manifold of index~3.

\begin{theo}\label{thm:main1}
Let $(X,\lambda)$ be a polarized {\rm HK} manifold of {\rm K3}$^{[n]}$-type, with $q_X(\lambda)=2$ and $\divisore(\lambda)=2$ (hence $4\,|\,n$). 
Let $L$ be the ample line bundle such that $c_1(L)=\lambda$. 
Then $\Fix(\tau_{\lambda})_-$ is a Fano manifold of dimension~$n$ and index~$3$. More precisely, we have
\begin{equation}\label{canmenotre}
\omega_{\Fix(\tau_{\lambda})_-}\cong (L^{\vee})^{\otimes 3} \vert_{\Fix(\tau_{\lambda})_-}.
\end{equation}
\end{theo}

\subsection{The positive component}\label{subsec:LehnQuestion}

We conjecture that the  component $\Fix(\tau_{\lambda})_+$ is of general type, both when $\divisore(\lambda)=1$ and $\divisore(\lambda)=2$.
More precisely we expect the canonical line bundle of $\Fix(\tau_{\lambda})_+$ to be a rational positive multiple of the restriction of $\lambda$.

Our second main result  confirms the above expectation in the case $n=4$ and $\divisore(\lambda)=2$.

\begin{theo}\label{thm:main2}
Let $(X,\lambda)$ be a polarized {\rm HK} manifold of dimension $8$ of {\rm K3}$^{[4]}$-type, with $q_X(\lambda)=2$ and $\divisore(\lambda)=2$. Let $L$ be the ample line bundle such that $c_1(L)=\lambda$.
Then
%the following equality holds in $\mathrm{NS}(\Fix(\tau_{\lambda})_+)_{\Q}$: 
\[
\omega_{\Fix(\tau_{\lambda})_+}\cong L^{\otimes 3}\vert_{\Fix(\tau_{\lambda})_+}.
\]
In particular $\Fix(\tau_{\lambda})_+$ is of general-type.
\end{theo}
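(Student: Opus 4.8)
The plan is to combine Theorem~\ref{thm:main1} with an adjunction/Lagrangian argument, exploiting that for $n=4$ the HK manifold $X$ is the LLSvS $8$-fold of a cubic fourfold $W$ (by~\cite[Proposition B.12]{Debarre:Survey}), so one has an explicit geometric model for $X$, the involution $\tau_\lambda$, and its fixed locus. The first step is to recall that $\mathrm{Fix}(\tau_\lambda)$ is a Lagrangian submanifold of the holomorphic symplectic $8$-fold $X$: since $\tau_\lambda$ is antisymplectic, $T_{\mathrm{Fix}(\tau_\lambda)}$ is isotropic, hence Lagrangian of dimension $4$, and therefore the symplectic form gives an isomorphism $N_{\mathrm{Fix}(\tau_\lambda)/X}\cong \Omega^1_{\mathrm{Fix}(\tau_\lambda)}$ on each component. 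By adjunction, $\omega_{\mathrm{Fix}(\tau_\lambda)_\pm}= \omega_X|_{\mathrm{Fix}(\tau_\lambda)_\pm}\otimes \det N_{\mathrm{Fix}(\tau_\lambda)_\pm/X}= \det N_{\mathrm{Fix}(\tau_\lambda)_\pm/X}=\omega_{\mathrm{Fix}(\tau_\lambda)_\pm}$, which is only a tautology; so instead I would use the Lagrangian splitting of $T_X|_{\mathrm{Fix}(\tau_\lambda)_+}$ into $(+1)$- and $(-1)$-eigenspaces for $d\tau_\lambda$, namely $T_X|_{\mathrm{Fix}(\tau_\lambda)_+}= T_{\mathrm{Fix}(\tau_\lambda)_+}\oplus \Omega^1_{\mathrm{Fix}(\tau_\lambda)_+}$, together with the geometric description of $\mathrm{Fix}(\tau_\lambda)_+$.

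The heart of the argument should be to identify $\mathrm{Fix}(\tau_\lambda)_+$ explicitly. For the LLSvS $8$-fold, the involution $\tau_\lambda$ has been described in~\cite{Lehn:Oberwolfach,CCL:Chow}: the negative component is the cubic fourfold $W$ itself (a Fano fourfold of index $3$, matching Theorem~\ref{thm:main1}), and the positive component $Z:=\mathrm{Fix}(\tau_\lambda)_+$ should be identified with a known fourfold built from $W$ — plausibly related to the variety of lines or to a Lehn--Lehn--Sorger--van Straten-type construction; I would pin this down from the loc.\ cit.\ description. Once $Z$ is identified with a concrete projective fourfold carrying the restricted Plücker polarization $\lambda|_Z$, the strategy is to compute $c_1(\omega_Z)$ in $\mathrm{NS}(Z)_\QQ$ by comparing it to $\lambda|_Z$. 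Two complementary routes are available: (i) an intrinsic route, using the Lagrangian eigenbundle decomposition above to write $c_1(\omega_Z)=c_1(\Omega^1_Z)=$ (first Chern class of the $(-1)$-eigenbundle of $d\tau_\lambda$ on $T_X|_Z$), then restricting the known Chern-class relations on $X$ — in particular $c_1(X)=0$ and the relation between $\lambda^2$, $c_2(X)$ forced by the Beauville--Bogomolov form — and tracking how $c_2(T_X)$, $\lambda^2$ restrict to the Lagrangian $Z$; (ii) an extrinsic route, computing $K_Z$ directly in the geometric model and then expressing it through $\lambda|_Z$ using that $\mathrm{NS}(Z)_\QQ$ is (expected to be) one-dimensional, generated by $\lambda|_Z$, so only the numerical coefficient needs to be determined, e.g.\ by an intersection-number computation against a curve class, or by restriction from the Plücker embedding.

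The main obstacle I expect is step two: identifying $\mathrm{Fix}(\tau_\lambda)_+$ precisely enough to compute its canonical class, and in particular establishing that $\mathrm{NS}(\mathrm{Fix}(\tau_\lambda)_+)_\QQ$ is spanned by $\lambda|_{\mathrm{Fix}(\tau_\lambda)_+}$ so that the equality $c_1(\omega_Z)=3\lambda|_Z$ is forced by a single numerical coefficient. If the full geometric identification is unavailable, a fallback is to work family-wise: the polarized pairs $(X,\lambda)$ of this type form an irreducible moduli space (an open subset of the moduli of cubic fourfolds), the fixed loci fit into a smooth family over it, and both sides of the asserted equality are deformation-invariant in $\mathrm{NS}(\mathrm{Fix}(\tau_\lambda)_+)_\QQ$; hence it suffices to verify the equality for one convenient cubic fourfold $W$ — e.g.\ one with extra symmetry or with Picard rank computations accessible — where the geometry of $Z$ and its canonical class can be made fully explicit. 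Once $c_1(\omega_Z)=3\lambda|_Z$ is established, ampleness of $\lambda$ immediately gives that $\omega_Z$ is ample, so $Z$ is of general type, completing the proof.
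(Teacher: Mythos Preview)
Your proposal has a genuine gap: you have no workable method to compute $c_1(\omega_{\Fix(\tau_\lambda)_+})$.

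Route (i) does not determine anything. You correctly notice that adjunction plus the Lagrangian isomorphism $N_{Z/X}\cong\Omega^1_Z$ is tautological; the same is true of the eigenbundle splitting $T_X|_Z\cong T_Z\oplus\Omega^1_Z$: taking $c_1$ gives $0=c_1(T_Z)+c_1(\Omega^1_Z)=0$, with no information about $c_1(\omega_Z)$. Your suggestion to bring in $c_2(T_X)|_Z$ is not a plan: there is no mechanism by which $c_2(X)$ and the Beauville--Bogomolov relation on $X$ produce a formula for $c_1(\omega_Z)$ in terms of $\lambda|_Z$.

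Route (ii) and the fallback both presuppose an explicit identification of $Z=\Fix(\tau_\lambda)_+$ whose canonical class you can compute. But no such identification is available in the references you cite: the only thing known is that a general point of $Z$ corresponds to a twisted cubic on a $4$-nodal cubic surface. Lehn's question, which this theorem answers, is precisely about the global geometry of $Z$; you cannot assume it as input. Likewise, the claim that $\NS(Z)_\QQ$ is spanned by $\lambda|_Z$ is neither proved nor needed (the statement is only an equality in $\NS(Z)_\QQ$).

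The paper's proof avoids this obstacle by \emph{not} working on a smooth LLSvS $8$-fold at all. It degenerates $(X,\lambda,\tau_\lambda)$ to a singular Donaldson--Uhlenbeck--Yau moduli space $\overline{M}_{S,4}$ for a genus-$2$ K3 surface $S$, where the positive fixed component can be identified explicitly via a Mukai flop to $S^{[4]}$: it is the image of (a blow-up of) $\mathsf{\Gamma}^{(4)}$, with $\mathsf{\Gamma}$ the branch sextic. One computes the canonical class there by elementary divisor calculations (Lemma~\ref{lem:CanonicalDivisors}, Proposition~\ref{prop:CanonicalBundleFixedLocus}), checks the central fiber is reduced and Gorenstein, and then propagates the equality $\omega\cong L^{\otimes 3}$ across the family via a relative Picard scheme argument as in Section~\ref{sec:MainResult}. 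The key idea you are missing is this degeneration to a model where the positive component becomes concretely computable.
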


Let $(X,\lambda)$ be as in Theorem~\ref{thm:main2}. 
Then there exists a smooth cubic fourfold $W\subset\PP^5$  containing no planes such that $X$ is isomorphic to the LLSvS $8$-fold  associated to $W$, and the isomorphism may be chosen so that  $\lambda$ is identified with the polarization described in Example \ref{ex:LLSvS}.  
The generic element in the positive component $\Fix(\tau_{\lambda})_+$ is the equivalence class of twisted cubics whose associated cubic surface has four $A_1$ singularities (see, e.g.,~\cite{Lehn:Oberwolfach}).
Manfred Lehn asked about the global geometry of this component: Theorem~\ref{thm:main2} gives a first answer to his question.
In Section~\ref{sec:poscomp}, we discuss the nature of the  fixed locus (=positive fixed component) when $q_X(\lambda)=2$ and $\div(\lambda)=1$. 
As discussed in Example~\ref{ex:divuno}, it is of general type  
if $n=1$ or $n=2$. 
We show for all $n$ that the fixed locus is of general type provided the linear system $|\lambda|$ behaves well in a neighborhood of the fixed locus.

\subsection{Ideas from the proofs}\label{subsec:IntroIdeaProof}

As in our previous work~\cite{involutions1}, the basic idea for both Theorems \ref{thm:main1} and \ref{thm:main2} is to look at singular {\rm HK} varieties with an involution which are specializations of {\rm HK} manifolds of {\rm K3}$^{[n]}$-type with an involution induced by a polarization $\lambda$ with $q_X(\lambda)=2$ and $\divisore(\lambda)=2$. 
In~\cite{involutions1} we studied  specializations given by the images of divisorial contractions of  HK manifolds with a (rational) Lagrangian fibration. 
In the present paper we study  specializations with worse singularities, given by  the images of divisorial contractions of moduli spaces of stable rank $2$ sheaves on a polarized K3 surface $S$ of degree~2. 
More specifically, we consider (singular) Donaldson--Uhlenbeck--Yau compactifications of moduli spaces of slope-stable rank-2 sheaves on $S$. Let $f\colon S\to \PP^2$ be \lq\lq the\rq\rq\ double cover associated to the polarization of degree $2$ on $S$. 
The involution of the Donaldson--Uhlenbeck--Yau moduli  space $\ov{M}_{S,n}$ to which $\tau_\lambda$ specializes is induced by the covering involution of the double cover $f$. 
Moreover, one of the (several) components of the fixed locus of this involution of  $\ov{M}_{S,n}$  is identified, set theoretically,  with a Donaldson--Uhlenbeck--Yau moduli space $\ov{M}_{\PP^2,n}$ (see Section~\ref{subsec:DUYP2} for notation) of slope-stable rank-2 sheaves on the projective plane (see Theorem~\ref{thm:embedding}). 
More precisely, the pull-back $f^{*}$ defines a regular bijective map from the normal variety $\ov{M}_{\PP^2,n}$ to a component of the fixed locus.

The reason for using a different specialization in the present paper compared to that used in~\cite{involutions1} is that, while the fixed locus of the involution on $\ov{M}_{S,n}$ becomes more complicated, the only relevant component for the proof of Theorem~\ref{thm:main1} is the one corresponding to  $\ov{M}_{\PP^2,n}$ which, as is well-known, is a Fano variety of index $3$.
The hard and technical part of the proof is to deal with the singularities of the Donaldson--Uhlenbeck--Yau moduli space, in particular proving that the component of the fixed locus mentioned above is normal, and hence isomorphic to $\ov{M}_{\PP^2,n}$. 
We first need a local description of the moduli space: this is done by proving that, in our case, the Donaldson--Uhlenbeck--Yau moduli space is \emph{isomorphic} to a moduli space of Bridgeland stable objects (\cite{Bridgeland:Stab,Tajakka:Uhlenbeck}).
We finish the proof of the normality result by using this description, together with some fundamental invariant theory (\cite{procesi:invariants}). 
This is all contained in Section~\ref{sec:SpecialFiber}.

Next we let $\cX\to\mathbb{D}$ be a family of polarized varieties over a pointed smooth curve with the following properties: the fibers $\cX_t$ for $t\not=0$ are {\rm HK} manifolds of  {\rm K3}$^{[n]}$-type, the restriction to $\cX_t$ of the polarization of $\cX$ is a polarization $\lambda_t$ of square $2$ and divisibility $2$, and the fiber $\cX_0$ is isomorhic to the Donaldson--Uhlenbeck--Yau moduli space $\ov{M}_{S,n}$. 
Moreover we require that there exists a  fiberwise involution of $\cX$ which when restricted to $\cX_t$ for $t\not=0$ is equal to $\tau_{\lambda_t}$, and when restricted to $\cX_0=\ov{M}_{S,n}$ is equal to the involution described above. 
We let $\cY_{\pm}\subset\cX$ be the closure of the variety swept out by $\Fix(\tau_{\lambda_t})_{\pm}$ for $t\in(\mathbb{D}\setminus\{0\})$.
Then $\cY_{\pm}$ is  integral and $\cY_{\pm}\to\mathbb{D}$ is flat. 
The main result of Section~\ref{sec:linearization} is that the (scheme theoretic) fiber over $0$ of $\cY_{-}\to\mathbb{D}$ is equal to $\ov{M}_{\PP^2,n}$ (embedded in $\cX_0=\ov{M}_{S,n}$ via the pullback $f^*$ as discussed above).

In Section~\ref{sec:MainResult} we prove Theorem~\ref{thm:main1} by considering the family $\cY_{-}\to\mathbb{D}$ defined above. 
We show that $\cY_{-}$ is normal, Gorenstein, and that  the fiber over $0$, i.e., $\ov{M}_{\PP^2,n}$, has dualizing line bundle isomorphic to the restriction of $(\cL^{\vee})^{\otimes  3}$, where $\cL$ is the relative ample line bundle on $\cX\to\mathbb{D}$.
Since the restriction of $\cL$ to $\ov{M}_{\PP^2,n}$ is a generator of the Picard group, this completes the proof of Theorem~\ref{thm:main1}. 

The proof of Theorem~\ref{thm:main2} is similar, with $\cY_{+}\to\mathbb{D}$ replacing $\cY_{-}\to\mathbb{D}$. 
We restrict to $n=4$ because  in this case $\cX_0\cong \ov{M}_{S,4}$ is birational to $S^{[4]}$ and we can explicitly describe all components of the fixed locus of the involution by relating them to the fixed components of the corresponding involution of $S^{[4]}$.

\subsection*{Acknowledgements}
The paper benefited from many useful discussions with the following colleagues who we gratefully acknowledge: Enrico Arbarello, Alessio Bottini, Olivier Debarre, Tommaso de Fernex, Enrico Fatighenti, Franco Giovenzana, Luca Giovenzana, Daniel Huybrechts, Alexander Kuznetsov, Manfred Lehn, Marco Manetti, Francesco Meazzini, Giovanni Mongardi, Alexander Perry, Laura Pertusi, Christoph Sorger, Paolo Stellari, Claire Voisin, Chenyang Xu, Xiaolei Zhao.
We also thank the referees for the detailed reports, which improved the exposition of the paper and fixed several inaccuracies.

%%%%%%%%%%%%%%%%%%%%%

\section{The special fiber}\label{sec:SpecialFiber}

In this section we study the Donaldson--Uhlenbeck--Yau compactification for rank-2 torsion-free sheaves, both in the case of a very general K3 surface $S$ of genus~2 and in the case of the projective plane. The main result (Theorem~\ref{thm:embedding}) is that the pull-back morphism induces a closed embedding of the appropriate moduli spaces. To prove this we give a moduli-theoretic description of the two moduli spaces in terms of Bridgeland moduli spaces of semistable complexes (respectively, in Section~\ref{subsec:DUYK3} and Section~\ref{subsec:DUYP2}); this allows us to give a local analytic description of the singularities of these moduli spaces.

\subsection{The Donaldson--Uhlenbeck--Yau compactification for K3 surfaces}\label{subsec:DUYK3}

Let $(S,h)$ be a polarized K3 surface such that $\NS(S)=\ZZ h$, $h^2=2d$. Let $v=(r,l,s)\in H^0(S;\ZZ)\oplus\NS(S)\oplus H^4(S;\ZZ)$ be a primitive Mukai vector with $r\ge 0$, and let $M(v)$ be the moduli space of semistable sheaves on the polarized $K3$ surface $(S,h)$ with Mukai vector $v$. 
Then the variety $M(v)$ is a projective HK manifold of $\mathrm{K3}^{[m]}$-type, where $m=\frac{v^2}{2}+1$. 
If $m\geq2$, then the Mukai map (see, e.g.,~\cite[Main Theorem]{Kieran:weight2} or~\cite[Eqn.~(1.6)]{Yos:moduli})
\[
\theta_v\colon v^\perp \xlongrightarrow{\cong} H^2(M(v),\ZZ)
\]
gives an isometry of weight-2 Hodge structures; here $v^\perp\subset H^*(S,\ZZ)$ has the induced weight-2 Hodge structure induced by the Mukai Hodge structure on $H^*(S,\ZZ)$. The map $\theta$ is determined up to sign. We decide to choose the opposite of the convention adopted in \emph{loc.~cit.}
More precisely, we let $p\colon S\times M(v)\lra S$ and $q\colon S\times M(v)\lra M(v)$ be the projections. Let $\cF$ be a universal sheaf on $S\times M(v)$ or, if it does not exist, a  quasi-universal sheaf of similarity $\sigma\in\NN_{+}$, i.e., such that for all $[\cE]\in M(v)$ the restriction of $\cF$ to $S\times\{[\cE]\}$ is isomorphic to $\cE^{\oplus\sigma}$. By~\cite[Thm~A.5]{Mukai:Tata}
a quasi-universal sheaf exists. We let
\begin{equation}\label{mappatheta}
\begin{matrix}
v^{\bot} & \overset{\theta_v}{\lra} & H^2(M(v)) \\
\alpha & \longmapsto & q_{*}\left(\frac{1}{\sigma}\ch(\cF)\cdot p^{*}(\alpha^{\vee}\cdot\sqrt{\td_S})\right)_6,
\end{matrix}
\end{equation}
where, if $\alpha^{2k}\in H^{2k}(S)$ is the degree-$2k$ component of $\alpha$, we let
$\alpha^{\vee}\coloneqq \alpha^0-\alpha^2+\alpha^4$, and the subscript $6$ means that we consider the component in $H^6(S\times M(v))$. 

\begin{rema}\label{rema:dondivisors}
Let $v=(r,l,s)$. 
If $\beta\in H^2(S)$ then 
$(0,\beta,(l\cdot \beta)/r)\in v^{\bot}$, and we have
\begin{equation*}
\mu_v(\beta)=\theta_v(0,\beta,(l\cdot \beta)/r)
\end{equation*}
where $\mu_v$ is Donaldson's map (see~\cite[Sect.~3]{Kieran:weight2}). Moreover, if $\cL_1$ is the determinantal line bundle on $M(v)$ defined in~\cite{LePotier:DetLineBundle} (see also~\cite[Sect.~8.1]{HL:moduli}), we have
\begin{equation*}
c_1(\cL_1)=\theta_v(0,rh,(l\cdot h)).
\end{equation*}
\end{rema}

Now let $n\geq2$ be an integer such that $d-n+1\equiv 0 \pmod 2$.
We consider the primitive Mukai vector
\begin{equation*}
v_n:=\left(2,-h,\frac{d-n+1}{2}\right)\in H^{*}_\mathrm{alg}(S,\ZZ).
\end{equation*}
and the moduli space $M_{S,n}:=M(v_n)$ of dimension~$2n$.
Let us consider the algebraic Mukai vectors $(0,h,-d),(2,-h,\frac{d+n-1}{2})\in v_n^\perp$ and let
\begin{equation}\label{lamdel}
\lambda_S := \theta\bigg(0,h,-d\bigg) \qquad 
\delta:=\theta\left(2,-h,\frac{d+n-1}{2}\right)
\end{equation}
in $\mathrm{NS}(M_{S,n})$.
We have the following well-known result.

\begin{lemm}\label{lem:DUYdivisor}
In the above notation, we have:
\begin{enumerate}[{\rm(a)}]
\item\label{enum:DUYdivisor1} $\lambda_S\in\NS(M_{S,n})$ is a semiample divisor class with $q(\lambda_S)=2d$ which induces a divisorial contraction
\[
\phi\colon M_{S,n} \lra \overline{M}_{S,n},
\]
where $\overline{M}_{S,n}$ is the Donaldson--Uhlenbeck--Yau compactification of the moduli space of slope stable locally free sheaves:
\[
\overline{M}_{S,n}:=\Proj\left(\bigoplus_{k=0}^{\infty} H^0(M_{S,n},\cO_{M_{S,n}}(\lambda_S)^{\otimes k})\right).
\]
The variety $\overline{M}_{S,n}$ is normal, $\QQ$-factorial, with canonical Gorenstein singularities and trivial canonical bundle.
\item\label{enum:DUYdivisor2} $\delta\in\NS(M_{S,n})$ is the class of the irreducible divisor
\[
\Delta:=\left\{[\cF]\in M_{S,n} \,:\, \text{$\cF$ is not locally free}\right\}
\]
which is the exceptional locus of the contraction $\phi$.
\item\label{enum:DUYdivisor3} $\Delta$ has a stratification by locally-closed subsets
\[
\{ 0 \} \subset \Delta_{n,\lfloor n/2\rfloor} \subset \ldots \subset \Delta_{n,1} = \Delta, 
\]
indexed by the length of the quotients $\cF^{\vee\vee}/\cF$.
\end{enumerate}
\end{lemm}

\begin{proof}
To show~\ref{enum:DUYdivisor1}, by Remark~\ref{rema:dondivisors} we have $c_1(\cL_1)=2\lambda_S$, where $\cL_1$ is as in the remark. 
Let $\cD$ be the determinant line bundle on $M_{S,n}$ defined in~\cite{Li:uhlencomp} (and denoted by $\cL_{\cM,1}$ in \emph{loc.~cit.}).
Then $\cL_1\cong \cD^{\otimes 2}$, see~\cite[Lemma~8.3.4]{HL:moduli}. Hence the first sentence of~\ref{enum:DUYdivisor1} follows from the main result of~\cite{Li:uhlencomp}. 
In order to prove  the second sentence, we observe that by its definition, $\overline{M}_{S,n}$ is normal.
Since the contraction $\varphi$ is divisorial of relative Picard number~1, by using the restriction sequence for Weil divisors on the smooth locus, we deduce that the base is $\Q$-factorial.
Moreover, it has symplectic singularities, which are thus canonical.

Finally,~\ref{enum:DUYdivisor2} Follows from a straightforward computation and~\ref{enum:DUYdivisor3} is clear.
\end{proof}

Explicitly (see~\cite{Li:uhlencomp}), as a set the (closed) points of $\overline{M}_{S,n}$ are pairs $([\cE],Z)$ where 
$\cE$ is an $h$-slope stable vector bundle with Mukai vector $v(\cE)=v_n+\ell\cdot(0,0,1)$ for some $0\le \ell\le \lfloor n/2\rfloor$, and $[Z]\in S^{(\ell)}$. 
The contraction map $\phi\colon M_{S,n}\to \overline{M}_{S,n}$ is described as follows.
Let $[\cF]\in M_{S,n}$, and let
\[
0\lra \cF\lra \cF^{\vee\vee}\lra \cQ\lra 0
\]
be the natural exact sequence, where $\cF^{\vee\vee}$ is the double dual of $\cF$ (an $h$-slope stable locally free sheaf).
The sheaf $\cQ$ is Artinian, let $\ell$ be its  length.
Then $[\cF]\in\Delta_{\ell}$ and
\[
\varphi([\cF])=\left([\cF^{\vee\vee}],\sum_{p}\ell(\cQ_p)p\right).
\]

\begin{rema}\label{rmk:divisibility}
The divisibility $\div(\lambda_S)$ of $\lambda_S$ is either~$1$ or~$2$, and it equals $2$ if and only if $d+n-1\equiv 0 \pmod 4$.
Indeed this follows from the fact that a vector $(r,\alpha,s)\in H^*(S,\Z)$ belongs to $v_n^\perp$ if and only if $(\alpha,h)=-r\frac{d-n+1}{2}-2s$.
\end{rema}

We want to understand the local structure of $\overline{M}_{S,n}$ at a singular point, namely when $\ell\neq0$ in the above description.
To this end we want to reinterpret $\overline{M}_{S,n}$ as a moduli space of Bridgeland semistable objects in the bounded derived category of coherent sheaves $\Db(S)$.

For $\alpha,\beta\in\RR$, $\alpha>0$, we consider Bridgeland stability conditions on $\Db(S)$ of the form $\sigma_{\alpha,\beta}=(Z_{\alpha,\beta},\coh^\beta(S))$ (see, e.g., \cite[Example 3.7]{involutions1}). We further consider the moduli space $\overline{M}_{S,n}':=M_{S,\sigma_{\alpha,-1/2}}(-v_n)$ of Bridgeland semistable objects with phase~1 and Mukai vector $-v_n$ with respect to a stability condition $\sigma_{\alpha,-1/2}$, for an arbitrary $\alpha\gg 0$.
This moduli space has been constructed in general as a good moduli space in the sense of Alper (see~\cite{Alper:GoodModuli,AHLH:Moduli} and \cite[Theorem 21.24]{BLMNPS:family}); the special case for our vector $v_n$ and $\beta=-1/2$ has been studied in~\cite{Tajakka:Uhlenbeck}.
In particular, by \cite[Theorem 1.1]{Tajakka:Uhlenbeck}, $\overline{M}_{S,n}'$ is a projective variety of dimension~$2n$ and we have a natural proper birational bijective morphism
\[
g_n\colon \overline{M}_{S,n} \longrightarrow \overline{M}_{S,n}',
\]
defined on objects by
\begin{equation}\label{eq:GiesekerToDUY}
g_n\left([\cE],\sum_p \ell_p\, p\right):=\left[\cE[1]\oplus \bigoplus_p\, k(p)^{\oplus \ell_p}\right],
\end{equation}
where $k(p)$ denotes the skyscraper sheaf at the point $p$ (we will adhere to this convention throughout). 

\begin{prop}\label{prop:UhlenbeckVsBridgeland}
In the above notation, the bijective morphism $g_n$ is an isomorphism.
\end{prop}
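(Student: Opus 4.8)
The plan is to show that the proper birational bijective morphism $g_n\colon\overline{M}_{S,n}\to\overline{M}_{S,n}'$ is an isomorphism by proving it is finite and that the target is normal, then invoking Zariski's main theorem. Normality of $\overline{M}_{S,n}$ is already recorded in Lemma~\ref{lem:DUYdivisor}\ref{enum:DUYdivisor1}; the issue is to transfer this, or rather to argue directly on $\overline{M}_{S,n}'$. Since $g_n$ is proper with finite (indeed, single-point) fibers, it is finite; a finite birational morphism onto a normal variety is an isomorphism. So it suffices to show $\overline{M}_{S,n}'$ is normal. One clean route: the good moduli space $\overline{M}_{S,n}'$ in the sense of Alper is, \'etale-locally near a point $[E]$ with polystable representative $E=\cE[1]\oplus\bigoplus_p k(p)^{\oplus\ell_p}$, isomorphic to the GIT quotient $\Ext^1(E,E)/\!\!/\Aut(E)$ of the quadratic cone cut out by the moment map (the Kuranishi description for moduli of Bridgeland-stable objects on a K3; cf.\ the deformation theory used in \cite{BLMNPS:family} and the étale slice theorem for good moduli spaces \cite{AHLH:Moduli}). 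One must then check that this local model is normal, which follows because the obstruction map is given by the Yoneda/cup product and the singularities of the moduli space of objects with this Mukai vector are symplectic (the Mukai pairing being nondegenerate on $v_n^\perp$), hence normal; alternatively, on the stack side $\mathcal{M}_{S,\sigma}(v_n)$ is smooth (unobstructed, as $S$ is a K3 and $v_n$ is primitive up to the length strata) so the GIT quotient of a smooth stack by a reductive group is normal.

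Perhaps more in the spirit of this paper: compare the two spaces directly via their modular descriptions. The map $g_n$ of \eqref{eq:GiesekerToDUY} sends $([\cE],\sum_p\ell_p\,p)$ to the polystable object $\cE[1]\oplus\bigoplus_p k(p)^{\oplus\ell_p}$. By \cite[Theorem 1.1]{Tajakka:Uhlenbeck} every $\sigma_{\alpha,-1/2}$-semistable object of phase~$1$ and Mukai vector $v_n$ is $S$-equivalent to exactly such a polystable object, so $g_n$ is a bijection on closed points (this is the content already quoted). To upgrade to an isomorphism, produce an inverse morphism: construct a family over $\overline{M}_{S,n}'$ (after passing to the good moduli space, using the universal family on an atlas and descent) whose associated classifying map to $\overline{M}_{S,n}$ inverts $g_n$. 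The existence of such descent is exactly where normality of $\overline{M}_{S,n}'$, or the local structure theorem, is used; without it one only gets a bijective morphism.

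The main obstacle is the local analysis at the deep strata of $\Delta_n$, i.e.\ at points with $\ell=\lfloor n/2\rfloor$ and with several of the $\ell_p$ equal or with $p$ repeated, where $\Aut(E)$ is a product of general linear groups of rank~$>1$ and the GIT quotient $\Ext^1(E,E)/\!\!/\Aut(E)$ is genuinely singular. One must verify that the formal/\'etale-local model of $\overline{M}_{S,n}'$ really is this GIT quotient — i.e.\ that the Kuranishi space is the full quadratic cone with no higher-order corrections, which holds because the Maurer--Cartan equation for objects on a K3 with this decomposition truncates at order~$2$ (formality of the dg-algebra $\RHom(E,E)$, a consequence of the K3 condition) — and that this quotient is normal and matches the already-known local model of $\overline{M}_{S,n}$ on the Gieseker side. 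Granting the étale slice theorem for good moduli spaces and formality on the K3, normality of $\overline{M}_{S,n}'$ follows, and then $g_n$, being a proper bijective birational morphism onto a normal variety, is an isomorphism.
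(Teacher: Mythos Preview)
Your overall strategy matches the paper's: reduce to showing $\overline{M}_{S,n}'$ is normal and invoke Zariski's main theorem, using the formality of $\mathrm{RHom}(E,E)$ (this is Lemma~\ref{lem:KuranishiK3}) to identify the local model with the GIT quotient $\mu_{S,F}^{-1}(0)\sslash G_{S,F}$ cut out by the quadratic moment map. Up to that point you are on track.

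The genuine gap is the step where you conclude that this GIT quotient is normal. Both justifications you offer fail. First, ``the singularities are symplectic, hence normal'' is circular: normality is part of the \emph{definition} of symplectic singularities, so you cannot invoke it before you have established normality of $\overline{M}_{S,n}'$. Second, the moduli stack $\mathcal{M}_{S,\sigma}(v_n)$ is \emph{not} smooth at strictly polystable points: for $F=\cE[1]\oplus\bigoplus_j k(p_j)\otimes W_j$ the traceless $\Ext^2_S(F,F)_0$ is nonzero (it contains $\Ext^2_S(k(p_j),k(p_j))\otimes\End(W_j)$), the moment map is genuinely nontrivial, and $\mu_{S,F}^{-1}(0)$ is a singular affine variety. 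So you cannot appeal to ``GIT quotient of something smooth by a reductive group is normal''.

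What the paper actually does to close this gap is to recognise $\mu_{S,F}^{-1}(0)\sslash G_{S,F}$ as a Marsden--Weinstein reduction for the double of an explicit quiver $Q$ with dimension vector $\alpha=(1,a_1,\dots,a_m)$, and then invoke Crawley-Boevey's results: \cite[Theorem~1.1]{CB:normality} gives normality of the reduced quotient, and \cite[Theorem~1.2(2)]{CB:geometry} gives that $\mu_{S,F}^{-1}(0)$ is integral (a reduced irreducible complete intersection), after checking the combinatorial inequality $p(\alpha)>\sum_l p(\beta^{(l)})$ for every nontrivial decomposition $\alpha=\sum_l\beta^{(l)}$. This quiver-variety input is the missing ingredient in your proposal; without it, neither normality nor even reducedness of the local model is established.
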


Before proving Proposition \ref{prop:UhlenbeckVsBridgeland}, we remark that by Lemma~\ref{lem:DUYdivisor}\ref{enum:DUYdivisor1}, we know that $\overline{M}_{S,n}$ is normal. 
Hence, since $g_n$ is proper, birational and bijective, in order to prove Proposition~\ref{prop:UhlenbeckVsBridgeland} we only need to show that $\overline{M}_{S,n}'$ is normal too.
To this end, before presenting the proof, we recall explicitly the local structure of $\overline{M}_{S,n}'$. This will also be used in our analysis later.

Let $F:=\cE[1]\oplus \bigoplus_{j=1}^m\, k(p_j)\otimes W_j$ be a polystable element in $\overline{M}_{S,n}'$, where $p_1,\dots,p_m\in S$ are distinct closed points and $W_j$ are vector spaces of dimension $a_j>0$, with $\ell:=\sum_j a_j$.
We let $V_{S,F}$ be the vector space
\[
V_{S,F}:=\Ext^1_S(F,F).
\]
Explicitly, 
\begin{equation*}\label{eq:SplittingK3Antony}
V_{S,F}=\Ext^1_S(\cE,\cE)\oplus V_{S,F}'    
\end{equation*}
where
\begin{equation}\label{eqroma24}
\begin{split}
V_{S,F}':=\bigoplus_{j=1}^m\Big( (\Hom_S(\cE,k(p_j))\otimes W_j) &\oplus (\Ext^2_S(k(p_j),\cE)\otimes W_j^\vee)\\
&\oplus (\Ext^1_S(k(p_j),k(p_j))\otimes \End(W_j)) \Big).
\end{split}
\end{equation}
The group
\begin{equation}\label{eq:group}
G_{S,F}:=\left(\CC^*\times \prod_{j=1}^m\mathrm{GL}(W_j)\right)\bigg/\CC^*
\end{equation}
acts on $V_{S,F}$ by conjugation on the direct sum.
Finally, consider the moment map for this action
\[
\mu_{S,F}\colon V_{S,F} \longrightarrow \Ext^2_S(F,F)_0,
\]
which is given by the Yoneda product, where $\Ext^2_S(F,F)_0$ denotes the traceless part of $\Ext^2_S(F,F)$.

The deformation theory of complexes behaves exactly as for sheaves (see~\cite[Theorem 3.1.1]{Lieblich:moduli}): hence, the base space of the formal semiuniversal deformation of $F$ is the scheme-theoretic fibre of the Kuranishi map
\[
\kappa_{S,F} \colon \widehat{V}_{S,F} \longrightarrow \Ext^2_S(F,F)_0,
\]
where $\widehat{V}_{S,F}$ denotes the formal completion of $V_{S,F}$ at $0$ (see, e.g., \cite[Section 3]{AS:singularities}).
As observed in~\cite[Section 4]{AS:singularities}, the Kuranishi map can be chosen to be $G_{S,F}$-equivariant.
Moreover, the quadratic term of the Kuranishi map is exactly the map $\mu_{S,F}$.

\begin{lemm}\label{lem:KuranishiK3}
We can choose a Kuranishi map so that the equality $\kappa_{S,F}=\mu_{S,F}$ holds. In particular, we have an isomorphism of analytic germs
\[
\left(\overline{M}_{S,n}',[F]\right)\cong \Big( \mu_{S,F}^{-1}(0)\sslash G_{S,F},[0]\Big).
\]
\end{lemm}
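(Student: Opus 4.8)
The plan is to reduce the statement to the formality of the differential graded algebra computing the deformation theory of the polystable object $F$, exactly as in the classical case of sheaves on a K3 surface treated in \cite{AS:singularities}. First I would recall that the formal semiuniversal deformation space of $F$ is governed by the dg-Lie (or $L_\infty$) algebra $\mathrm{RHom}_S(F,F)$, whose cohomology in degrees $0,1,2$ is $\Hom_S(F,F)$, $\Ext^1_S(F,F)=V_{S,F}$ and $\Ext^2_S(F,F)$. Since $S$ is a K3 surface, Serre duality gives $\Ext^2_S(F,F)\cong\Hom_S(F,F)^\vee$, and the trace splits off a one-dimensional summand, so the relevant obstruction space is the traceless part $\Ext^2_S(F,F)_0$. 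The Kuranishi map $\kappa_{S,F}\colon\widehat V_{S,F}\to\Ext^2_S(F,F)_0$ is then the usual Maurer--Cartan / obstruction map, and by construction its leading (quadratic) term is the Yoneda square, which is precisely the moment map $\mu_{S,F}$ up to the identification $\Ext^2_S(F,F)_0\cong(\Ext^2_S(F,F)_0)$ induced by the symmetric pairing.

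The heart of the argument is to upgrade ``the quadratic term of $\kappa_{S,F}$ equals $\mu_{S,F}$'' to ``$\kappa_{S,F}$ can be chosen equal to $\mu_{S,F}$'' — i.e. that there are no higher-order corrections after a suitable formal change of coordinates. For this I would invoke formality: the dg-algebra $\mathrm{RHom}_S(F,F)$ is formal. In the sheaf case this is the Kaledin--Lehn / Budur--Zhang type statement used in \cite{AS:singularities}; here $F=\cE[1]\oplus\bigoplus_j k(p_j)\otimes W_j$ is a polystable complex, but it is still a polystable object with respect to a Bridgeland stability condition on a K3, and the relevant Ext-algebra is a direct sum indexed by the summands (the decomposition \eqref{eq:SplittingK3Antony}--\eqref{eqroma24}), each factor being an Ext-algebra of a spherical-type or skyscraper object on a K3 surface. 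Each such factor is formal by the standard argument (the object is polystable, so its Ext-algebra carries a $\CC^*$-action or is the cohomology of a compact Kähler manifold / supports a Frobenius-type structure), and formality passes to finite direct sums; moreover one can choose the formality quasi-isomorphism $G_{S,F}$-equivariantly since $G_{S,F}$ acts by automorphisms of $F$. Once the dg-algebra is formal and equivariantly so, the Kuranishi map is equivalent (via a $G_{S,F}$-equivariant formal automorphism of $\widehat V_{S,F}$) to its own quadratic part, which is $\mu_{S,F}$. This is the step I expect to be the main obstacle: one must make sure the formality is compatible with the $G_{S,F}$-action and with the splitting \eqref{eq:SplittingK3Antony}, so that the resulting identification $\kappa_{S,F}=\mu_{S,F}$ is genuinely $G_{S,F}$-equivariant and not merely an abstract isomorphism of germs.

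Finally, having $\kappa_{S,F}=\mu_{S,F}$ as $G_{S,F}$-equivariant maps, the local structure of the moduli space follows by Luna's étale slice theorem in its analytic (or formal) form: the good moduli space $\overline M'_{S,n}$ near $[F]$ is the GIT/affine quotient of the slice, which is the scheme-theoretic fibre $\mu_{S,F}^{-1}(0)\subset\widehat V_{S,F}$, by the stabilizer $G_{S,F}$. This yields the asserted isomorphism of analytic germs
\[
\left(\overline M'_{S,n},[F]\right)\cong\Bigl(\mu_{S,F}^{-1}(0)\sslash G_{S,F},0\Bigr),
\]
completing the proof. I would also remark that the same formalism, applied with $S$ replaced by $\PP^2$ and Gieseker/slope stability in place of Bridgeland stability, will give the analogous local model used in Section~\ref{subsec:DUYP2}.
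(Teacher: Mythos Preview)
Your overall strategy is exactly the one in the paper: establish formality of the DG-Lie algebra $\mathrm{RHom}_S(F,F)$ (equivariantly for $G_{S,F}$), conclude that the Kuranishi map can be taken equal to its quadratic part $\mu_{S,F}$, and then pass to the GIT quotient to obtain the local model for the good moduli space.

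However, there is a genuine gap in your justification of formality. You write that the Ext-algebra ``is a direct sum indexed by the summands \ldots\ each factor being an Ext-algebra of a spherical-type or skyscraper object'' and that ``formality passes to finite direct sums.'' This is not how it works: for $F=F_1\oplus F_2$ the DG-algebra $\mathrm{RHom}(F,F)$ is \emph{not} the direct sum of $\mathrm{RHom}(F_1,F_1)$ and $\mathrm{RHom}(F_2,F_2)$ as DG-algebras; the cross-terms $\mathrm{RHom}(F_i,F_j)$ for $i\neq j$ are precisely the pieces appearing in~\eqref{eqroma24}, and the multiplicative structure (Yoneda product) mixes them nontrivially with the diagonal pieces. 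Formality of the endomorphism algebras of the individual summands of $F$ does not, by itself, imply formality of the full endomorphism DG-algebra of their direct sum. So the sentence ``formality passes to finite direct sums'' is not a valid step here.

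The paper closes this gap by following \cite{PZ:FormalityBridgeland} and the technique of \cite{BMM:Formality}: one first builds an $\mathrm{Aut}(F)$-equivariant locally free resolution of $F$ of length~$2$ (using the splitting $\mathrm{Aut}(F)=\mathrm{Aut}(\cE[1])\times\mathrm{Aut}(\cF')$ to take the direct sum of $\cE[1]$ with an equivariant resolution of the skyscraper part $\cF'$), then forms the associated Dolbeault DG-Lie algebra $K=\bigoplus A^{0,q}(\mathcal{H}om(\cG^r,\cG^s))$, and checks that the arguments of \cite[Section~3]{BMM:Quadraticity} and \cite[Theorem~5.1]{BMM:Formality} go through verbatim. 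This yields formality of the full $\mathrm{RHom}_S(F,F)$, $\mathrm{Aut}(F)$-equivariantly, in one stroke---without any attempt to assemble it from the summands. Once you have that, your final paragraph (Luna slice, GIT quotient of $\kappa_{S,F}^{-1}(0)$ by the stabilizer) is correct and is exactly what the paper's ``In particular'' encodes.
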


Lemma~\ref{lem:KuranishiK3} is proved in~\cite[Section~3.2]{PZ:FormalityBridgeland} in general, for all moduli spaces of Bridgeland semistable objects on the derived category of a K3 surface, by using the approach in~\cite{BMM:Formality} (the case of a generic stability condition, which is not sufficient for our purposes, was proven earlier in~\cite{BZ:Kuranishi}).
We give here a quick sketch of the proof, by following~\cite{PZ:FormalityBridgeland}. For a complete argument see also~\cite[Theorem 3.2]{AS:Formality}.

\begin{proof}[Proof of Lemma~\ref{lem:KuranishiK3}.]
Let us denote by $\cF'$ the coherent sheaf $\oplus_{j=1}^m\, k(p_j)\otimes W_j$ and let us consider the automorphism group $\mathrm{Aut}(F)$.
This splits as
\begin{equation}\label{eq:SplittingAutGroup}
\mathrm{Aut}(F) = \mathrm{Aut}(\cE[1]) \times \mathrm{Aut}(\cF').
\end{equation}
First of all, by using the above splitting~\eqref{eq:SplittingAutGroup}, we observe that we can construct an $\mathrm{Aut}(F)$-equivariant locally free resolution
\[
\cG^\bullet=\{0\to \cG^{-2} \to \cG^{-1} \to \cG^0 \to 0 \}
\]
of $F$ of length~2, by taking the direct sum of $\cE[1]$ with an $\mathrm{Aut}(\cF')$-equivariant locally free resolution of $\cF'$.

The next step is to consider, as in~\cite[Section 5]{BMM:Formality}, the Dolbeault DG-Lie algebra presentation of $\mathrm{RHom}_S(F,F)$ given by
\[
K:=\bigoplus_{q,r,s} A^{0,q}\big( \mathcal{H}om_S(\cG^r,\cG^s) \big),
\]
where $A^{0,q}(\mathcal{H}om(\cG^r,\cG^s))$ is the space of $\mathcal{H}om_S(\cG^r,\cG^s)$-valued global $(0,q)$-forms.
The results in~\cite[Section 3]{BMM:Quadraticity} hold without changes in our context, given the splitting~\eqref{eq:SplittingAutGroup}; in particular, we have that $K$ admits an $\mathrm{Aut}(F)$-action, such that each degree of $K$ is a rational representation of $\mathrm{Aut}(F)$. 

Then the proof in~\cite[Theorem 5.1]{BMM:Formality} directly applies, showing that the DG-Lie algebra $\mathrm{RHom}_S(F,F)$ is formal.
The results in~\cite{FIM:dgla} for coherent sheaves extend without changes to complexes. Hence, the deformation theory of $F$ is controlled by the deformation theory of the DG-Lie algebra $\mathrm{RHom}_S(F,F)$; in particular, by formality, it follows that the Kuranishi map can be chosen to be quadratic, as we wanted.
\end{proof}

We can describe $\mu^{-1}_{S,F}(0)\sslash G_{S,F}$ in terms of quivers, as reviewed in~\cite[Section 4]{AS:Formality}.
Indeed, given a quiver $Q$, we can associate a new quiver, called the double quiver associated to $Q$ and denoted by $\overline{Q}$, whose vertex set is the same as that of $Q$ and whose arrow set is obtained from that of $Q$ by adding, for each arrow, a new arrow in the opposite direction.
By fixing a dimension vector, the double quiver then naturally has a moment map and the Marsden--Weinstein reduction (in the sense of \cite[Section 1]{CB:normality}) is nothing but the GIT quotient of the zero-locus of the moment map by the associated group of automorphisms.

In our case, we consider the double quiver $\overline{Q}$ of the following quiver
\[
\begin{tikzcd}
&&& 1 \arrow[out=0,in=90,loop,swap,"1"]\\
Q:&& 0 \arrow[out=-135,in=135,loop,"n-2\ell"]\arrow[ur,"2"]\arrow[dr,swap,"2"]&\vdots\\
&&& m \arrow[out=0,in=-90,loop,"1"]
\end{tikzcd}
\]
with vertex set $\{0,1,\dots,m\}$, two arrows from the vertex~0 to each of the vertices $1,\dots,m$, one loop around each vertex $1,\dots,m$ and $(n-2\ell)$~loops around the vertex~0.
Here we look at the dimension vector
\[
\alpha:=(1,a_1,\dots,a_m).
\]

\begin{proof}[Proof of Proposition~\ref{prop:UhlenbeckVsBridgeland}.]
By~\cite[Theorem 1.1]{CB:normality}, the quotient $\mu^{-1}_{S,F}(0)\sslash G_{S,F}$ with the reduced induced scheme structure is normal.
We only need to show it is reduced.
To this end, it is enough to show that $\mu^{-1}_{S,F}(0)$ is integral. To prove this, we use \cite{CB:geometry}.
We recall some notation from~\emph{loc.~cit.}.
For a dimension vector $\beta=(b_0,b_1,\dots,b_m)$, we set
\[
p(\beta):=1+\sum_{u\in Q} b_{h(u)}b_{t(u)} - \beta^2,
\]
where for an arrow $u\in Q$, we denote by $h(u)$, respectively $t(u)$ the head vertex, respectively the tail vertex and we set
\[
\beta^2=\sum_{i=0}^m b_i^2.
\]

For us, we need to consider two types of vectors:
\begin{equation}\label{eq:CBfunctionp}
\begin{split}
    &p((1,b_1,\dots,b_m))=n-2\ell + 2(b_1+\dots+b_m)\\
    &p((0,b_1,\dots,b_m))=1.
\end{split}
\end{equation}
Then, by~\emph{loc.~cit.}, the fiber $\mu^{-1}_{S,F}(0)$ is a reduced and irreducible complete intersection of dimension
\[
2n+\sum_{i=1}^ma_i^2=\alpha^2-1+2p(\alpha)
\]
if the following holds.
Let us write
\[
\alpha=(1,a_1,\dots,a_m)=\sum_{j=0}^r \beta^{(j)},
\]
where
\[
\beta^{(0)}:=(1,b_1^{(0)},\dots,b_m^{(0)}),\qquad  \beta^{(j)}:=(0,b_1^{(j)},\dots,b_m^{(j)}),\text{ for }j=1,\dots,r,
\]
with $b_i^{(0)},b_i^{(j)}\geq0$, $(b_1^{(j)},\dots,b_m^{(j)})\neq(0,\dots,0)$, for all $i=1,\dots,m$, $j=1,\dots,r$, $r\geq 1$.
By~\cite[Theorem~1.2(2)]{CB:geometry}, we need then to show that
\[
p(\alpha)>\sum_{l=0}^rp(\beta^{(l)}).
\]
To this end, we use~\eqref{eq:CBfunctionp} and $r\geq1$, and we have:
\[
\begin{split}
\sum_{l=0}^rp(\beta^{(l)}) & = (n-2\ell)+2\sum_{i=1}^m b_i^{(0)} + r \\
& < (n-2\ell)+2\sum_{i=1}^m (b_i^{(0)} + r) \\
& \leq (n-2\ell)+2\sum_{i=1}^m \sum_{j=0}^r b_i^{(j)}\\
& = (n-2\ell)+2\sum_{i=1}^m a_i = p(\alpha),
\end{split}
\]
as we wanted.
\end{proof}

\begin{rema}\label{rmk:SplitKuranishi}
We notice that since $\cE$ is simple, the $\CC^*$-factor of the group  $G_{S,F}$ acts trivially on $\Ext^1_S(\cE,\cE)$, and hence the the group  $G_{S,F}$ acts trivially on the factor $\Ext^1_S(\cE,\cE)$ of  $V_{S,F}$.
Moreover, a computation shows that the factor $\Ext^1_S(\cE,\cE)$ of $V_{S,F}$ is contained in the radical of the Yoneda product.
%We notice that neither the Yoneda product nor the group $G_{S,F}$ act on the factor $\Ext^1_S(\cE,\cE)$ of $V_{S,F}$ in~\eqref{eq:SplittingK3Antony}.
Therefore, from Lemma~\ref{lem:KuranishiK3} we deduce a splitting
\begin{equation*}\label{eq:SplittingK3}
\left(\overline{M}_{S,n}',[F]\right)\cong \Big(\Ext^1_S(\cE,\cE)\times   \big((\mu_{S,F}')^{-1}(0)\sslash G_{S,F}\big),[0]\Big),
\end{equation*}
where
\[
\mu_{S,F}'\colon V_{S,F}'\to \bigoplus_{j=1}^m \Big( \Ext^2_S(k(p_j),k(p_j))\otimes \End(W_j) \Big)
\]
is induced by $\mu_{S,F}$ by restriction to the subspace $V'_{S,F}$ defined in~\eqref{eqroma24}.
\end{rema}

\begin{rema}\label{rmk:AmpleLineBundleDUYK3}
By~\cite[Theorem 21.25]{BLMNPS:family}, the class $\lambda_S$ on $M_{S,n}$ descends to the class $\overline{\lambda}'$ of a primitive ample Cartier divisor on $\overline{M}_{S,n}'$.
In particular, $\overline{\lambda}_S:=g_n^*(\overline{\lambda}_S')$ is the class of a primitive ample Cartier divisor on $\overline{M}_{S,n}$; thus, $\NS(\overline{M}_{S,n})=\ZZ \cdot \overline{\lambda}_S$.
\end{rema} 

\begin{rema}\label{rmk:KuranishiDelta1}
At a point $F:=\cE[1] \oplus k(p)$ in $\phi(\Delta_{n,1})$, for $p\in S$, the Kuranishi morphism is
\[
\begin{matrix}
     \Ext^1_S(\cE,\cE) \oplus U \oplus U^\vee \oplus \Ext^1_S(k(p),k(p)) & \overset{\mu_{S,F}}{\longrightarrow} & \Ext^2(F,F)_0=\CC\\
    (\eta, a_1,a_2,b_1,b_2, \nu) & \longmapsto & a_1b_1+a_2b_2.
 \end{matrix}
\]
where $U=\Ext^1(\cE[1],k(p))$, $U^\vee=\Ext^1(k(p),\cE[1])$. Recall that the Serre duality pairing is given by the Yoneda product, so choosing a basis for $U$ and the dual basis for $U^\vee$, the Kuranishi morphism is as above.
The action of $G_{S,F}\cong\CC^*$ is given as
\[
\gamma.(\eta,a_1,a_2,b_1,b_2,\nu)=(\eta,\gamma a_1,\gamma a_2,\gamma^{-1} b_1, \gamma^{-1} b_2,\nu).
\]
Hence, the non-trivial part of the invariant ring is given by
\[
\CC[a_1,a_2,b_1,b_2]^{\CC^*} \cong \CC[u_1,u_2,u_3,u_4],
\]
where $u_1=a_1b_1$, $u_2=-a_1b_2$, $u_3=a_2b_1$, $u_4=a_2b_2$  with relation $u_1u_4+u_2u_3=0$ (the moment map becomes linear on the GIT quotient: $\mu_{S,F}=u_1+u_4$).

This shows that $\overline{M}_{S,n}$, locally at the point $[F]$, is given as a product
\[
\Ext^1_S(\cE,\cE)\times Q \times \Ext^1_S(k(p),k(p)),
\]
where $Q=\Spec(\CC[u_1,u_2,u_3]/(u_1^2-u_2u_3))$ is the quadric cone.
In particular, we see explicitly that $\overline{M}_{S,n}$ is a local complete intersection along $\Delta_{n,1}$.
\end{rema}

\subsection{The Donaldson--Uhlenbeck--Yau compactification for the projective plane}\label{subsec:DUYP2}

In this section, we recall the analogue of  Section~\ref{subsec:DUYK3} for moduli spaces on $\PP^2$. 
Let $n\equiv 0 \pmod 4$.
Fix the Chern character
\[
u_n:=\left(2, -1, -\frac{n+2}{4} \right)\in H^*(\PP^2,\QQ).
\]
We consider the moduli space $M_{\PP^2,n}$ of Gieseker stable sheaves on $\PP^2$ with Chern character $u_n$.
It is a smooth projective variety of dimension $n$. According to~\cite[Theorem~8.3.3]{HL:moduli} one has  $\omega_{M_{\PP^2,n}}^\vee = \cO_{M_{\PP^2,n}}(3\lambda_{\PP^2})$, where $\lambda_{\PP^2}$ is the determinant line bundle associated to the class of a line in $\PP^2$. By~\cite[Lemma~8.3.4]{HL:moduli} we have $\lambda_{\PP^2}\cong \cD^{\otimes 2}$, where $\cD$ is the determinant line bundle on $M_{\PP^2,n}$ defined in~\cite{Li:uhlencomp} (and denoted by $\cL_{\cM,1}$ in \emph{loc.~cit.}). 
By the main result of~\cite{Li:uhlencomp} it follows that $M_{\PP^2,n}$ is log-Fano and thus a Mori Dream Space (see, e.g.,~\cite[Example 3.2]{Castravet:MDS}).

The Donaldson--Uhlenbeck--Yau compactification $\phi_{\PP^2}\colon M_{\PP^2,n}\to \overline{M}_{\PP^2,n}$ is the divisorial contraction induced by $\lambda_{\PP^2}$ (or equivalently $\omega_{M_{\PP^2,n}}^\vee$).
Thus, the variety $\overline{M}_{\PP^2,n}$ is normal, $\QQ$-factorial, with canonical singularities.

As in the K3 surface case, we also have a description of $\overline{M}_{\PP^2,n}$ as a Bridgeland moduli space in $\Db(\PP^2)$: this is well-known but we sketch the argument here for completeness.
We consider Bridgeland stability conditions $\sigma_{\alpha,\beta}$ on $\Db(\PP^2)$, for $\alpha>0$, as in~\cite[Theorem 6.10]{MS:lectures} and the moduli space $\overline{M}_{\PP^2,n}':=M_{\PP^2,\sigma_{\alpha,-1/2}}(-u_n)$ of Bridgeland semistable objects with phase~1 and Chern character $-u_n$.
We again have a morphism
\[
g_{\PP^2,n}\colon \overline{M}_{\PP^2,n} \lra \overline{M}_{\PP^2,n}'
\]
which is bijective and defined on objects as in~\eqref{eq:GiesekerToDUY}.

\begin{prop}\label{prop:UhlenbeckVsBridgelandP2}
In the above notation, the bijective morphism $g_{\PP^2,n}$ is an isomorphism.
\end{prop}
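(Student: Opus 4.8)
The plan is to follow \emph{mutatis mutandis} the proof of Proposition~\ref{prop:UhlenbeckVsBridgeland}. Since $g_{\PP^2,n}$ is proper, birational and bijective, and $\overline{M}_{\PP^2,n}$ is already normal (being a Donaldson--Uhlenbeck--Yau compactification, as recalled above), it suffices to show that the target $\overline{M}_{\PP^2,n}'$ is normal. As in the K3 case, the strategy is to obtain a local analytic model for $\overline{M}_{\PP^2,n}'$ at a polystable object via an equivariant Kuranishi map, identify this model with a quiver variety (a Marsden--Weinstein reduction), and then invoke the results of Crawley-Boevey to conclude normality.

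First I would establish the analogue of Lemma~\ref{lem:KuranishiK3} for $\PP^2$. A polystable object of $\overline{M}_{\PP^2,n}'$ has the form $F=\cE[1]\oplus\bigoplus_{j=1}^m k(p_j)\otimes W_j$ with $\cE$ a slope-stable torsion-free sheaf on $\PP^2$ and the $p_j$ distinct points. The deformation theory is governed by the DG-Lie algebra $\mathrm{RHom}_{\PP^2}(F,F)$; the formality argument of~\cite{BMM:Formality,PZ:FormalityBridgeland} carries over verbatim using the splitting $\mathrm{Aut}(F)=\mathrm{Aut}(\cE[1])\times\mathrm{Aut}(\bigoplus_j k(p_j)\otimes W_j)$, so the Kuranishi map can be chosen equal to its quadratic part, i.e.\ to the moment map $\mu_{\PP^2,F}\colon \Ext^1_{\PP^2}(F,F)\to\Ext^2_{\PP^2}(F,F)_0$. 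This yields an isomorphism of analytic germs $(\overline{M}_{\PP^2,n}',[F])\cong(\mu_{\PP^2,F}^{-1}(0)\sslash G_{\PP^2,F},0)$, with $G_{\PP^2,F}=(\CC^*\times\prod_j\mathrm{GL}(W_j))/\CC^*$. Decomposing $\Ext^1_{\PP^2}(F,F)$ and using Serre duality on $\PP^2$ (so $\Ext^2_{\PP^2}(k(p_j),\cE)\cong\Hom_{\PP^2}(\cE,k(p_j)\otimes\omega_{\PP^2})^\vee$, etc.), one reads off the relevant quiver: the same shape as $Q$ above but with the number of loops at the central vertex equal to $\dim\Ext^1_{\PP^2}(\cE,\cE)-$ (the irrelevant rigid part), the key point being that the quiver is connected and the dimension vector is $(1,a_1,\dots,a_m)$.

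Then I would run the Crawley-Boevey criterion exactly as in the proof of Proposition~\ref{prop:UhlenbeckVsBridgeland}: by~\cite[Theorem~1.1]{CB:normality} the GIT quotient $\mu_{\PP^2,F}^{-1}(0)\sslash G_{\PP^2,F}$ is normal provided $\mu_{\PP^2,F}^{-1}(0)$ is reduced, and by~\cite[Theorem~1.2(2)]{CB:geometry} this (indeed irreducibility of the fibre) follows from the strict inequality $p(\alpha)>\sum_l p(\beta^{(l)})$ for every nontrivial decomposition $\alpha=\sum_j\beta^{(j)}$. The computation of $p$ on the two relevant types of vectors is the same combinatorial estimate as in~\eqref{eq:CBfunctionp}, using $r\geq1$; the only arithmetic input that changes is the number of loops at vertex~$0$, which only makes the central term larger and hence does not affect the inequality. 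Having shown $\overline{M}_{\PP^2,n}'$ is normal, the bijective proper birational morphism $g_{\PP^2,n}$ between normal varieties is an isomorphism by Zariski's main theorem.

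The main obstacle I anticipate is bookkeeping rather than conceptual: one must check that the formality/equivariant-Kuranishi machinery of Section~\ref{subsec:DUYK3}, which is stated for K3 surfaces (where $\mathrm{RHom}(F,F)$ carries a Calabi--Yau pairing of the right parity), genuinely applies on $\PP^2$, where the canonical bundle is nontrivial and one must carry $\omega_{\PP^2}$ through the duality identifications. Concretely, one should verify that the quadratic-cone local structure and the moment-map description persist, i.e.\ that the relevant $\Ext^2$ groups pair $\Ext^1$'s as required for the double-quiver moment map picture; this is standard for a Bridgeland moduli space at a wall on a surface, but it is the step that requires care. Once this is in place, the remainder is formally identical to the K3 case treated above.
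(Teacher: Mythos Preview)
Your overall strategy---show that $\overline{M}_{\PP^2,n}'$ is normal, whence the proper birational bijection $g_{\PP^2,n}$ between normal varieties is an isomorphism---is correct and is exactly what the paper does. The divergence is in how the two intermediate steps are handled, and the step you yourself flag as the obstacle is where there is a genuine gap.

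\textbf{Formality.} You assert that the argument of~\cite{BMM:Formality,PZ:FormalityBridgeland} ``carries over verbatim'' to $\PP^2$, but then correctly note that those arguments use the trivial canonical bundle in an essential way. The paper does \emph{not} claim formality on $\PP^2$ by transporting the K3 argument directly. Instead it reduces to the K3 case: choose a very general sextic $\mathsf{\Gamma}\subset\PP^2$ avoiding the points $p_1,\dots,p_m$, take the double cover $f\colon S\to\PP^2$, and factor $f$ as $S\xrightarrow{f_1}[S/\mu_2]\xrightarrow{f_2}\PP^2$. Since $f_2^*$ realizes $\Db(\PP^2)$ as a full semiorthogonal component of $\mathrm{D}^{\mathrm{b}}_{\mu_2}(S)$ and both categories have strongly unique DG enhancements, formality of $\mathrm{RHom}_{\PP^2}(F,F)$ is equivalent to formality of $\mathrm{RHom}_{[S/\mu_2]}(f_2^*F,f_2^*F)$ (via~\cite[Proposition~2.13]{BZ:Kuranishi}); the latter is then deduced from formality on $S$ along the \'etale cover $f_1$ exactly as in~\cite{BMM:Formality}, using Manetti's formality transfer. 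This reduction is the idea missing from your sketch; without it the obstacle you name is real, not ``bookkeeping''.

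\textbf{Normality.} Here your approach would work but is more laborious than necessary. Rather than re-running the Crawley-Boevey estimate for a $\PP^2$-specific quiver, the paper splits off the smooth factor $\Ext^1_{\PP^2}(\cE,\cE)$ and observes that the remaining data $(V'_{\PP^2,F},\mu'_{\PP^2,F},G_{\PP^2,F})$ depend only on the vector spaces $W_j$ and the analytic germs of the surface at the points $p_j$---not on the global geometry. Hence the singularities of $\overline{M}_{\PP^2,n}'$ are analytically locally isomorphic to those of $\overline{M}_{S,n/2}'$, and normality is inherited from the K3 case already established in Proposition~\ref{prop:UhlenbeckVsBridgeland}.
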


Proposition~\ref{prop:UhlenbeckVsBridgelandP2} is well-known, since both $M_{\PP^2,n}$ and $\overline{M}_{\PP^2,n}'$ can be described globally as moduli spaces of quiver representations in the sense of King (\cite{King:Moduli}) by considering the quiver with relations associated to the exceptional collection
\[
\left\{\cO_{\PP^2}(-1), \Omega_{\PP^2}(1), \cO_{\PP^2} \right\}
\]
(see~\cite{DLP}; in the quiver language, see, e.g.,~\cite[Main Theorem 1.3]{Okhawa:P2})).
Then we can directly show that $g_{\PP^2,n}$ is an isomorphism, by looking at global sections of line bundles. 

We will sketch instead a proof as in the K3 case, by showing first that the Kuranishi map is again quadratic, since we will need this description in the next section.
We let 
\[
F:=\cE[1]\oplus \bigoplus_{j=1}^m\, k(p_j)\otimes W_j
\]
be a polystable element in $\overline{M}_{\PP^2,n}'$, where $p_1,\dots,p_m\in \PP^2$ are distinct closed points and $W_j$ are vector spaces of dimension $a_j>0$, with $\ell:=\sum_j a_j$.
We let $V_{\PP^2,F}:=\Ext^1_{\PP^2}(F,F)$ and the group 
\begin{equation}\label{eq:groupP^2}
G_{\mathbb{P}^2,F}:=\left(\CC^*\times \prod_{j=1}^m\mathrm{GL}(W_j)\right)\bigg/\CC^*
\end{equation}
acts on $V_{\mathbb{P}^2,F}$ by conjugation component-wise.

Consider the Yoneda product
\[
\mu_{\PP^2,F}\colon V_{\PP^2,F} \lra \Ext^2_{\PP^2}(F,F)
\]
and the Kuranishi map
\[
\kappa_{\PP^2,F}\colon \widehat{V}_{\PP^2,F} \lra \Ext^2_{\PP^2}(F,F).
\]

\begin{lemm}\label{lem:KuranishiP2}
We can choose a Kuranishi map so that the equality $\kappa_{\PP^2,F}=\mu_{\PP^2,F}$ holds. In particular, we have an isomorphism of analytic germs
\[
\left(\overline{M}_{\PP^2,n}',[F]\right)\cong \Big( \mu_{\PP^2,F}^{-1}(0)\sslash G_{\PP^2,F},[0]\Big).
\]
\end{lemm}

\begin{proof}
The DG-Lie algebra $\mathrm{RHom}_{\PP^2}(F,F)$ controls the local structure of $\overline{M}_{\PP^2,n}$.
We claim that the former is formal. 
This can be checked either directly, or by reducing to the case of K3 surfaces; we sketch the latter argument, by using the ideas in \cite{BZ:Kuranishi} and \cite{BMM:Formality}.

Let us choose a smooth very general sextic curve $\mathsf{\Gamma}$ in $\PP^2$ which does not pass through the points $p_1,\dots,p_m$. Then we consider the 2-1 cover $f\colon S\to \PP^2$ ramified at $\mathsf{\Gamma}$; the assumption on $\mathsf{\Gamma}$ guarantees that the pull-back complex $f^*F$ is again polystable in $S$.
Let us further consider the factorization of $f$ via the stack $\left[S/\mu_2\right]$:
\[
f\colon S \xlongrightarrow{f_1} \left[S/\mu_2\right] \xlongrightarrow{f_2} \PP^2.
\]
We will examine the two morphisms $f_1$ and $f_2$ separately.
We start with $f_2$.
The derived category of $\left[S/\mu_2\right]$ is given by the $\mu_2$-equivariant derived category $\mathrm{D}^{\mathrm{b}}_{\mu_2}(S)$.
By~\cite[Proposition 6.10]{CanonacoStellari:Uniqueness}, this category has a strongly unique DG-enhancement (see, for example, \cite{CanonacoStellari:Tour} for a survey on DG-enhancements and their uniqueness).
By~\cite{PolVdB:equivariant}, the pull-back functor $f_2^*$ realizes $\Db(\PP^2)$ as a full semiorthogonal component in $\mathrm{D}^{\mathrm{b}}_{\mu_2}(S)$; in particular, it inherits a DG enhancement from an enhancement of $\mathrm{D}^{\mathrm{b}}_{\mu_2}(S)$.
Since $\Db(\PP^2)$ has also a strongly unique DG-enhancement (see~\cite{LO:Uniqueness}), we can apply~\cite[Proposition 2.13]{BZ:Kuranishi} and deduce that the DG-Lie algebra $\mathrm{RHom}_{\PP^2}(F,F)$ is formal if and only if the DG-Lie algebra $\mathrm{RHom}_{[S/\mu_2]}(f_2^*F,f_2^*F)$ is formal.

Now we can argue exactly as in~\cite{BMM:Formality} for the morphism $f_1$.
Indeed, this is \'etale and we can consider the algebra $\cC:=f_{1,*}\cO_S$ of rank~2.
Then, by proceeding as in the proof of~\cite[Theorem 5.3]{BMM:Formality}, we deduce the formality of $\mathrm{RHom}_{[S/\mu_2]}((f_2^*F)\otimes\cC,(f_2^*F)\otimes\cC)$.
By the formality transfer theorem~\cite[Theorem 3.4]{Manetti:FormalityCriteria}, this implies the formality of $\mathrm{RHom}_{[S/\mu_2]}(f_2^*F,f_2^*F)$, as we needed.
\end{proof}

To prove Proposition~\ref{prop:UhlenbeckVsBridgelandP2}, we starty by observing that, as in the K3 surface case, neither the Yoneda product nor the group $G_{\P^2,F}$ involve the first direct factor in $V_{\PP^2,F}$.
More precisely, if we write, as in the K3 case
\[
V_{\PP^2,F} = \Ext^1_{\PP^2}(\cE,\cE) \oplus V_{\PP^2,F}',
\]
then we have a splitting as in Remark~\ref{rmk:SplitKuranishi}:
\begin{equation*}\label{eq:SplittingP2}
\left(\overline{M}_{\PP^2,n}',[F]\right)\cong \Big(\Ext^1_{\PP^2}(\cE,\cE) \times \big( (\mu_{\PP^2,F}')^{-1}(0)\sslash G_{\P^2,F}\big),[0]\Big),
\end{equation*}
where
\[
\mu_{\PP^2,F}'\colon V_{\PP^2,F}'\lra \bigoplus_{j=1}^m \Big( \Ext^2_{\PP^2}(k(p_j),k(p_j))\otimes \End(W_j) \Big)
\]
is induced by $\mu_{\PP^2,F}$ by restriction.
The elementary observation is that the pair $(V_{\PP^2,F}',\mu_{\PP^2,F}')$ and the group $G_{\P^2,F}$ only depend on the vector spaces $W_j$ and the analytic neighbours of the points $p_j$, and not on the global structure of $\cE$.
Hence, the singularities of $\overline{M}_{\PP^2,n}'$ are analytically locally the same as those of $\overline{M}_{S,n/2}'$.
In particular, $\overline{M}_{\PP^2,n}'$ is normal, thus completing the proof of Proposition~\ref{prop:UhlenbeckVsBridgelandP2}.

\begin{rema}\label{rmk:AmpleLineBundleDUYP2}
As in Remark~\ref{rmk:AmpleLineBundleDUYK3}, the class $\lambda_{\PP^2}$ descends to the class $\overline{\lambda}_{\PP^2}$ of a primitive ample Cartier divisor on $\overline{M}_{\PP^2,n}$.
In particular, $\overline{M}_{\PP^2,n}$ has Gorenstein singularities and is Fano of index~3.
The fact that the singularities are rational follows for example from the above observation that $\overline{M}_{\PP^2,n}$ is analytically locally isomorphic to the moduli space $\overline{M}_{S,n/2}$.
Moreover, as in Remark~\ref{rmk:KuranishiDelta1}, $\overline{M}_{\PP^2,n}$ is a local complete intersection outside a locus of codimension at least~3. \end{rema}

\subsection{Embeddings of moduli spaces}\label{subsec:EmbeddingsModuliSpaces}

Let $(S,h)$ be a polarized K3 surface of genus $2$ such that $\NS(S)=\ZZ h$.
We denote by $f\colon S \to \P^2$ the associated double cover, ramified on a very general sextic curve $\mathsf{\Gamma}\subset \PP^2$, and by $\tau_S$ the covering involution on $S$.

Let $n\equiv 0 \pmod 4$ and consider the involutions $\tau_M$, on the moduli space $M_{S,n}$, and $\tau_{\overline{M}}$, on $\overline{M}_{S,n}$, induced by 
$\tau_S$.
The two involutions $\tau_M$ and $\tau_{\overline{M}}$ satisfy $\tau_{\overline{M}}\circ\phi=\phi\circ\tau_M$, where $\phi\colon M_{S,n}\to \overline{M}_{S,n}$ is the divisorial contraction of Section \ref{subsec:DUYK3}, and $\tau_M$ preserves the stratification of the exceptional divisor $\delta$ given in Lemma~\ref{lem:DUYdivisor}\ref{enum:DUYdivisor3}.

The pull-back morphism induces a closed embedding
\[
\iota:= f^*\colon M_{\PP^2,n} \longhookrightarrow M_{S,n}.
\]

\begin{lemm}
There exists a morphism $\overline{\iota}$  such that the following diagram is commutative:
\[
\begin{tikzcd}
M_{\PP^2, n}\arrow[hookrightarrow, r,"\iota"] \arrow[d,"\varphi_{\mathbb{P}^2}"] & M_{S, n} \arrow[d, "\varphi"]\\
\overline{M}_{\PP^2, n}\arrow[r,"\overline{\iota}"]& \overline{M}_{S, n}
\end{tikzcd}
\]
\end{lemm}

\begin{proof}
Let 
\begin{equation*}
R(\lambda_S)\coloneqq \bigoplus_{k=0}^{\infty} H^0(M_{S,n},\cO_{M_{S,n}}(\lambda_S)^{\otimes k}),\qquad
R(\lambda_{\PP^2})\coloneqq \bigoplus_{k=0}^{\infty} H^0(M_{\PP^2,n},\cO_{M_{\PP^2,n}}(\lambda_{\PP^2})^{\otimes k}).
\end{equation*}
Since $\iota^{*}\cO_{M_{S,n}}(\lambda_S)\cong \cO_{M_{\PP^2,n}}(\lambda_{\PP^2})$, we have a homomorphism of graded rings
\[
\iota^*\colon R(\lambda_S) \longrightarrow R(\lambda_{\PP^2}).
\]
By Lemma~\ref{lem:DUYdivisor}, we know that $\overline{M}_{S, n}=\Proj R(\lambda_S)$, and similarly $\overline{M}_{\PP^2, n}=\Proj R(\lambda_{\PP^2})$.
Hence, the morphism $\phi\circ\iota$ corresponds to $\Im(\iota^*)\subset R(\lambda_{\PP^2})$. Since $\phi_{\PP^2}$ corresponds to the whole of $R(\lambda_{\PP^2})$, the result follows. 
\end{proof}

The main result of this section is the following.

\begin{theo}\label{thm:embedding}
The morphism $\overline{\iota}\colon \overline{M}_{\PP^2,n}\rightarrow \overline{M}_{S,n}$ is a closed embedding.
Moreover, $\overline{\iota}(\overline{M}_{\PP^2,n})$ is an irreducible component of the fixed locus of $\tau_{\overline{M}}$ in $\overline{M}_{S,n}$ with its reduced induced scheme structure.
\end{theo}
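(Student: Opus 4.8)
The plan is to check that $\overline{\iota}$ is a closed embedding by combining three ingredients: (i) $\overline{\iota}$ is proper and injective on points, so it is a homeomorphism onto its (closed) image; (ii) $\overline{M}_{\PP^2,n}$ is normal (Proposition~\ref{prop:UhlenbeckVsBridgelandP2}); and (iii) $\overline{\iota}$ is an isomorphism onto its image on an analytic-local level at every point. Since $\overline{M}_{\PP^2,n}$ is normal and $\overline{\iota}$ is a finite birational morphism onto its image, once we know the image is normal it follows that $\overline{\iota}$ is an isomorphism onto the image; alternatively one can argue directly that $\overline{\iota}$ induces a surjection on completed local rings at each point, using the explicit Kuranishi descriptions. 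I expect to proceed via the second route, as it meshes with the local analytic pictures developed in Sections~\ref{subsec:DUYK3} and~\ref{subsec:DUYP2}.

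First I would verify properness and injectivity. Properness is immediate since $\overline{M}_{\PP^2,n}$ is projective. For injectivity on closed points, recall that a point of $\overline{M}_{\PP^2,n}$ is a pair $([\cG],Z)$ with $\cG$ a slope-stable bundle on $\PP^2$ of the appropriate Chern character and $Z\in (\PP^2)^{(\ell)}$, and it is sent to $(f^*[\cG],f^*Z)\in\overline{M}_{S,n}$, i.e.\ to the pair $([f^*\cG],\,\text{the $0$-cycle }f^{-1}(Z))$. Because $f$ is finite of degree $2$ and $\NS(S)=\ZZ h$, the bundle $f^*\cG$ is again slope-stable (this is where the genericity of $\mathsf{\Gamma}$ enters: $f^*\cG$ has no destabilizing subsheaf since such a subsheaf would have to be $\tau_S$-anti-invariant in a way incompatible with the Picard rank), and $\cG$ is recovered from $f^*\cG$ as the $\tau_S$-invariant part of $f_*f^*\cG$; likewise $Z$ is recovered from $f^{-1}(Z)$ as the image cycle. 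Hence $\overline{\iota}$ is injective on points, and being proper it is a closed map, so it is a homeomorphism onto a closed subset of $\overline{M}_{S,n}$.

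Next I would establish the local analytic statement, which is the technical heart. At a polystable point $F=\cG[1]\oplus\bigoplus_j k(p_j)\otimes W_j$ of $\overline{M}_{\PP^2,n}'$, Lemma~\ref{lem:KuranishiP2} and the splitting~\eqref{eq:SplittingP2} identify the germ with $\Ext^1_{\PP^2}(\cG,\cG)\times\big((\mu_{\PP^2,F}')^{-1}(0)\sslash G_{\PP^2,F}\big)$; similarly Lemma~\ref{lem:KuranishiK3} and~\eqref{eq:SplittingK3} identify the germ of $\overline{M}_{S,n}'$ at $f^*F=f^*\cG[1]\oplus\bigoplus_j k(q_j')\otimes W_j$ (where $f^{-1}(p_j)=\{q_j',q_j''\}$, and one must track that each $k(p_j)\otimes W_j$ pulls back to $k(q_j')\otimes W_j\oplus k(q_j'')\otimes W_j$ since $\mathsf\Gamma$ avoids the $p_j$) with $\Ext^1_S(f^*\cG,f^*\cG)\times\big((\mu_{S,f^*F}')^{-1}(0)\sslash G_{S,f^*F}\big)$. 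The key point is that the "quiver factor" $(\mu'_{\bullet,F})^{-1}(0)\sslash G_{\bullet,F}$ depends only on the vector spaces $W_j$ and the local deformation data at the points, and these are literally the \emph{same} quiver data on both sides (with $S$'s quiver having twice as many satellite vertices, corresponding to $q_j',q_j''$, each carrying the same $W_j$). Thus $\overline{\iota}$ on germs is, up to the quiver factors, the map induced by $f^*\colon \Ext^1_{\PP^2}(\cG,\cG)\hookrightarrow\Ext^1_S(f^*\cG,f^*\cG)$, which is injective, and I must show the induced map on completed local rings is \emph{surjective}; equivalently that the image germ is cut out inside the $S$-germ by equations realizing it as the fixed locus of $\tau_{\overline M}$. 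Here the $\tau_S$-action exchanges $q_j'\leftrightarrow q_j''$, and the fixed points of the induced action on the $S$-side quiver moment-map quotient are exactly the diagonal $W_j$-data, reproducing the $\PP^2$-side quotient; on the $\Ext^1(\cG,\cG)$ factor the fixed subspace of $\tau_S^*$ on $\Ext^1_S(f^*\cG,f^*\cG)$ is precisely $\Ext^1_{\PP^2}(\cG,\cG)$ by the projection-formula decomposition $f_*f^*\cG=\cG\oplus(\cG\otimes\cO_{\PP^2}(-3))$ into $\tau_S$-eigen-summands. So $\overline{\iota}$ identifies the $\PP^2$-germ with the $\tau_{\overline M}$-fixed germ in the $S$-side, which is reduced since the ambient is reduced and the fixed locus of an involution on a reduced (indeed normal) scheme is reduced; comparing with the normal scheme $\overline M_{\PP^2,n}$, the finite birational bijection $\overline\iota$ is then an isomorphism onto this fixed germ. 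The main obstacle I anticipate is precisely this last bookkeeping: matching the pulled-back polystable object $f^*F$ with the correct quiver (in particular that $\cG[1]$'s $\Ext$-algebra pulls back compatibly, that the points split into pairs, and that $\tau_S$ acts on the local model by swapping the two preimage-vertices and by the eigenspace decomposition on $\Ext^1_S(f^*\cG,f^*\cG)$), and then checking that the scheme-theoretic fixed locus of this explicit involution on the normal variety $\overline M_{S,n}$ is reduced and analytically isomorphic to $\overline M_{\PP^2,n}$. Granting all this, the final sentence of the theorem is immediate: $\overline\iota(\overline M_{\PP^2,n})$ is closed, irreducible, contained in $\Fix(\tau_{\overline M})$, and of the maximal possible dimension $n$ there, hence a connected (thus irreducible) component.
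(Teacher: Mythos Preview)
Your proposal has a genuine gap: you assume throughout that the points $p_j$ avoid the branch sextic $\mathsf{\Gamma}$ (you write ``since $\mathsf{\Gamma}$ avoids the $p_j$''), but there is no reason for this. The polystable object $F=\cG[1]\oplus\bigoplus_j k(p_j)\otimes W_j$ is an \emph{arbitrary} point of $\overline{M}_{\PP^2,n}$, and the $p_j$ can certainly lie on $\mathsf{\Gamma}$. When $p\in\mathsf{\Gamma}$, the preimage $f^{-1}(p)$ is a single point $q$, the pullback $f^*k(p)$ is a length-$2$ non-split scheme supported at $q$, and so $f^*F$ is \emph{not} polystable. Its polystable reduction replaces $W$ by $W_S:=W\otimes H^0(S,f^*k(p))$, doubling the dimension, and the involution $\tau_S$ acts on $W_S$ through the sign decomposition of $H^0(S,f^*k(p))$, not by swapping two vertices. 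Your ``same quiver data on both sides, with twice as many satellite vertices'' picture simply does not apply here, and this is precisely the case that carries all the difficulty.

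The paper treats this ramified case by writing down an explicit $\GL(W)$-equivariant map $V'_{\PP^2,F}\to V'_{S,\overline{\iota}(F)}$ (see the formula for $(u\otimes A,\,v\otimes B)\mapsto(x\otimes\mathrm{diag}(A,A),\,y\otimes\begin{smallmatrix}0&\mathrm{Id}\\B&0\end{smallmatrix})$) and then proving that the induced ring map $\CC[V'_{S,\overline{\iota}(F)}]^{\GL(W_S)}\to\CC[V'_{\PP^2,F}]^{\GL(W)}$ is surjective by appealing to Procesi's description of the generators of the invariant ring for tuples of matrices, vectors, and covectors. This invariant-theoretic step is the heart of the argument and is entirely missing from your outline. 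Your alternative strategy of identifying the image with the scheme-theoretic fixed locus of $\tau_{\overline{M}}$ and then invoking reducedness would also need to confront this case: at a ramified point the local involution on the $S$-side Kuranishi model is not a simple swap, and checking that its fixed subscheme is reduced and isomorphic to the $\PP^2$-side model amounts to the same invariant-theory computation.
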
   

In order to prove Theorem~\ref{thm:embedding} we need to go through a few preliminary results about the local picture.
Let
\[
F_p = \cE[1] \oplus \left( k(p)\otimes W \right),
\]
be a polystable element in $\overline{M}_{\PP^2,n}$, where $p\in \PP^2$ is a closed point and $W$ is a vector space of dimension $a>0$.
Let $\overline{\iota}(F_p)$ be the polystable object associated to $f^*F_p$.
We recall the notation $V_{S,\overline{\iota}(F_p)}'$ from~\eqref{eqroma24}, and similarly define
\begin{equation}\label{eq:VPprime}
\begin{split}
V_{\PP^2,F_p}' \coloneqq \left(\Hom_{\PP^2}(\cE,k(p))\otimes W\right) &\oplus \left(\Ext^2_{\PP^2}(k(p),\cE)\otimes W^\vee\right)\\
&\oplus \left(\Ext^1_{\PP^2}(k(p),k(p))\otimes \End(W)\right)    
\end{split}
\end{equation}
The group $G_{S,\overline{\iota}(F_p)}$, defined in~\eqref{eq:group}, acts on $V_{S,\overline{\iota}(F_p)}'$. Similarly, the group $G_{\PP^2,F_p}$, defined in \eqref{eq:groupP^2}, acts on $V_{\PP^2,F_p}'$.

Our goal is to define a morphism of groups $G_{\PP^2,F_p}\to G_{S,\overline{\iota}(F_p)}$ and an equivariant morphism
\[
\widetilde{\iota}_p^{\ \prime}\colon V_{\PP^2,F_p}'\longrightarrow V_{S,\overline{\iota}(F_p)}'
\]
so that the induced map
\[
\widehat{\iota}_p^{\ \prime}\colon V_{\PP^2,F_p}'\sslash G_{\P^2,F_p} \longrightarrow V_{S,\overline{\iota}(F_p)}'\sslash G_{S,\overline{\iota}(F_p)}
\]
on the GIT quotients is a closed embedding.
We divide the construction of $\widetilde{\iota}_p^{\ \prime}$ into two cases according to whether the closed point $p$ belongs to $\mathsf{\Gamma}$ or not.

\subsubsection*{Case 1: $p\not\in\mathsf{\Gamma}$}
In this case, we have that
\[
f^*F_p=f^*\cE [1] \oplus \left( k(q_1)\otimes W \right) \oplus \left( k(q_2)\otimes W \right)
\]
is again polystable, where $q_1,q_2\in S$ are the two distinct points in the preimage $f^{-1}(p)$. Hence $\overline{\iota}(F_p)=f^*F_p$. 
We then have identifications 
\[
V_{S,f^*F_p}' \cong V'_{\PP^2,F_p} \oplus V'_{\PP^2,F_p},
\]
and
\[
G_{\P^2,F_p}\cong \GL(W), \qquad G_{S,f^*F_p} \cong \GL(W) \times \GL(W).
\]
With respect to these identifications, we consider the diagonal morphism 
\begin{equation}\label{eq:DefOfIotaTilde}
\begin{matrix}
\widetilde{\iota}_p^{\ \prime}\colon V'_{\PP^2,F_p} & \longrightarrow & V'_{\PP^2,F_p} \oplus V'_{\PP^2,F_p} \cong V_{S,f^*F_p}' \\
\ x & \longmapsto & \!\!\!\!\!\!\!\!\!\!\!\! (x,x)
\end{matrix}.
\end{equation}
This map is equivariant with respect to the diagonal inclusion
\[
G_{\P^2,F_p}\cong \GL(W) \longhookrightarrow \GL(W) \times \GL(W) \cong G_{S,f^*F_p},     
\]
and hence descends to a morphism on GIT quotients
\[
\widehat{\iota}_p^{\ \prime}  \colon V_{\PP^2,F_p}'\sslash G_{\P^2,F_p} \lra V_{S,\overline{\iota}(F_p)}'\sslash G_{S,\overline{\iota}(F_p)}. 
\]

\begin{lemm}\label{lem:PointIsNotInGamma}
Suppose that $p\notin \mathsf{\Gamma}$. 
Then the morphism $\widehat{\iota}_p^{\ \prime}$ is a closed embedding. 
\end{lemm}

\begin{proof}
By an elementary computation, the induced morphism at the level of invariant rings
\[
\CC[V'_{\PP^2,F_p}\oplus V'_{\PP^2,F_p}]^{\GL(W) \times \GL(W)} \longrightarrow \CC[V'_{\PP^2,F_p}]^{\GL(W)}
\]
is surjective and so $\widehat{\iota}_p^{\ \prime}$ is indeed a closed embedding.
\end{proof}

\begin{rema}\label{rmk:0Case1}
Suppose that $p\notin \mathsf{\Gamma}$. 
Then $\widehat{\iota}_p^{\ \prime}$ maps $[0]$ to $[0]$ because $\widetilde{\iota}_p^{\ \prime}(0)=0$.
\end{rema}

\subsubsection*{Case 2: $p\in\mathsf{\Gamma}$} The case when $p\in\mathsf{\Gamma}$ is more complicated because $f^*F_p$ is no longer polystable, so we need to worry about the polystable reduction when identifying the various vector spaces and group actions. 
In this case, we have
\[
f^*F_p = f^*\cE[1] \oplus \left(f^*k(p) \otimes W\right).
\]
Hence, the polystable reduction is given by
\[
\overline{\iota}(F_p) = f^*\cE[1] \oplus \left(k(q) \otimes W_S\right),
\]
where 
\begin{equation*}\label{eq:WS}
W_S:=W\otimes H^0(S,f^*k(p))    
\end{equation*}
and $q\in S$ is the only point in the preimage $f^{-1}(p)$.
As a consequence, we have
\begin{equation}\label{eq:VSprime}
\begin{split}
V'_{S,\overline{\iota}(F_p)} = \left(\Hom_{S}(f^*\cE,k(q))\otimes W_S\right) &\oplus \left(\Ext^2_{S}(k(q),f^*\cE)\otimes W_S^\vee\right)\\
&\oplus \left(\Ext^1_{S}(k(q),k(q))\otimes \End(W_S)\right).    
\end{split}
\end{equation}

In order to define the morphism $\widetilde{\iota}_p^{\ \prime}\colon V_{\PP^2,F_p}'\to V_{S,\overline{\iota}(F_p)}'$, we first need to look at the action of $\mu_2$ on $V_{S,\overline{\iota}(F_p)}'$ induced by the involution $\tau_{S}$.
We recall from the introduction the notation $(\CC_{+},\rho_+)$ and $(\CC_{-},\rho_-)$ for the two irreducible representations of $\mu_2$.
To start with, the action of $\mu_2$ on $W$ is trivial and on $H^0(S,f^*k(p))$ we have a decomposition
\[
H^0(S,f^*k(p)) = \C_+ \oplus \C_-.
\]
These induce a decomposition
\begin{equation}\label{eq:Decomposition1}
W_S=\left[W\otimes \CC_{+}\right] \oplus \left[W\otimes \CC_{-}\right].
\end{equation}
Moreover, we have 
\begin{equation}\label{eq:ActionInvolutionHomExt}
\begin{split}
&\Hom_{S}(f^*\cE,k(q)) \cong \CC_{+}^{\oplus 2}\\
&\Ext^2_{S}(k(q),f^*\cE) \cong \CC_{-}^{\oplus 2},
\end{split}    
\end{equation}
where $f^*\cE$ has the natural $\mu_2$-linearization and $k(q)$ the trivial one.
In equation~\eqref{eq:ActionInvolutionHomExt}, the first equality is a direct computation and the second equality follows from the first by Serre duality and the fact that the covering involution $\tau_S$ is anti-symplectic. 
This gives the decompositions
\begin{equation}\label{eq:ActionInvolutionHomExtW}
\begin{split}
&\Hom_{S}(f^*\cE,k(q))\otimes W_S\cong \left[W\otimes \CC_{+}^{\oplus 2}\right] \oplus \left[W\otimes \CC_{-}^{\oplus 2}\right]\\
&\Ext^2_{S}(k(q),f^*\cE)\otimes W_S^\vee\cong \left[W^\vee\otimes \CC_{+}^{\oplus 2}\right] \oplus \left[W^\vee\otimes \CC_{-}^{\oplus 2}\right].    
\end{split}    
\end{equation}
The key part to analyze is the last summand of $V_{S,\overline{\iota}(F_p)}'$ in \eqref{eq:VSprime}.
Let  $(x,y)$ be local coordinates near $q\in S$  such that the generator of $\mu_2$ acts as follows:
\[
x\longmapsto x\quad\text{ and }\quad y\longmapsto -y.
\]
Then we have the identification
\begin{equation*}\label{eq:ExtDecomposition}
\Ext^1_{S}(k(q),k(q)) = \CC\cdot x \oplus \CC\cdot y = \CC_{+} \oplus \CC_{-}.    
\end{equation*}
It follows that the invariant part of $\Ext^1_{S}(k(q),k(q))\otimes \End(W_S)$, with respect to the induced decomposition of $\End(W_S)$ coming from~\eqref{eq:Decomposition1}, is
\[
\left[x \otimes \begin{pmatrix}\End(W)&0\\0&\End(W)\end{pmatrix}\right] \oplus \left[y\otimes \begin{pmatrix}0&\End(W)\\\End(W)&0\end{pmatrix}\right].
\]

We now define the morphism $\widetilde{\iota}_p^{\ \prime}\colon V_{\PP^2,F_p}'\to V_{S,\overline{\iota}(F_p)}'$, using the decomposition~\eqref{eq:VPprime}, as follows.
The adjoint morphism gives the identification
\[
\Hom_{\PP^2}(\cE,k(p))=\Hom_{S}(f^*\cE,k(q))
\]
and, by~\eqref{eq:ActionInvolutionHomExt}, $\mu_2$ acts trivially on the right hand side.
By taking duals, we get the identification
\[
\Ext^2_{\PP^2}(k(p),\cE)=\Ext^2_{S}(k(q),f^*\cE)
\]
and, by~\eqref{eq:ActionInvolutionHomExt}, the generator of $\mu_2$ acts as multiplication by $(-1)$ on the right hand side. 
The inclusions of the invariant summand $W=W\otimes\C_+\hookrightarrow W_S$ (see~\eqref{eq:Decomposition1}) and $W^{\vee}=W^{\vee}\otimes\C_-={\rm Ann}(W\otimes\CC_{+})\hookrightarrow W_S^{\vee}$ give  embeddings
\begin{equation}\label{eq:DefOfIotaHatFirstTwoFactors}
\begin{split}
&\alpha\colon \Hom_{\PP^2}(\cE,k(p))\otimes W \longhookrightarrow \Hom_{S}(f^*\cE,k(q))\otimes W_S,\\    
&\beta\colon \Ext^2_{\PP^2}(k(p),\cE)\otimes W^\vee\longhookrightarrow \Ext^2_{S}(k(q),f^*\cE)\otimes W_S^\vee,
\end{split}    
\end{equation}
with images the $(+1)$-eigenspaces of the involution in the decompositions~\eqref{eq:ActionInvolutionHomExtW}.

Lastly, let  $(w,z)$ local coordinates on $\PP^2$ near $p$ such that the morphism $S\to\PP^2$ is  given locally by
\[
(x,y)\longmapsto (w,z)=(x,y^2).
\]
We have the identification
\[
\Ext^1_{\PP^2}(k(p),k(p))\otimes \End(W)=\left[w\otimes \End(W)\right] \oplus \left[z\otimes \End(W)\right].
\]
We define the morphism
\begin{multline}
\left[w\otimes \End(W)\right] \oplus \left[z\otimes \End(W)\right] \xrightarrow{{\gamma}} \\
\xrightarrow{\gamma}\left[x \otimes \begin{pmatrix}\End(W)&0\\0&\End(W)\end{pmatrix}\right] \oplus \left[y\otimes \begin{pmatrix}0&\End(W)\\ \End(W)&0\end{pmatrix}\right]
\end{multline}
%
%\begingroup\makeatletter\def\f@size{10}\check@mathfonts
%\[
%\left[w\otimes \End(W)\right] \oplus \left[z\otimes \End(W)\right] \xrightarrow{\textcolor{teal}%{\gamma}} \left[x \otimes \begin{pmatrix}\End(W)&0\\0&\End(W)\end{pmatrix}\right] \oplus \left[y\otimes \begin{pmatrix}0&\End(W)\\ \End(W)&0\end{pmatrix}\right]
%\]
%\endgroup
by setting
\begin{equation}\label{eq:MorphismPhi}
\gamma\left(w\otimes A,z\otimes B\right)\coloneqq\left(x \otimes \begin{pmatrix}A&0\\0&A\end{pmatrix},y\otimes \begin{pmatrix}0&\mathrm{Id}_W\\ B&0\end{pmatrix}\right).
\end{equation}

\begin{defi}
Let $\widetilde{\iota}_p^{\ \prime}\colon V_{\PP^2,F_p}'\to V_{S,\overline{\iota}(F_p)}'$ be the morphism defined by
\begin{equation}\label{eq:DefOfIotaTilde2}
\widetilde{\iota}_p^{\ \prime}
\left(v,\phi,w\otimes A,z\otimes B\right) \coloneqq \left(\alpha(v),\beta(\phi), \gamma(w\otimes A,z\otimes B)\right).  %  
\end{equation}
where $v\in\Hom_{\PP^2}(\cE,k(p)\otimes W)$, $\phi\in\Ext^2_{\PP^2}(k(p)\otimes W,\cE)$,  $A,B\in\End(W)$, and we are referring 
to the decomposition in~\eqref{eq:VPprime}.
\end{defi}

Notice that $\widetilde{\iota}_p^{\ \prime}$ is affine and not linear.
The motivation underlying the above definition is the need to define globally a square root of the coordinate $z$, see the proof of Proposition~\ref{prop:ennebarnorm}.

The morphism $\widetilde{\iota}_p^{\ \prime}$ is equivariant with respect to the group homomorphism
\[
\GL(W) \longrightarrow \GL(W_S) \qquad M\mapsto M\otimes\mathrm{Id}_{H^0(S,f^*k(p))}.
\]
It follows that it descends to a morphism of GIT quotients
\[
\widehat{\iota}_p^{\ \prime}\colon V_{\PP^2,F_p}'\sslash G_{\P^2,F_p} \lra V_{S,\overline{\iota}(F_p)}'\sslash G_{S,\overline{\iota}(F_p)}. 
\]

\begin{lemm}\label{lem:PointIsInGamma}
Suppose that $p\in\mathsf{\Gamma}$. 
Then the morphism $\widehat{\iota}_p^{\ \prime}$ is a closed embedding.
\end{lemm}

\begin{proof}
To check that $\widehat{\iota}_p^{\ \prime}$ is a closed embedding, as before, we need to study the induced morphism at the level of invariant rings
\begin{equation}\label{eq:InducedMorphismInvariants}
\CC[V_{S,\overline{\iota}(F_p)}']^{\GL(W_S)}\longrightarrow \CC[V_{\PP^2,F_p}']^{\GL(W)}.
\end{equation}
We use \cite[Theorem 12.1]{procesi:invariants}: the ring of invariants for the action of $\mathrm{SL}(W)$ on $V'$ (this is denoted by $T_{2,2,2}$ in~\emph{loc.~cit.}) is generated by the following functions.
Let us denote a vector in $V_{\PP^2,F_p}'$ by $(v_1,v_2,\phi_1,\phi_2,M_1,M_2)$; then the generators of the ring of invariants are of the form:
\begin{enumerate}[{\rm (a)}]
    \item\label{procesi1} $\mathrm{Tr}(A)$, 
    \item\label{procesi2} scalar products $\phi_jAv_i$,
    \item\label{procesi3} brackets $[A^{(1)}v_{i_1},\dots,A^{(n)} v_{i_n}]$,
    \item\label{procesi4} brackets $[\phi_{i_1}B^{(1)},\dots,\phi_{i_n} B^{(n)}]$,
\end{enumerate}
where $A,A^{(1)},\dots,A^{(n)},B^{(1)},\dots,B^{(n)}$ are non-commutative monomial in the matrices $M_1$ and $M_2$, $v_i,v_{i_1},\dots,v_{i_n}$ are vectors, $\phi_j,\phi_{i_1},\dots,\phi_{i_n}$ are covectors, with $i,j,i_1,\dots,i_n\in\{1,2\}$.

The generators of the form~\ref{procesi3} and~\ref{procesi4} are not invariant under $\GL(W)$: they are only relative invariants.
To get an invariant we need to multiply them.
As remarked in the proof of~\emph{loc.~cit.}, the product
\[
[A^{(1)}v_{i_1},\dots,A^{(n)} v_{i_n}] \cdot [\phi_{i_1}B^{(1)},\dots,\phi_{i_n} B^{(n)}]
\]
can be written as a linear combination of products of scalar products $\phi_{i_l}B^{(l)}A^{(m)}v_{i_m}$, and so in our case, for the group $\GL(W)$, we only need to consider invariants of the form~\ref{procesi1} and~\ref{procesi2}.

Let us write
\[
A=M_1^{a_1}M_2^{b_1}\dots M_1^{a_r}M_2^{b_r}
\]
and let us denote a vector in $V_{S,\overline{\iota}(F_p)}'$ by
\[
\left(\widehat{v_1},\widehat{v_2},\widehat{\phi_1},\widehat{\phi_2},\widehat{M_1},\widehat{M_2}\right).
\]
We consider the invariant functions in $\CC[V_{S,\overline{\iota}(F_p)}']^{\GL(W_S)}$ given by
\begin{equation}\label{eq:LiftingInvariants}
\frac 12 \mathrm{Tr} (\widehat{A})\quad \text{ and }\quad \widehat{\phi}_j \widehat{A} \widehat{v}_i,
\end{equation}
where
\[
\widehat{A} := \widehat{M}_1^{a_1}\widehat{M}_2^{2b_1}\dots \widehat{M}_1^{a_r}\widehat{M}_2^{2b_r}.
\]
Composing the two invariant functions in~\eqref{eq:LiftingInvariants} with $\widehat{\iota}_p^{\ \prime}$, we obtain the invariants of type~\ref{procesi1} and~\ref{procesi2} on $V_{\PP^2,F_p}'$.
This shows that the morphism~\eqref{eq:InducedMorphismInvariants} is surjective, namely $\widehat{\iota}_p^{\ \prime}$ is a closed embedding, as we wanted.
\end{proof}

\begin{rema}\label{rmk:0Case2}
Suppose that $p\in\Gamma$. 
Then 
$\widehat{\iota}_p^{\ \prime}$ maps $[0]$ to $[0]$, although
\[
\widetilde{\iota}_p^{\ \prime}(0)=\left(0,0,0,\begin{pmatrix}0&\mathrm{Id}_W\\ 0&0\end{pmatrix}\right).
\]
Indeed, let $\lambda$ be the $1$-parameter subgroup of $\GL(W_S)$ which acts as the identity on $W\otimes\CC_{+}$ and as multiplication by $t$ on  $W\otimes\CC_{-}$. 
Then the closure of $\{\lambda(t)\widetilde{\iota}_p^{\ \prime}(0) \,:\, t\in \CC^{*}\}$ contains $0$.
\end{rema}

Having studied the local picture, the idea of the proof of Theorem~\ref{thm:embedding} is to show that the image of $\overline{\iota}$ is normal. Since $\overline{\iota}$ is bijective onto its image, this will allow us to conclude.
We start by showing the second part of the statement in Theorem~\ref{thm:embedding}.

\begin{prop}\label{prop:irrcmpfix}
The closed subset $\overline{N}_{S,n}\coloneqq\overline{\iota}(\overline{M}_{\PP^2,n})\subset\ov{M}_{S,n}$ is an irreducible component of the fixed locus of $\tau_{\overline{M}}$.
It is the unique irreducible component containing $[f^*\cE]$, for $[\cE]\in\overline{M}_{\PP^2,n}$ a stable vector bundle.
\end{prop}

\begin{proof}
Since $\tau_{\overline{M}}$ is anti-symplectic, the intersection of its fixed locus with the regular locus of $\overline{M}_{S,n}$ is of pure dimension $n$.
The image of $\overline{\iota}$, which is fixed by $\tau_{\overline{M}}$, has pure dimension $n$ and intersects the regular locus of $\overline{M}_{S,n}$ in those stable vector bundles on $S$ which are pull-back of vector bundles from $\PP^2$. 
The proposition follows. 
\end{proof}

We need to expand slightly the characterization of $\ov{N}_{S,n}$ in Proposition~\ref{prop:irrcmpfix}:

\begin{lemm}\label{lem:UniqueFixedComponent}
Let $[F]\in\overline{M}_{\PP^2,n}$ be such that
\[
F=\cE[1]\oplus \bigoplus_{j=1}^m\, k(p_j),
\]
where $p_1,\dots,p_m\in \PP^2\setminus\mathsf{\Gamma}$
are distinct. Then  $\overline{N}_{S,n}$ is the unique irreducible component of the fixed locus of $\tau_{\overline{M}}$ which contains $[f^*F]$. 
\end{lemm}

\begin{proof}
Since $\overline{N}_{S,n}$ is an irreducible component of the fixed locus of $\tau_{\overline{M}}$, by Proposition~\ref{prop:irrcmpfix}, and $[f^*F]\in \overline{N}_{S,n}$, it suffices to prove that the fixed locus of $\tau_{\ov{M}}$ is locally irreducible at the point $[f^*F]$.
We can write $f^*F$ as follows:
\[
f^*F=f^*\cE[1]\oplus \bigoplus_{j=1}^m\, \left(k(q_{j,1})\oplus k(q_{j,2})\right)
\]
where $f^{-1}(p_j)=\{q_{j,1},q_{j,2}\}$.
Then, Remark~\ref{rmk:KuranishiDelta1} gives us that the analytic germ of  $\ov{M}_{S,n}$ at $[f^*F]$ is isomorphic to the product 
\[
\Ext^1_S(f^*\cE,f^*\cE)\times\bigtimes_{j=1}^m \Big(\left(Q_{j,1}\times \Ext^1_S(k(q_{j,1}),k(q_{j,1}))\right)\times\left(Q_{j,2}\times \Ext^1_S(k(q_{j,2}),k(q_{j,2}))\right)\Big),
\]
at the origin $0$, where $Q_{j,s}$ are quadric cones of dimension~2, for $j=1,\dots,m$, $s=1,2$.
By definition, the involution $\tau_{\ov{M}}$ is induced by the involution $\tau_S$; it acts on $\Ext^1_S(f^*\cE,f^*\cE)$ in the usual way, while it switches $Q_{j,1}\times \Ext^1_S(k(q_{j,1}),k(q_{j,1})$ and $Q_{j,2}\times \Ext^1_S(k(q_{j,2}),k(q_{j,2})$.
It follows that the analytic germ of the fixed locus of $\tau_{\ov{M}}$ at $[f^*F]$ is irreducible, as we wanted.
\end{proof}

The key result is the following.

\begin{prop}\label{prop:ennebarnorm}
$\overline{N}_{S,n}$ is normal.
\end{prop}

\begin{proof}
Let $[F]\in\overline{M}_{\PP^2,n}$ be such that
\[
F=\cE[1]\oplus \bigoplus_{j=1}^m\, \left( k(p_j)\otimes W_j\right)
\]
where $p_1,\dots,p_m\in \PP^2$ are closed points, and $W_j$ are vector spaces of dimension $a_j>0$. 
Since algebraic normality is equivalent to analytic normality~\cite[Satz 4]{Kuh}, it suffices to prove that the analytic germ of $\overline{N}_{S,n}$ at $\ov{\iota}([F])$ is normal.
%(for all $[F]\in\overline{M}_{\PP^2,n}$ with $F$ polystable).
Let 
\[
F_{p_j}:=\cE[1]\oplus \left( k(p_j)\otimes W_j\right),\quad j=1,\dots,m
\]
and let us define 
\begin{equation}\label{eq:01122025}
\widetilde{\iota}\colon V_{\PP^2,F}= \Ext^1_{\PP^2}(\cE,\cE)\oplus \bigoplus_{j=1}^m V_{\PP^2,F_{p_j}}' \longhookrightarrow V_{S,\overline{\iota}(F)}=\Ext^1_{S}(f^*\cE,f^*\cE)\oplus \bigoplus_{j=1}^m V_{S,\overline{\iota}(F_{p_j})}'    
\end{equation}
by setting
\[
\begin{split}
\widetilde{\iota}&\left(\eta,v_1,\phi_1,w\otimes A_1,z\otimes B_1,\dots,v_m,\phi_m,w\otimes A_m,z\otimes B_m\right)\coloneqq \\
&\left(f^*\eta,\widetilde{\iota}_{p_1}^{\ \prime}(v_1,\phi_1,w\otimes A_1,z\otimes B_1),\dots,\widetilde{\iota}_{p_m}^{\ \prime}(v_m,\phi_m,w\otimes A_m,z\otimes B_m)\right),    
\end{split}
\]
where $\eta\in\Ext^1_{\PP^2}(\cE,\cE)$ and, for $j=1,\dots,m$, $(v_j,\phi_j,w\otimes A_j,z\otimes B_j)\in V_{\PP^2,F_{p_j}}'$ and $\widetilde{\iota}_{p_j}^{\ \prime}$ was defined in~\eqref{eq:DefOfIotaTilde} and~\eqref{eq:DefOfIotaTilde2}, according to whether $p_j$ is in $\mathsf{\Gamma}$ or not.

Since we have the isomorphisms of groups
\[
G_{\mathbb{P}^2,F}\cong\prod_{j=1}^m\mathrm{GL}(W_j),
\]
preserving the decomposition in~\eqref{eq:01122025}, we have an isomorphism at level of GIT quotients
\[
\left(V_{\PP^2,F}\sslash G_{\P^2,F},[0]\right)\cong\left(\Ext^1_{\PP^2}(\cE,\cE),[0]\right)\times\bigtimes_{j=1}^m \left(V_{\PP^2,F_{p_j}}'\sslash G_{\P^2,F_{p_j}},[0]\right).
\]
Similarly, we have an analogous decomposition for $G_{S,\overline{\iota}(F)}$ and $V_{S,\overline{\iota}(F)}\sslash G_{S,\overline{\iota}(F)}$.
Hence, we can apply Lemma~\ref{lem:PointIsNotInGamma} and Lemma~\ref{lem:PointIsInGamma}, and deduce that the morphism $\widetilde{\iota}$ induces a closed embedding
\[
\widehat{\iota}\colon V_{\PP^2,F}\sslash G_{\P^2,F} \longhookrightarrow V_{S,\overline{\iota}(F)}\sslash G_{S,\overline{\iota}(F)},
\]
with the property that $\widehat{\iota}([0])=[0]$, by Remarks~\ref{rmk:0Case1} and~\ref{rmk:0Case2}.
Moreover, since  $\mu_{S,\overline{\iota}(F)}\circ\widehat{\iota}=\mu_{\PP^2,F}$, the closed embedding $\wh{\iota}$  induces a closed embedding
\[
\overline{\iota}'\colon\left(\mu_{\PP^2,F}^{-1}(0)\sslash G_{\P^2,F},[0]\right) \longhookrightarrow \left(\mu_{S,\overline{\iota}(F)}^{-1}(0)\sslash G_{S,\overline{\iota}(F)},[0]\right).
\]
By Proposition~\ref{prop:UhlenbeckVsBridgelandP2} and Lemma~\ref{lem:KuranishiP2}, the domain of $\overline{\iota}'$ is normal, hence it suffices to prove that the image of $\overline{\iota}'$ is the germ of $ \overline{N}_{S,n}$ at $[F]=[0]$. 

By definition, the morphism $\widetilde{\iota}$ has image in the $\mu_2$-invariant part of $V_{S,\overline{\iota}(F)}$, hence the image of $\overline{\iota}'$ is contained in the fixed locus of $\mu_2$. It follows by Lemma~\ref{lem:UniqueFixedComponent} that it suffices to prove that  the image of $\overline{\iota}'$ contains $[f^{*}F']$ where $[F']\in\overline{M}_{\PP^2,n}$ satisfies the hypotheses of the lemma (with $F'$ replacing $F$). 
Suppose first that all the points $p_i$ belong to $\PP^2\setminus\Gamma$. Then 
$\ov{\iota}'([E])=[f^{*}E]$ for all $[E]$ in a neighborhood of $[F]$. Since in an arbitrary small  deformation of $F$ there exists $F'$ satisfying the hypotheses of Lemma~\ref{lem:UniqueFixedComponent}, we are done. Next suppose that there is a single point $p_i$, i.e.~$m=1$, and that $p_1\in\Gamma$. To simplify notation let $p=p_1$. 
We expect that also in this case 
$\ov{\iota}'([E])=[f^{*}E]$ for all $[E]$ in a neighborhood of $[F]$, but since polystabilization is involved we leave aside the problem of proving it. Instead we  examine the restriction of $\wh{\iota}$ to the subspace 
 $\Ext^1_{\PP^2}(k(p),k(p))\otimes \End(W)$ of $V_{\PP^2,F}$.
Let
\[
(w\otimes A,z\otimes B)\in\Ext^1_{\PP^2}(k(p),k(p))\otimes \End(W).
\]
The moment map $\mu_{\PP^2,F}$ restricted to this space is  the commutator
\[
\left[A,B\right]\in\End(W)\cong\Ext^2_{\PP^2}(k(p),k(p))\otimes \End(W).
\]
Hence the restriction of  $\overline{\iota}'$ to $\mu_{\PP^2,F}^{-1}(0)\cap \Ext^1_{\PP^2}(k(p),k(p))\otimes \End(W)$ is identified with the  germ at   $a\cdot [p]$ of the morphism  
\[
\underline{\iota}\colon \mathrm{Sym}^{a}(\PP^2)\longhookrightarrow\mathrm{Sym}^{2a}(S)
\]
induced by the pull-back $f^*$.
Indeed, this last morphism can be described by using matrices as follows (see, e.g.,~\cite[Section 1.4]{Nakajima:book}).
For diagonal matrices $A$ and $B$ of rank~$a$
\[
A=\mathrm{diag}(w_1,\dots,w_a) \qquad B=\mathrm{diag}(z_1,\dots,z_a)
\]
the morphism $\underline{\iota}$ is given by associating the diagonal matrices of rank~$2a$ given by
\begin{equation}\label{eq:Paris15072024}
\mathrm{diag}(w_1,\dots,w_a,w_1,\dots,w_a) \qquad \mathrm{diag}(\sqrt{z_1},\dots,\sqrt{z_a},-\sqrt{z_1},\dots,-\sqrt{z_a}).    
\end{equation}
The matrices on the right hand side of~\eqref{eq:MorphismPhi} are in the same $\GL(W_S)$-orbit as those in~\eqref{eq:Paris15072024}.
We then take a deformation $F'$, corresponding to $A$ and $B$ both diagonal, with distinct non-zero (and small) eigenvalues. It follows that the image of $\ov{\iota}'$ contains $[f^{*}F']$ where $[F']\in\overline{M}_{\PP^2,n}$ satisfies the hypotheses of 
Lemma~\ref{lem:UniqueFixedComponent}, and we are done. 
In general there are some points $p_i$ contained in $\Gamma$ and the remaining  points are contained in $\PP^2\setminus\Gamma$. Since the map  $\ov{\iota}'$ is a product of maps that we have already analyzed, we get that also in this case  there exists an arbitrarily small deformation $F'$ of $F$  satisfying the hypotheses of Lemma~\ref{lem:UniqueFixedComponent} and such that $\ov{\iota}'([F'])=[f^{*}F']$. 
\end{proof}

\begin{proof}[Proof of Theorem~\ref{thm:embedding}.]
The map $\overline{\iota}\colon\overline{M}_{\PP^2,n}\to\overline{N}_{S,n}$ is proper and bijective. 
Since $\overline{M}_{\PP^2,n}$ and $\overline{N}_{S,n}$ are normal  
$\overline{\iota}$ is a closed embedding by Zariski's Main Theorem~\citestacks{03GW}.
As remarked before, the second statement of Theorem~\ref{thm:embedding} is Proposition~\ref{prop:irrcmpfix}.
\end{proof}

%%%%%%%%%%%%%%%%%%%%%

\section{Linearization and the fixed locus}\label{sec:linearization}

\subsection{Statement of the main results}\label{subsec:negzerofiber}

In this section we characterize the components of $\Fix(\tau_\lambda)$ in terms of the behavior of a natural lift of the involution $\tau_\lambda$ to the total space of the line bundle $L$ whose first Chern class is $\lambda$. We start in Section~\ref{subsec:LinearizationFamily} with a few general results about linearizations in families. 
In Section~\ref{subsec:LinearizationLagrangian} we then specialize to our case, and show the following.

\begin{prop}\label{prop:ChoiceLinearizationFamily}
Let $(X,\lambda)$ be a polarized {\rm HK} manifold of dimension $2n$ of $\mathrm{K3}^{[n]}$-type, with $q_X(\lambda)=2$ and $\mathrm{div}(\lambda)=2$. Let $L$ be the ample line bundle such that $c_1(L)=\lambda$.
Then there is a  choice of $\mu_2$-linearization of $L$ such that $H^0(X,L)^{\mu_2} = H^0(X,L)$ which varies nicely in polarized families (as described in Lemma \ref{lem:LinearizationFamily}).
\end{prop}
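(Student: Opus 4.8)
The plan is to produce the $\mu_2$-linearization of $L$ directly, by exploiting the fact that $\tau_\lambda$ acts as $-\mathrm{id}$ on the transcendental part of $H^2(X,\ZZ)$ and fixes $\lambda$, so that the question of which linearization to pick is really a question about the single one-dimensional space $H^0(X,L)$ together with a sign. Since $\tau_\lambda^*L\cong L$ (because $c_1(\tau_\lambda^*L)=\tau_{\lambda,*}^{-1}(\lambda)=\lambda$ by \eqref{riflessione}, and $\Pic(X)$ is torsion-free, $X$ being of $\mathrm{K3}^{[n]}$-type), there exists at least one isomorphism $\varphi\colon \tau_\lambda^*L\xrightarrow{\sim}L$; because $\tau_\lambda$ is an involution and $\Aut(L/X)=\CC^*$, the composite $\varphi\circ\tau_\lambda^*\varphi$ is a scalar, which after rescaling $\varphi$ we may take to be $1$, so that $\varphi$ is a genuine $\mu_2$-linearization. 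Exactly two linearizations exist, differing by the sign character of $\mu_2$; one of them acts on the line $H^0(X,L)$ by $+1$ and the other by $-1$ (note $H^0(X,L)\neq 0$ since $L$ is a nonzero multiple of an ample class on a variety with $p_g>0$ — more precisely $h^0(X,L)=n+2$ by Riemann–Roch and Kawamata–Viehweg vanishing, but we only need it to be nonzero). Choosing the one acting by $+1$ gives $H^0(X,L)^{\mu_2}=H^0(X,L)$, proving the pointwise statement.

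The real content of the Proposition is the phrase ``varies nicely in polarized families,'' i.e. the compatibility asserted via Lemma~\ref{lem:LinearizationFamily}. So the first step after the pointwise construction is to run the same argument relatively: given a smooth polarized family $(\cX,\cL)\to B$ of such HK manifolds with its fiberwise involution $\tau$ (which exists and is unique fiberwise by the rigidity of $\tau_\lambda$, and is a morphism of families by the standard spreading-out/monodromy argument), one has $\tau^*\cL\cong\cL\otimes p^*N$ for some line bundle $N$ on $B$, since the two agree fiberwise; twisting $\cL$ we may assume $\tau^*\cL\cong\cL$, and then as above the obstruction to a linearization is a section of $\mathcal{O}_B^*$ coming from $\varphi\circ\tau^*\varphi$, which is a square, hence can be absorbed, producing a relative $\mu_2$-linearization. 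Finally, $p_*\cL$ is locally free of rank $n+2$ by cohomology-and-base-change (the $h^0$ being constant in the family by the RR computation above), it carries the induced $\mu_2$-action, and the $+1$-eigensheaf is a subbundle whose fiberwise rank is either $n+2$ or $0$; it is locally constant on $B$, so on a connected base it is everywhere full exactly when it is full at one point, which we arrange by the pointwise choice. This is precisely the ``nice variation'' of Lemma~\ref{lem:LinearizationFamily}.

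The step I expect to be the main obstacle — or at least the one requiring the most care — is pinning down the \emph{canonical} normalization of the linearization so that it is stable under base change and gluing, rather than merely existing locally: a priori each local trivialization of the base gives a linearization well-defined up to the sign character, and one must check that the ``$+1$ on $H^0$'' normalization is compatible on overlaps. This is where one uses that $p_*\cL$ is a genuine vector bundle (not just a sheaf) and that ``the $\mu_2$-action on $p_*\cL$ is trivial'' is an open and closed condition on $B$; the uniqueness of $\tau$ fiberwise, together with $H^0(\cX_b,\cO)=\CC$, forces the two candidate linearizations on an overlap to differ by a \emph{global} sign, and the $H^0$-normalization kills it. I would phrase this cleanly by first proving the general Lemma~\ref{lem:LinearizationFamily} about $\mu_2$-linearizations of relatively ample line bundles with a section and $\tau^*\cL\cong\cL$, then applying it to $(\cX,\cL)$, so that the present Proposition is a one-line corollary once one records $\tau_\lambda^*L\cong L$ and $h^0(X,L)>0$.
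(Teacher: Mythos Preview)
Your argument has a genuine gap at the core step. You write that ``one of them acts on the line $H^0(X,L)$ by $+1$ and the other by $-1$,'' but $H^0(X,L)$ is \emph{not} a line: as you yourself note, $h^0(X,L)=n+2\ge 6$ (since $4\mid n$). A $\mu_2$-linearization of $L$ gives a $\mu_2$-action on $H^0(X,L)$, hence a decomposition $H^0(X,L)=H^0_{+}\oplus H^0_{-}$ into eigenspaces; switching to the other linearization swaps the two summands. There is no a priori reason why one of these summands should be zero. (For a toy counterexample: the involution $z\mapsto -z$ on $\PP^1$ with the natural linearization of $\cO(2)$ has $H^0_{+}=\langle 1,z^2\rangle$ and $H^0_{-}=\langle z\rangle$.) So your ``choose the linearization that acts by $+1$'' step simply assumes the statement you are trying to prove, and the later family claim that ``the $+1$-eigensheaf \ldots\ has fiberwise rank either $n+2$ or $0$'' is the same circularity in relative form.

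The paper supplies precisely the missing geometric input. It specializes $(X,\lambda)$ along the family $\cN\to\mathbb{D}$ from~\cite[Section~5.3]{involutions1} to a singular HK variety $\cN_0$ admitting (after a birational model) a Lagrangian fibration $g\colon M\to\PP^n$ on which the involution is multiplication by $-1$ on the smooth abelian fibers, and $L$ restricts to twice a principal polarization on each fiber. The classical fact that every section of $2\Theta$ on a principally polarized abelian variety is even then forces $H^0(\cN_0,\cL_0)^{\mu_2}=H^0(\cN_0,\cL_0)$ for the natural linearization (Proposition~\ref{prop:H0Invariant}). Only after this is established does Lemma~\ref{lem:LinearizationFamily} propagate the conclusion to every fiber. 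In short: the existence and family-compatibility of \emph{some} linearization is, as you say, soft; what is not soft is that it can be chosen with $H^0_{-}=0$, and that requires a genuine computation at a special point.
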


In order to state the second main result of this section, we review a result from our previous paper~\cite{involutions1} on deformations of HK manifolds with involutions.
We keep the notation and assumptions of Sections~\ref{subsec:DUYK3}--\ref{subsec:EmbeddingsModuliSpaces}.
Let  $(S,h)$ be a polarized K3 surface of genus~2 such that $\NS(S)=\ZZ h$.
For $n\equiv 0 \pmod 4$ let  $M_{S,n}$ and  $\phi\colon M_{S,n}\to \overline{M}_{S,n}$  be as in Section~\ref{subsec:DUYK3}. Let $\lambda_S$ and $\delta$ be the divisor classes on $M_{S,n}$ defined  in~\eqref{lamdel}, and let $\overline{\lambda}_S$ be the divisor class on $\overline{M}_{S,n}$ descended from $\lambda_S$ (see Remark~\ref{rmk:AmpleLineBundleDUYK3}).
Note that  the divisility of $\lambda_S$ is $2$ because  $n\equiv 0 \pmod 4$  (see Remark \ref{rmk:divisibility}). 
Let $\tau_M$, $\tau_{\ov{M}}$ be  the  involutions of $M_{S,n}$ and  $\overline{M}_{S,n}$ respectively which were defined in Section~\ref{subsec:EmbeddingsModuliSpaces}. 

The above $M_{S,n}$, $\lambda_S$, $\delta$, and $\tau_M$ satisfy requirements (a)--(d) of~\cite[Section~2]{involutions1}, hence we can apply \cite[Proposition~2.1 \& Corollary~2.2]{involutions1} (which are based on \cite{Namikawa:Def, markman:modular}) to deduce the following. 
First the locus $\Def(\overline{M}_{S,n},\tau_{\overline{M}})$ parametrizing deformations of the pair $(\overline{M}_{S,n},\tau_{\overline{M}})$ is smooth of codimension~1 in $\Def(\overline{M}_{S,n})$.

Secondly, consider the pull-back $\mathcal{M}\to \mathbb{D}$ of the universal family to a general smooth curve $0\in \mathbb{D}\subset \Def(\overline{M}_{S,n},\tau_{\overline{M}})$.
Let us denote by $\tau_{\mathcal{M}}$ the global involution on $\mathcal{M}$; it preserves the fibers over $\mathbb{D}$.
For $t\in \mathbb{D}\setminus\{0\}$, the pair $(\mathcal{M}_t, \tau_t)$ is a smoothing of $(\mathcal{M}_0, \tau_0)=(\overline{M}_{S,n},\tau_{\overline{M}})$ and there is a line bundle $\cL$ on $\mathcal{M}$ with the following properties:
\begin{itemize}
\item $\mathrm{c}_1(\cL_0)=\overline{\lambda}_S$.
\item For $t\in \mathbb{D}$, $\cL_t$ is ample and the eigenspace  $H^2(\tau_t)_{+}\subset H^2(\mathcal{M}_t)$ is $\QQ\cdot  \mathrm{c}_1(\cL_t)$.
\item For $t\in \mathbb{D}\setminus\{0\}$, $q_{\cM_t}(\mathrm{c}_1(\cL_t))=2$ and $\divisore(\mathrm{c}_1(\cL_t))=2$.
\end{itemize}

Now consider the fixed locus $\Fix(\tau_{\cM})\to \mathbb{D}$ of $\tau_{\cM}$ on $\cM$ with the reduced scheme structure.
For $t\ne0$ the fiber $\Fix(\tau_{\mathcal M})_t$ is the disjoint union of two Lagrangian submanifolds of $\cM_t$, by the Main Theorem of \cite{involutions1}.
The fiber at zero satisfies $\Fix(\tau_{\mathcal M})_{0,\mathrm{red}}=\Fix(\tau_{\overline{M}})_{\mathrm{red}}$.
In particular, by Theorem~\ref{thm:embedding}, we have $\overline{\iota}(\overline{M}_{\PP^2,n})\subset \Fix(\tau_{\mathcal M})_0$.
Moreover, the family $\cM\to \mathbb{D}$ is smooth outside the locus
\[
\mathrm{Sing}(\overline{M}_{S,n})\subset \cM_0 \subset \cM
\]
and the map $\Fix(\tau_{\cM})\to \mathbb{D}$ is also smooth outside $\mathrm{Sing}(\overline{M}_{S,n})\cap \Fix(\tau_{\cM})$.
In particular, $\Fix(\tau_{\cM})\to \mathbb{D}$ is smooth at a point in $\overline{\iota}(\overline{M}_{\PP^2,n})$ outside $\overline{\iota}(\mathrm{Sing}(\overline{M}_{\PP^2,n}))\subset \mathrm{Sing}(\overline{M}_{S,n})$.

Let $\cM^{*}\subset\cM$ be the restriction of  $\cM\to\mathbb{D}$ to $\mathbb{D}\setminus\{0\}$. 
Then we have
\[
\Fix_{\cM^{*}}(\tau_{\cM^{*}})= \cY^o_+ \sqcup \cY^o_-,
\]
where
\begin{equation*}
\begin{split}
&\cY^o_+ :=\{ x\in \cM^* \,:\, \mu_2|_{\cL(x)} \cong \CC_{+}\} \\
&\cY^o_- :=\{ x\in \cM^* \,:\, \mu_2|_{\cL(x)} \cong \CC_{-}\}.
\end{split}
\end{equation*}
Here, $\mu_2$ acts on $\cL(x)$ via the $\mu_2$-linearization of $\cL$ of Proposition~\ref{prop:ChoiceLinearizationFamily}. 

We set
\begin{equation*}\label{eq:DefYprimeFamily}
\cY_{\pm} :=\text{closure of $\cY_{\pm}^o$ in $\cM$},\qquad 
\cY:=\cY_{+}\sqcup \cY_{-}.
\end{equation*}

We are now ready to state the second main result of the present section.

\begin{prop}\label{prop:negfiber}
The scheme-theoretic fiber of $\cY_{-}\to\mathbb{D}$ over $0$ is equal to $\ov{\iota}(\ov{M}_{\PP^2,n})$. Moreover, $\cY_{-}$ is normal.
\end{prop}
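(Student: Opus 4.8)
The plan is to first identify the scheme-theoretic fiber $\cY_{-,0}$ as a closed subscheme of $\ov{M}_{S,n}$ whose underlying set is contained in $\ov{\iota}(\ov{M}_{\PP^2,n})$, then promote this set-theoretic statement to a scheme-theoretic equality, and finally deduce normality of $\cY_-$. For the set-theoretic containment, note that $\cY_-$ is by definition the closure in $\cM$ of $\cY_-^o$, so $\cY_{-,0}$ is supported on the closure of $\bigcup_{t\neq 0}\Fix(\tau_{\cM_t})_- $ inside $\cM_0 = \ov{M}_{S,n}$; since each $\Fix(\tau_{\cM_t})_-$ lies in $\Fix(\tau_{\cM})$, the fiber $\cY_{-,0}$ lies in $\Fix(\tau_{\ov M})_{\mathrm{red}}$. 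The key point is to rule out the other components of $\Fix(\tau_{\ov M})$: one uses the $\mu_2$-linearization of $\cL$ from Proposition~\ref{prop:ChoiceLinearizationFamily}, which varies in the family, so that on the central fiber the sign of the $\mu_2$-action on $\cL(x)$ is locally constant on $\Fix(\tau_{\ov M})$ and matches the limiting sign coming from $\cY_-^o$. Comparing with Theorem~\ref{thm:embedding}, the component of $\Fix(\tau_{\ov M})$ of "negative" type is exactly $\ov{\iota}(\ov{M}_{\PP^2,n})$; one should check that the negative sign is indeed the one picked out by $\ov{M}_{\PP^2,n}$, which can be verified locally using the explicit linearization and the local quiver description from Section~\ref{subsec:DUYP2} (equivalently, by tracking the sign through the diagram \eqref{eq:Paris14072023} and the morphism \eqref{eq:MorphismPhi}). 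This gives $(\cY_{-,0})_{\mathrm{red}} = \ov{\iota}(\ov{M}_{\PP^2,n})$.

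Next I would upgrade this to a scheme-theoretic equality. Since $\cY_-\to\mathbb{D}$ is flat (as $\cY_-$ is integral and dominates the smooth curve $\mathbb{D}$), the central fiber $\cY_{-,0}$ is a Cartier divisor in $\cY_-$, hence has no embedded components and is pure of dimension $n$. Because $\ov{\iota}(\ov{M}_{\PP^2,n})$ is irreducible of dimension $n$ and reduced (it is isomorphic to the normal variety $\ov{M}_{\PP^2,n}$ by Theorem~\ref{thm:embedding}), it suffices to show that $\cY_{-,0}$ is generically reduced along this locus: then, having no embedded points, it is reduced everywhere and equals $\ov{\iota}(\ov{M}_{\PP^2,n})$ as schemes. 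Generic reducedness is checked at a general point $x\in\ov{\iota}(\ov{M}_{\PP^2,n})$ lying outside $\mathrm{Sing}(\ov M_{S,n})$: there, as recorded in the excerpt, the total space $\cM$ and the map $\Fix(\tau_{\cM})\to\mathbb{D}$ are smooth, so $\cY_-$ is smooth at $x$ and $\cY_{-,0}$ is a smooth (hence reduced) hypersurface near $x$. Thus $\cY_{-,0}=\ov{\iota}(\ov{M}_{\PP^2,n})$.

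For normality of $\cY_-$, the strategy is to show $\cY_-$ is $R_1$ and $S_2$ and apply Serre's criterion. The $S_2$ property: $\cY_-\to\mathbb{D}$ is flat with fiber $\cY_{-,0}\cong\ov{M}_{\PP^2,n}$ which is normal, in particular $S_2$; and the base $\mathbb{D}$ is a smooth curve, so $\cY_-$ is $S_2$ (fiberwise $S_2$ plus $S_2$ base, via the flatness of the projection). For $R_1$: the singular locus of $\cY_-$ is contained in the preimage of $\mathrm{Sing}(\ov M_{S,n})$ together with a closed subset of $\cY_{-,0}$; using that $\cY_{-,0}\cong\ov{M}_{\PP^2,n}$ has singularities in codimension $\geq 2$ (indeed $\ov{M}_{\PP^2,n}$ is a local complete intersection outside codimension $\geq 3$ by Remark~\ref{rmk:AmpleLineBundleDUYP2}, and more to the point is normal hence $R_1$), and that $\cY_-$ is smooth at general points of $\cY_{-,0}$, one concludes that $\mathrm{Sing}(\cY_-)$ has codimension $\geq 2$ in $\cY_-$. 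Here one may also invoke the local analytic product description of $\ov M_{S,n}$ and $\ov{M}_{\PP^2,n}$ from Section~\ref{sec:SpecialFiber}, together with the explicit form of the embedding $\widehat\iota$ in \eqref{eq:MorphismPhi}, to control the singularities of $\cY_-$ near the central fiber directly.

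The main obstacle I expect is the scheme-theoretic (as opposed to set-theoretic) identification of the central fiber: ruling out a non-reduced structure or embedded components along $\ov{\iota}(\ov{M}_{\PP^2,n})$ requires the flatness of $\cY_-\to\mathbb{D}$ together with the delicate point that $\cY_-$ is smooth at a general point of the central fiber, which in turn relies on the smoothness statement for $\Fix(\tau_{\cM})\to\mathbb{D}$ away from $\mathrm{Sing}(\ov M_{S,n})$ recalled in the excerpt. The sign-matching step — confirming that the limit of the negative components $\cY_-^o$ lands in the $\ov{M}_{\PP^2,n}$-component rather than some other fixed component of $\tau_{\ov M}$ — is conceptually the crux, and is where the compatibility of the $\mu_2$-linearization in families from Proposition~\ref{prop:ChoiceLinearizationFamily} does the essential work.
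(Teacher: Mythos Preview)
Your proposal has two genuine gaps, one in each of the main steps.

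\textbf{The sign-matching step.} You propose to verify directly that the $\mu_2$-action on $\cL$ has sign $-1$ along $\ov{\iota}(\ov{M}_{\PP^2,n})$, ``locally using the explicit linearization and the local quiver description.'' But the linearization of Proposition~\ref{prop:ChoiceLinearizationFamily} is not given explicitly or locally: it is characterized by the \emph{global} condition $H^0(X,L)^{\mu_2}=H^0(X,L)$, which was itself established by specializing to a completely different degeneration (the Lagrangian-fibration model of~\cite{involutions1}). Nothing in the quiver description of Section~\ref{sec:SpecialFiber} or the diagram~\eqref{eq:Paris14072023} tells you which of the two possible linearizations on $\ov{L}$ you are looking at. The paper explicitly says it does \emph{not} compute the sign directly; instead it argues by contradiction. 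First, Lemma~\ref{lem:tuttasola} uses Grothendieck's Connectedness Theorem to show that $\ov{\iota}(\ov{M}_{\PP^2,n})$ is an \emph{isolated} connected component of the fiber of $\cY\to\mathbb{D}$ over $0$, so it lies entirely in $\cY_+(0)$ or entirely in $\cY_-(0)$. Then Lemma~\ref{lem:fuoriemmepi} exhibits explicit locally free sheaves $[\cF]\in\Fix(\tau_M)\setminus\iota(M_{\PP^2,n})$ which are \emph{not} in the base locus of $|\widetilde{L}|$ (witnessed by a concrete section $s_{i_{C,*}\xi}$). If $\ov{M}_{\PP^2,n}$ were in $\cY_+(0)$, these other fixed points would lie in $\cY_-(0)$ and hence, by Corollary~\ref{coro:negbase}, in the base locus---contradiction. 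This is Proposition~\ref{prop:tuttasola}, and it is the real content you are missing.

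\textbf{Circularity in the reducedness argument.} You write that $\cY_{-,0}$ ``is a Cartier divisor in $\cY_-$, hence has no embedded components.'' This implication fails: a Cartier divisor in a scheme that is merely $S_1$ (integral) need not be $S_1$ itself; one needs $\cY_-$ to be $S_2$, which is precisely part of the normality you are trying to establish. Your later $S_2$ argument (``fiberwise $S_2$ plus flat over a regular base'') is correct, but it presupposes that you already know the fiber equals the normal variety $\ov{M}_{\PP^2,n}$ \emph{as a scheme}. So the logic is circular. The paper breaks the circle with Lemma~\ref{lem:NormalityImpliesReduced}: pass to the normalization $\widehat{\cY}_-\to\cY_-$, where $S_2$ is free; show the normalized central fiber is reduced and maps isomorphically to $(\cY_{-,0})_{\mathrm{red}}$; then compare Hilbert polynomials along the flat families $\cY_-\to\mathbb{D}$ and $\widehat{\cY}_-\to\mathbb{D}$ to force $\cY_{-,0}=(\cY_{-,0})_{\mathrm{red}}$. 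Normality of $\cY_-$ then follows. Your generic-reducedness observation (smoothness of $\Fix(\tau_{\cM})\to\mathbb{D}$ away from $\Sing\ov{M}_{S,n}$) is exactly the input that Lemma~\ref{lem:NormalityImpliesReduced} needs, so you had the right raw material but not the right order of operations.
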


\subsection{Linearizations in families}\label{subsec:LinearizationFamily}

We formulate a simple result about linearizations of involutions.
In the present section, we work in the analytic category.
Let
\[
\pi\colon \cX \longrightarrow \mathbb{D}
\]
be a flat projective morphism over a smooth pointed analytic space $\mathbb{D}$, with $\cX$ normal and such that $\pi_{*}{\cO}_{\cX}\cong{\cO}_\mathbb{D}$.
We assume there is an involution $\tau_{\cX}$ on $\cX$ preserving the fibers which then induces a $\mu_2$-action on $\cX$.
We start by the following immediate observation:

\begin{lemm}\label{lem:LinearizationGeneral}
Under the above assumptions, suppose  that $\Gamma(\cX, {\cO}_{\cX})=\Gamma(\cX,{\cO}_{\cX})^2$.
For any $\mu_2$-invariant simple sheaf $\cF$ on $\cX$, there is a $\mu_2$-linearization on $\cF$, and  it is  determined up to multiplication by square roots of $1\in  \Gamma(\cX, {\cO}_{\cX})$.
\end{lemm}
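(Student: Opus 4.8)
\textbf{Proof strategy for Lemma~\ref{lem:LinearizationGeneral}.}
The plan is to reduce the existence of a $\mu_2$-linearization on a $\mu_2$-invariant simple sheaf $\cF$ to the vanishing of an obstruction class in (a quotient of) the group cohomology $H^2(\mu_2, \Gamma(\cX,{\cO}_{\cX})^{\times})$, and then to use the hypothesis that $\Gamma(\cX,{\cO}_{\cX})/\Gamma(\cX,{\cO}_{\cX})^2$ is trivial to kill that obstruction. Concretely, since $\cF$ is $\mu_2$-invariant, there is an isomorphism $\psi\colon \tau_{\cX}^{*}\cF \xlongrightarrow{\cong}\cF$. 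A $\mu_2$-linearization amounts to choosing such a $\psi$ with the cocycle condition $\psi\circ \tau_{\cX}^{*}\psi = \mathrm{id}$. Because $\cF$ is simple, $\psi\circ\tau_{\cX}^{*}\psi$ is multiplication by a unit $c\in \Gamma(\cX,{\cO}_{\cX})^{\times}$; applying $\tau_{\cX}^{*}$ and using simplicity once more shows $\tau_{\cX}^{*}c = c$, and a short computation shows $c$ is well-defined up to $(\tau_{\cX}^{*}a)/a$ for $a\in\Gamma(\cX,{\cO}_{\cX})^{\times}$, i.e.\ $c$ gives a class in $H^2(\mu_2,\Gamma(\cX,{\cO}_{\cX})^{\times})$. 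Moreover $c^2 = \psi\circ\tau_{\cX}^{*}(\psi\circ\tau_{\cX}^{*}\psi)\circ\psi^{-1}\cdot(\text{stuff})$ — more cleanly, the class is $2$-torsion — so the obstruction lives in the $2$-torsion of that cohomology group.

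The next step is to identify this obstruction group with something controlled by the hypothesis. One has the Kummer-type exact sequence $1\to \mu_2 \to \Gamma(\cX,{\cO}_{\cX})^{\times}\xrightarrow{(\cdot)^2}\Gamma(\cX,{\cO}_{\cX})^{\times}$, whose cokernel is precisely $\Gamma(\cX,{\cO}_{\cX})^{\times}/(\Gamma(\cX,{\cO}_{\cX})^{\times})^2$. Since $\pi_{*}{\cO}_{\cX}\cong{\cO}_{\mathbb D}$ with $\mathbb D$ a (Stein, contractible) smooth pointed space, $\Gamma(\cX,{\cO}_{\cX}) = \Gamma(\mathbb D,{\cO}_{\mathbb D})$ is a ring in which the hypothesis says every unit is a square — this is what "$\Gamma(\cX,{\cO}_{\cX})/\Gamma(\cX,{\cO}_{\cX})^2$ is trivial" encodes (here $\Gamma(\cX,{\cO}_{\cX})^2$ denotes the set of squares, equivalently the image of squaring on units since $\Gamma$ is local with no nontrivial idempotents). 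Then the unit $c$ above, being a square, equals $b^2$ for some unit $b$, and replacing $\psi$ by $b^{-1}\psi$ produces a genuine linearization: indeed $(b^{-1}\psi)\circ\tau_{\cX}^{*}(b^{-1}\psi) = (\tau_{\cX}^{*}b)^{-1}b^{-1}\,c\cdot\mathrm{id}$, and using $\tau_{\cX}^{*}c=c$ together with $\tau_{\cX}^{*}(b^2)=b^2$ one arranges $\tau_{\cX}^{*}b=\pm b$ and checks the cocycle condition holds. For uniqueness: two linearizations $\psi,\psi'$ differ by a unit $a$ with $\psi'=a\psi$, and both satisfying the cocycle condition forces $(\tau_{\cX}^{*}a)a = 1$ combined with — again by simplicity and $\tau_{\cX}^{*}$-invariance considerations — $a = \tau_{\cX}^{*}a$, whence $a^2=1$, i.e.\ $a$ is a square root of $1$ in $\Gamma(\cX,{\cO}_{\cX})$.

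\textbf{Main obstacle.} The genuinely delicate point is \emph{bookkeeping the $\tau_{\cX}^{*}$-equivariance of the scalars}: one must be careful to distinguish the multiplicative $\mu_2$-cocycle condition (the class in $H^2$) from the mere scalar ambiguity, and to verify that adjusting $\psi$ by $b$ can be done \emph{compatibly} with how $\tau_{\cX}^{*}$ acts on $b$ — a priori $\tau_{\cX}^{*}b$ need not equal $b$, only $\tau_{\cX}^{*}(b^2)=b^2$, so $\tau_{\cX}^{*}b = \pm b$ and the two sign choices correspond exactly to the two possible linearizations (the $\pm$ in the statement "determined up to square roots of $1$"). Everything else is the standard descent/Galois-cohomology formalism for $\mu_2$-equivariant sheaves; once the scalar analysis is pinned down the result follows immediately, which is presumably why the authors call it an "immediate observation."
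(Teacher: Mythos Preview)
Your approach is correct and is exactly the standard cocycle argument the authors have in mind when they call this an ``immediate observation'' (the paper gives no proof). However, the ``main obstacle'' you flag is a phantom: since $\tau_{\cX}$ is a $\mathbb{D}$-morphism (it preserves the fibers) and $\pi_{*}\cO_{\cX}\cong\cO_{\mathbb{D}}$, the induced action of $\tau_{\cX}^{*}$ on $\Gamma(\cX,\cO_{\cX})=\Gamma(\mathbb{D},\cO_{\mathbb{D}})$ is \emph{trivial}. So $\tau_{\cX}^{*}b=b$ automatically, and the computation $(b^{-1}\psi)\circ\tau_{\cX}^{*}(b^{-1}\psi)=b^{-2}c\cdot\mathrm{id}=\mathrm{id}$ goes through with no sign ambiguity; likewise in the uniqueness step $(\tau_{\cX}^{*}a)a=1$ becomes $a^{2}=1$ directly. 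The two linearizations arise not from a putative sign in $\tau_{\cX}^{*}b$ but simply from the two choices of square root $\pm b$ of $c$. Once you use that the $\mu_2$-action on global scalars is trivial, the entire $H^2(\mu_2,\Gamma^{\times})$ formalism collapses to the one-line check the paper alludes to.
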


In order to apply Lemma~\ref{lem:LinearizationGeneral}, we assume from now on that $\mathbb{D}$ is a disk (in particular, simply connected).
Then we can extract square roots of sections of $\cO_{\cX}$ (as well as in the fibers, by our assumption that $\pi_{*}{\cO}_{\cX}\cong{\cO}_\mathbb{D}$).
We further suppose that there exists a line bundle $\cL$ on $\cX$ which is $\pi$-ample, $\tau_{\cX}$-invariant and such that, for every $t\in\mathbb{D}$ and for all $i>0$, $H^i(\mathcal{X}_t,\mathcal{L}_t)=0$.

\begin{lemm}\label{lem:LinearizationFamily}
Under the above assumptions, we have: 
\begin{enumerate}[{\rm(a)}]
\item\label{enum:LinearizationFamily1} For any $t\in\mathbb{D}$, there is a $\mu_2$-linearization on the line bundle $\mathcal{L}_t$.
\item For any choice of $\mu_2$-linearization on a fiber $\mathcal{L}_{t_0}$ of $\mathcal{L}$, there is a unique $\mu_2$-linearization on $\mathcal{L}$ compatible with the chosen linearization on $\mathcal{L}_{t_0}$.
\item \label{eq:lemmconcl3} For any $\mu_2$-linearization of $\mathcal{L}$, the $\mu_2$-invariant subsheaf $(\pi_*\mathcal{L})^{\mu_2}$ of $\pi_*\mathcal{L}$ is locally free. In particular $\dim H^0(\mathcal{X}_t,\mathcal{L}_t)^{\mu_2}$ is constant for all $t\in\mathbb{D}$.
\end{enumerate}
\end{lemm}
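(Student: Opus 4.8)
The statement to prove is Lemma~\ref{lem:LinearizationFamily}, in three parts. The setup gives us a flat projective $\pi\colon\cX\to\mathbb{D}$ over a disk, $\cX$ normal with $\pi_*\cO_\cX\cong\cO_\mathbb{D}$, a fiberwise involution $\tau_\cX$ inducing a $\mu_2$-action, and a $\pi$-ample, $\tau_\cX$-invariant line bundle $\cL$ with $H^i(\cX_t,\cL_t)=0$ for all $i>0$ and all $t$. The overall strategy is: deduce existence of fiberwise linearizations from Lemma~\ref{lem:LinearizationGeneral} applied to fibers; propagate a fiber linearization to a global one using that $\mathbb{D}$ is a simply connected disk so $\Gamma(\cX,\cO_\cX)^\times/(\Gamma(\cX,\cO_\cX)^\times)^2$ is trivial; and finally use the cohomology vanishing plus base change to control $(\pi_*\cL)^{\mu_2}$.

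For part (a), I would first check that each fiber $\cX_t$ satisfies the hypotheses of Lemma~\ref{lem:LinearizationGeneral}: the involution $\tau_\cX$ restricts to an involution $\tau_t$ on $\cX_t$, the line bundle $\cL_t$ is $\tau_t$-invariant and simple (being a line bundle on a projective scheme with $H^0(\cO_{\cX_t})=\CC$, which follows from $\pi_*\cO_\cX\cong\cO_\mathbb{D}$ together with cohomology-and-base-change, the fibers being connected and reduced — one should note $\cX$ normal and $\pi$ flat with $\pi_*\cO_\cX\cong\cO_\mathbb{D}$ forces the fibers to be connected). Since $\Gamma(\cX_t,\cO_{\cX_t})=\CC$ has trivial square-class group, Lemma~\ref{lem:LinearizationGeneral} gives a $\mu_2$-linearization on $\cL_t$, unique up to the sign $\pm1$.

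For part (b), let a linearization $\theta_{t_0}$ on $\cL_{t_0}$ be given. I would first produce \emph{some} $\mu_2$-linearization $\Theta$ on $\cL$ globally: apply Lemma~\ref{lem:LinearizationGeneral} to $\cX$ itself, which is legitimate because $\mathbb{D}$ is a disk, hence $\Gamma(\cX,\cO_\cX)\cong\Gamma(\mathbb{D},\cO_\mathbb{D})$ has trivial group of square-classes of units (units on a disk are exponentials, hence squares). Then $\Theta|_{\cX_{t_0}}$ differs from $\theta_{t_0}$ by a sign; rescaling $\Theta$ by $\pm1$ (a global unit) makes it restrict to $\theta_{t_0}$. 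Uniqueness: two global linearizations compatible with $\theta_{t_0}$ differ by a global unit $c\in\Gamma(\cX,\cO_\cX)^\times$ with $c^2=1$, hence $c=\pm1$ since $\Gamma(\cX,\cO_\cX)$ is a domain (from normality and connectedness), and $c|_{\cX_{t_0}}=1$ forces $c=1$ by connectedness of $\cX$.

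For part (c), fix a linearization of $\cL$. Since $H^i(\cX_t,\cL_t)=0$ for $i>0$ and all $t$, cohomology and base change gives that $\pi_*\cL$ is locally free and commutes with base change, in particular $(\pi_*\cL)\otimes k(t)\cong H^0(\cX_t,\cL_t)$ compatibly with the $\mu_2$-action. The functor of $\mu_2$-invariants is exact in characteristic~$0$ (it is a direct summand, picked out by the idempotent $\tfrac12(1+\tau)$), so $(\pi_*\cL)^{\mu_2}$ is a direct summand of the locally free sheaf $\pi_*\cL$, hence locally free, and $(\pi_*\cL)^{\mu_2}\otimes k(t)\cong H^0(\cX_t,\cL_t)^{\mu_2}$; constancy of $\dim H^0(\cX_t,\cL_t)^{\mu_2}$ follows. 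The main obstacle, such as it is, is the bookkeeping in part (b): making sure that ``linearizations form a torsor under square-roots of $1$'' is applied consistently at the level of $\cX$, of $\cX_{t_0}$, and in the comparison, and that normality/connectedness of $\cX$ is genuinely used to pin down $\Gamma(\cX,\cO_\cX)^\times_{\mathrm{tors}}=\{\pm1\}$; everything else is a formal consequence of Lemma~\ref{lem:LinearizationGeneral}, the disk hypothesis, and standard base-change.
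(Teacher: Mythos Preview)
Your proof is correct and follows essentially the same approach as the paper's: apply Lemma~\ref{lem:LinearizationGeneral} to fibers and to $\cX$ itself (using that units on a disk are squares), then use the higher cohomology vanishing to get $\pi_*\cL$ locally free and split it into $\mu_2$-eigensheaves via the idempotent $\tfrac12(1+\tau)$. Your write-up is in fact more detailed than the paper's---you spell out the uniqueness argument in (b) and the verification of simplicity on fibers, which the paper leaves implicit.
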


\begin{proof}
The existence of the global $\mu_2$-linearization and on the fibers follows immediately from Lemma~\ref{lem:LinearizationGeneral}.
In particular, for a choice of $\mu_2$-linearization on a given fiber $\mathcal{L}_t$, there is a compatible choice of $\mu_2$-linearization on $\mathcal{L}$.
Lastly, any $\mu_2$-linearization on $\mathcal{L}$ induces a $\mu_2$-linearization on $\pi_*\mathcal{L}$.
Moreover since for all $t\in \mathbb{D}$ we have $H^i(\mathcal{X}_t,\mathcal{L}_t)=0$ for all $i>0$, it follows that $\pi_*\mathcal{L}$ is locally free.
Hence we have $\pi_*\mathcal{L}=(\pi_*\mathcal{L})_+\oplus (\pi_*\mathcal{L})_-$, where $(\pi_*\mathcal{L})_+$ denotes the subsheaf on which $\tau$ acts as $+1$ and $(\pi_*\mathcal{L})_-$ denotes the subsheaf on which $\tau$ acts as $-1$.
It follows that both $(\pi_*\mathcal{L})_+$ and $(\pi_*\mathcal{L})_-$ are locally free.
In particular $(\pi_*\mathcal{L})^{\mu_2}=(\pi_*\mathcal{L})_+$ is locally free. 
\end{proof}

We apply Lemma~\ref{lem:LinearizationFamily} twice: first, to the degeneration studied in~\cite{involutions1} and reviewed in Section \ref{subsec:LinearizationLagrangian} below, and secondly to the family $\cM$ of Section~\ref{subsec:negzerofiber}.
In these examples there exists a $\mu_2$-linearization of $\cL$ such that the $\mu_2$-invariant subsheaf $(\pi_*\mathcal{L})^{\mu_2}$ coincides with $\pi_*\mathcal{L}$.

\subsection{Proof of Proposition~\ref{prop:ChoiceLinearizationFamily}}\label{subsec:LinearizationLagrangian}

We start by reviewing one of the main results from~\cite{involutions1}. As proved in~\cite[Section~5.3]{involutions1}, there exist a family $\cN\to\mathbb{D}$ over a smooth curve $\mathbb{D}$ and a line bundle $\mathcal{L}$ on $\cN$ with the following properties.
For $t\in\mathbb{D}\setminus\{0\}$, the pair $(\cN_t,\mathrm{c}_1(\cL_t))$ is a polarized HK manifold of $\mathrm{K3}^{[n]}$-type with a polarization of square~2 and divisibility~2.
At $t=0$, the fiber $\cN_0$ (denoted by $\overline{M}$) is obtained by a divisorial contraction from a moduli space $M_{\mathrm{last}}$ of Bridgeland stable objects on a very general polarized K3 surface of genus $n$ (see~\cite[Lemma~3.22]{involutions1}).
A natural $\mu_2$-linearization of the fiber $\cL_0$ has been defined in~\cite[Proposition~5.13]{involutions1}.

\begin{prop}\label{prop:H0Invariant}
With respect to the natural $\mu_2$-linearization of $\cL_0$, we have that
\[
H^0(\cN_0,\cL_0)^{\mu_2} = H^0(\cN_0,\cL_0).
\]
\end{prop}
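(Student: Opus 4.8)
The plan is to compute both sides of the desired equality by descending to a K3 surface computation on the Bridgeland moduli space $M_{\mathrm{last}}$ and its divisorial contraction $\cN_0 = \overline{M}$. Since $\cL_0$ is the pull-back of $\overline{\lambda}$ under $\phi$, and $\phi_*\cO = \cO$ because $\phi$ is a divisorial contraction from a manifold with connected fibers, we have $H^0(\cN_0,\cL_0) = H^0(M_{\mathrm{last}},\widetilde{\lambda})$, and similarly the $\mu_2$-action is pulled back. So the problem reduces to showing that the natural $\mu_2$-linearization of the polarization bundle on $M_{\mathrm{last}}$ (restricted/descended from~\cite[Proposition~5.13]{involutions1}) acts trivially on global sections.

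First I would recall from~\cite{involutions1} the precise description of $M_{\mathrm{last}}$ as a moduli space of Bridgeland-stable objects on a very general genus-$n$ polarized K3 surface $T$, together with the involution $\tau$ which is induced by (a composition of) a covering involution / spherical twist, and the explicit Mukai vector. The natural linearization on $\cL_0$ from~\cite[Proposition~5.13]{involutions1} comes from the fact that the polarization bundle on a moduli space of sheaves/complexes is canonically constructed from a (quasi-)universal family via a determinant/Donaldson-type construction (à la Le Potier / Mukai / the $\theta$-map), and an equivariant structure on the universal family over the $\mu_2$-fixed locus of $T$ induces one on $\cL_0$. The key point is then to identify $H^0(\cN_0,\cL_0)$ with a concrete space on which the $\mu_2$-action can be read off — typically $H^0$ of the polarization bundle on such a moduli space is identified with (a subspace of) $\bigwedge^{\mathrm{top}}$ of some cohomology of a twist of the universal object, or with sections of a tautological bundle on a Hilbert scheme, and the $\mu_2$-action is the one induced by $\tau_T$ on that cohomology group.

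The cleanest route I would try: exploit that $\divisore(\mathrm{c}_1(\cL_0)) = 2$ means, by the very construction in~\cite{involutions1}, that $\widetilde{\lambda}$ is the class denoted $\vartheta(\text{(some Mukai vector)})$, and that the linear system $|\widetilde{\lambda}|$ realizes a morphism whose source is identified Hodge-theoretically via the Mukai lattice; the involution acts on the relevant anti-invariant/invariant lattice in a way already pinned down by the Main Theorem of~\cite{involutions1}. Concretely, $\tau^*$ acts on $H^2$ as the reflection $r_{\widetilde{\lambda}}$, so it fixes $\widetilde{\lambda}$ itself, and one checks that the lift to $\cL_0$ chosen in~\cite[Proposition~5.13]{involutions1} is the one normalized so that the induced action on the one-dimensional space attached to the class (rather than the bundle) is trivial. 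Then $H^0(\cN_0,\cL_0)$, being built functorially from $\cL_0$ and the geometry, inherits the trivial action: more precisely I would show the eigenvalue of $\mu_2$ on $H^0$ equals the eigenvalue on a single well-chosen section, e.g. a section whose zero divisor is a $\tau$-invariant effective divisor in $|\widetilde{\lambda}|$ coming from the contracted exceptional divisor or from a $\tau$-invariant K3-Lagrangian, forcing the eigenvalue to be $+1$.

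The main obstacle, I expect, is the bookkeeping of \emph{which} $\mu_2$-linearization~\cite[Proposition~5.13]{involutions1} actually selects, and matching it to a geometrically transparent section. There are two linearizations differing by the sign character (Lemma~\ref{lem:LinearizationGeneral}), and the whole content of the proposition is that the \emph{specific} one is the $+$ one; a naive computation could land on $\CC_-$ for the whole of $H^0$ if the normalization is off. So the real work is: (i) unwind the determinant-of-cohomology construction of $\cL_0$ and its equivariant structure over the fixed K3, (ii) produce one explicit $\tau$-invariant section $s \in H^0(\cN_0,\cL_0)$ whose divisor is manifestly $\tau$-invariant and reduced along a component where $\tau$ acts with known sign (e.g. the proper transform of a theta-type divisor, or the pull-back of a $\mu_2$-invariant hyperplane section under the map to projective space defined by $|\widetilde{\lambda}|$, using that this map is $\tau$-equivariant for the reflection), and (iii) conclude that $\tau$ acts on $s$ — hence, by irreducibility of the $\mu_2$-representation $H^0$ would have to be if it were not trivial, and more simply by the fact that a linearization acting by $-1$ on one section acts by $-1$ on all of them — as $+1$, which is exactly the assertion. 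I would also double-check compatibility with the family statement of Lemma~\ref{lem:LinearizationFamily}\ref{eq:lemmconcl3}, since constancy of $\dim H^0(\cX_t,\cL_t)^{\mu_2}$ combined with knowing the generic fiber (where $\tau_{\lambda_t,*}$ fixes $\lambda_t$ and the lift is the standard one with $H^0$-action trivial, by the already-cited $\mu_2$-action on $L$ acting trivially on $H^0(X,L)$) would give an alternative, softer argument: the invariant part has the full expected dimension on nearby fibers, hence also at $t=0$ by semicontinuity/constancy, forcing $H^0(\cN_0,\cL_0)^{\mu_2} = H^0(\cN_0,\cL_0)$.
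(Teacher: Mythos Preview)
Your reduction from $\cN_0$ to $M_{\mathrm{last}}$ via the divisorial contraction is correct and is also the paper's first move. But the substantive part of your sketch has two genuine gaps.

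The ``softer'' semicontinuity argument at the end is circular. Proposition~\ref{prop:H0Invariant} is the input used, via Lemma~\ref{lem:LinearizationFamily}, to \emph{prove} Proposition~\ref{prop:ChoiceLinearizationFamily}; the triviality of the $\mu_2$-action on $H^0$ at the generic fiber is the conclusion of that chain of reasoning, not something you may assume here.

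More seriously, your main route never identifies the geometric mechanism that actually pins down the sign. The assertion that a linearization acting by $-1$ on one section acts by $-1$ on all of $H^0$ is false in general (a $\mu_2$-representation on $H^0$ can have both eigenspaces nontrivial), so exhibiting one invariant section would not suffice even if you had one. The paper does not stay on $M_{\mathrm{last}}$: it passes to a further birational model $M$ equipped with a Lagrangian fibration $g\colon M\to\PP^n$ whose smooth fibers are Jacobians of genus-$n$ curves, on which the involution acts as multiplication by $-1$. The line bundle $L$ restricts to twice a principal polarization on each such fiber, and every section of twice a principal polarization on an abelian variety is invariant under $[-1]$. Hence $(g_*L)^{\mu_2}$ agrees with $g_*L$ generically, and therefore everywhere since both are locally free. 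This abelian-variety input is the entire content of the proof; nothing in your sketch---reflections on $H^2$, determinant-of-cohomology constructions, or an unspecified invariant divisor---substitutes for it.
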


\begin{proof}
We adopt the notation of~\cite{involutions1}. The fiber $\cN_0=\overline{M}$ is given by a divisorial contraction $\phi\colon M_{\mathrm{last}}\to \overline{M}$. 
The $\mu_2$-linearization of $\cL_0$  comes from a natural $\mu_2$-linearization of the line bundle  $L_{\mathrm{last}}$ on $M_{\mathrm{last}}$. The latter 
is induced by a $\mu_2$-linearization of a line bundle $L$ on 
a HK manifold $M$ birational to $M_{\mathrm{last}}$. More precisely, there is a birational map $\psi\colon M\dra M_{\mathrm{last}}$ such that $\psi^{*}L_{\mathrm{last}}\cong L$. Thus 
$H^0(\cN_0,\cL_0)=H^0(M_{\mathrm{last}},L_{\mathrm{last}})=H^0(M,L)$, and hence it suffices to prove that 
$H^0(M,L)^{\mu_2} =H^0(M,L)$. Here the $\mu_2$-action on $M$ is described as follows: $M$ comes with a natural Lagrangian fibration structure $g\colon M\to \P^n$, and the involution maps every Lagrangian fiber to itself and equals multiplication by $-1$ on smooth Lagrangian fibers.
The line bundle $L$ is isomorphic to $\cO_{M}(\Delta)\otimes g^*\cO_{\P^n}(1)$, where $\Delta$ is the divisor described in~\cite[Proposition 3.4]{involutions1}.
The restriction of $\Delta$ to a smooth Lagrangian fiber (which is the Jacobian of a curve of genus~$n$) is twice the natural
principal polarization. 

By~\cite[Theorem~10.32]{Kollar:SingularitiesMMP}, the pushforward $g_* L=g_*\cO_M(\Delta)\otimes\cO_{\P^n}(1)$ is a vector bundle and $(g_* L)^{\mu_2}$ is a vector subbundle, which generically coincides with $g_* L$, and thus they coincide everywhere.
This concludes the proof because every section of twice a
principal polarization is invariant under multiplication by $-1$.
\end{proof}

\begin{proof}[Proof of Proposition~\ref{prop:ChoiceLinearizationFamily}]
Let us consider the family $\mathcal{N}\to\mathbb{D}$.
Lemma~~\ref{lem:LinearizationFamily} shows that analytically locally we can propagate the $\mu_2$-linearization.
By Proposition~\ref{prop:H0Invariant}, the invariant subspace is the whole space at the fiber $\mathcal{N}_0$; hence there is no monodromy and the invariant subspace is the whole space on all fibers, as we wanted.
\end{proof}

\begin{coro}\label{coro:negbase}
Let $(X,\lambda)$ be a polarized {\rm HK} manifold of dimension $2n$ of $\mathrm{K3}^{[n]}$-type, with $q_X(\lambda)=2$ and $\mathrm{div}(\lambda)=2$. 
Let $L$ be the ample line bundle such that $c_1(L)=\lambda$, equipped with the $\mu_2$-linearization of Proposition ~\ref{prop:ChoiceLinearizationFamily}. 
Then $\Fix(\tau_{\lambda})_{-}$ is contained in the base locus of $|L|$.
\end{coro}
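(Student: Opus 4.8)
The plan is to show that every section of $L$ restricts to zero on $\Fix(\tau_\lambda)_-$, which exactly says $\Fix(\tau_\lambda)_- \subset \mathrm{Bs}|L|$. The key point is that, with respect to the chosen $\mu_2$-linearization of $L$ from Proposition~\ref{prop:ChoiceLinearizationFamily}, the $\mu_2$-action on $H^0(X,L)$ is trivial, i.e.\ $H^0(X,L)^{\mu_2}=H^0(X,L)$. Fix a section $s\in H^0(X,L)$ and a point $x\in\Fix(\tau_\lambda)_-$. By definition of the negative component (see~\eqref{eq:SigmaPM}), $\mu_2$ acts on the fiber $L|_x$ through the determinantal character $\CC_-$, i.e.\ by $-1$. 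On the other hand, since $s$ is $\mu_2$-invariant, the value $s(x)\in L|_x$ is fixed by the $\mu_2$-action on $L|_x$; but the only vector in $\CC_-$ fixed by the nontrivial element of $\mu_2$ is $0$, so $s(x)=0$. As $s$ and $x$ were arbitrary, $\Fix(\tau_\lambda)_-$ lies in the common zero locus of all sections of $L$, which is the base locus of $|L|$.

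To make this precise one must be slightly careful about what ``$s$ is $\mu_2$-invariant'' means at the level of fibers. A $\mu_2$-linearization of $L$ is an isomorphism $\Phi\colon \tau_\lambda^*L\xrightarrow{\sim}L$ squaring (appropriately) to the identity; invariance of $s$ under the induced action on $H^0(X,L)$ means $\Phi(\tau_\lambda^*s)=s$ as global sections. Evaluating at a fixed point $x$, where $\tau_\lambda(x)=x$, the map $\Phi$ restricts to the linearization-induced action of $\mu_2$ on $L|_x$, and $\tau_\lambda^*s$ evaluated at $x$ is identified with $s(x)$; hence $\Phi_x(s(x))=s(x)$, i.e.\ $s(x)$ is a fixed vector for the $\mu_2$-representation $L|_x$. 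Since $x\in\Fix(\tau_\lambda)_-$ means $L|_x\cong\CC_-$ as a $\mu_2$-representation, the fixed subspace of $L|_x$ is $\{0\}$, forcing $s(x)=0$.

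I do not expect any serious obstacle here: this is a short linear-algebra argument once Proposition~\ref{prop:ChoiceLinearizationFamily} has been established, and the only mild subtlety is bookkeeping the identification between the action on global sections and the action on fibers at a fixed point. (By contrast, all the real work lies in Proposition~\ref{prop:ChoiceLinearizationFamily} itself, which guarantees the existence of a linearization with $H^0(X,L)^{\mu_2}=H^0(X,L)$.) It is worth remarking that the analogous statement fails for $\Fix(\tau_\lambda)_+$: there $L|_x\cong\CC_+$, the fixed subspace is all of $L|_x$, and indeed one expects $|L|$ to separate points of the positive component rather than vanish on it.
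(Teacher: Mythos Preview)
Your proposal is correct and follows exactly the same approach as the paper: both argue that since $H^0(X,L)=H^0(X,L)^{\mu_2}$ by Proposition~\ref{prop:ChoiceLinearizationFamily}, every section $s$ is $\mu_2$-invariant, and hence at a point $x\in\Fix(\tau_\lambda)_-$ where $L|_x\cong\CC_-$ the value $s(x)$ must be fixed by the $(-1)$-action, forcing $s(x)=0$. Your version simply unpacks the identification between the action on sections and the action on fibers more explicitly than the paper does.
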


\begin{proof}
Let $x\in\Fix(\tau_{\lambda})_{-}$. If $s\in H^0(X,L)$ is invariant under the $\mu_2$-linearization of $L$, then $s(x)=0$. 
Since by construction $H^0(X,L)=H^0(X,L)^{\mu_2}$, the result follows.
\end{proof}

Let  $\cM\to\mathbb{D}$ be the family of Section~\ref{subsec:negzerofiber}, with fiber over $0$ given by $\overline{M}_{S,n}$. Let $\ov{L}$ be the ample line bundle over $\overline{M}_{S,n}$ such that $c_1(\ov{L})=\ov{\lambda}$.
Lemma~\ref{lem:LinearizationFamily} and Proposition~\ref{prop:ChoiceLinearizationFamily} give a natural $\mu_2$-linearization of $\ov{L}$ such that
\[
H^0(\overline{M}_{S,n},\ov{L})^{\mu_2} = H^0(\overline{M}_{S,n},\ov{L}).
\]

\subsection{Proof of Proposition~\ref{prop:negfiber}}\label{subsec:LinearizationDegree2}

We are in the setting of Section~\ref{subsec:negzerofiber}.
Let $f\colon S\to \P^2=|h|^{\vee}$ be the associated double cover, ramified on a smooth sextic curve $\mathsf{\Gamma}$, and let $\tau_S$ be the covering involution of $f$.
Let $\tau_M,\tau_{\ov{M}}$ be the involutions of the moduli space $M_{S,n}$ and of the Donaldson-Uhlenbeck-Yau compactification $\ov{M}_{S,n}$ respectively. Let $\ov{\iota}\colon \ov{M}_{\PP^2,n}\hra \ov{M}_{S,n}$ be the closed embedding of Theorem~\ref{thm:embedding}.

Let
\begin{equation}\label{eq:FixedFamilyFlat}
\pi_{\pm}\colon \cY_{\pm} \longrightarrow \mathbb{D}
\end{equation}
be the restriction of $\cM\to\mathbb{D}$ to $\cY_{\pm}$. 
By the discussion in Section~\ref{subsec:negzerofiber}, $\pi_{\pm}$ is a flat projective morphism which is smooth outside of $\mathrm{Sing}(\overline{M}_{S,n})\cap \cY$.
Moreover the general fiber of  $\pi_{\pm}$ is irreducible, and hence  by~\citestacks{0AY8} we have
\begin{equation}\label{pushstruct}
\left(\pi_{\pm}\right)_*\cO_{\cY_{\pm}}=\cO_{\mathbb{D}}.
\end{equation}

Note that as a consequence of Lemma \ref{lem:LinearizationFamily} and of their definition, $\cY_-$ and $\cY_+$ are disjoint.

\begin{lemm}\label{lem:tuttasola}
The image $\ov{\iota}(\ov{M}_{\PP^2,n})$ in $\ov{M}_{S,n}$ is either the (set theoretic) fiber of $\cY_+\to \mathbb{D}$ over $0$ or the (set theoretic) fiber of $\cY_-\to \mathbb{D}$ over $0$.
\end{lemm}  

\begin{proof}
As already noted, the Zariski dense open subset 
$\ov{\iota}(\ov{M}_{\PP^2,n})\setminus\Sing\overline{M}_{S,n}$ is an irreducible component of the fixed locus of the restriction of $\tau_{\ov{M}}$ to the smooth locus of $\overline{M}_{S,n}$ and hence it is contained in $\cY$. 
Therefore $\ov{\iota}(\ov{M}_{\PP^2,n})$ is an irreducible component of the (set theoretic) fiber of $\cY$ over $0$. 
Of course $\ov{\iota}(\ov{M}_{\PP^2,n})$ belongs either to the fiber over $0$ of  $\cY_{+}$ or of $\cY_{-}$. 
If the former holds we let $\cY_{\sigma}=\cY_{+}$, if the latter holds we let $\cY_{\sigma}=\cY_{-}$.

 It follows from ~\eqref{pushstruct} that the (set theoretic) fiber of $\cY_{\sigma}$ over $0$ is connected. Since $\cY_{+}$ and $\cY_{-}$ are disjoint, it remains to show that $\ov{\iota}(\ov{M}_{\PP^2,n})$ does not meet any other irreducible component of the fiber of $\cY_{\sigma}$ over $0$.

Using the equality in~\eqref{pushstruct}, Grothendieck's Connectedness Theorem ~\cite[Theorem 7]{Faltings:Formal} (see also~\citestacks{0ECQ}) applies to the fibers of $\cY_{\sigma}$:  if $X, X'$ are distinct irreducible components which meet, then the intersection $X\cap X'$ has codimension  $1$ in $X$ (and $X'$).
Now assume that the (set theoretic) fiber of $\cY_{\sigma}$ over $0$ is not equal to $\ov{\iota}(\ov{M}_{\PP^2,n})$. 
Then $\ov{\iota}(\ov{M}_{\PP^2,n})$ meets another irreducible component of the fixed locus of $\tau_{\ov{M}}$, say $X'$, in a subset of codimension $1$ in  $\ov{\iota}(\ov{M}_{\PP^2,n})$. This subset $\ov{\iota}(\ov{M}_{\PP^2,n})\cap X'$ necessarily lies in the singular locus of $\ov{M}_{S,n}$. The intersection $\ov{\iota}(\ov{M}_{\PP^2,n})\cap\Sing\overline{M}_{S,n}$ is contained in the singular locus of $\ov{\iota}(\ov{M}_{\PP^2,n})$ because away from the singular locus of $\ov{M}_{\PP^2,n}$ the map $\ov{\iota}$ equals $\iota$ which is an embedding of  smooth varieties and $\ov{\iota}$ maps $\Sing(\ov{M}_{\PP^2,n})$ into $\Sing\overline{M}_{S,n}$.  However we know that  the singular locus of $\ov{M}_{\PP^2,n}$ has codimension $2$ in $\ov{M}_{\PP^2,n}$, hence since $\ov{\iota}$ is a closed embedding (Theorem \ref{thm:embedding}) it follows that $\ov{\iota}(\ov{M}_{\PP^2,n})$ and $X'$ have non empty intersection away from $\Sing\overline{M}_{S,n}$. 
This is absurd because the fixed locus of $\tau_{\ov{M}}$ is smooth away from $\Sing\overline{M}_{S,n}$.
\end{proof}

Our next goal is to prove that, in the notation of the proof of Lemma~\ref{lem:tuttasola}, we have $\cY_{\sigma}=\cY_{-}$. 
We do not prove this directly, i.e., by computing the action of $\tau_{\ov{M}}$ on the fiber of $\cL_0$  at points of $\ov{\iota}(\ov{M}_{\PP^2,n})$; we follow a more roundabout path. We start by giving an explicit construction of stable sheaves whose moduli points are in the fixed locus of $\tau_M$ but do not belong to $\iota(M_{\PP^2,n})$. 

Let $W\subset S$ be a general subscheme of length $(n+4)/2$. Then 
\begin{equation*}\label{dimext}
\dim\Ext^1_S(\cI_W,\cO_S(-1))=(n-2)/2,
\end{equation*}
and hence 
$\Ext^1_S(\cI_W,\cO_S(-1))$ is non zero because $n\ge 4$. Note that if
\begin{equation}\label{estensionefissa}
0\lra \cO_S(-1)\lra \cF\lra \cI_W\lra 0
\end{equation}
is an extension, then $v(\cF)=v_n$. 
Hence if $\cF$ is stable, then its isomorphism class $[\cF]$ is a point of $M_{S,n}$. 

\begin{lemm}\label{lem:fuoriemmepi}
Keeping notation and hypotheses as above, assume that the extension in~\eqref{estensionefissa} is general. 
Then the following hold:
\begin{enumerate}[{\rm (a)}]
\item\label{enum:fuoriemmepi1} $\cF$ is a stable  locally free sheaf, and  $[\cF]$ is fixed by the involution $\tau_{M}$.
\item\label{enum:fuoriemmepi2} If $C\in|h|$ is a smooth curve  containing exactly one point of $W$ and transverse to $\mathsf{\Gamma}$ at that point, then the restriction of $\cF$ to $C$ is semistable.
\item\label{enum:fuoriemmepi3} $[\cF]$ is not a point of $\iota(M_{\PP^2,n})$.
\end{enumerate}
\end{lemm}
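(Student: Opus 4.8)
The goal is to produce an explicit stable rank-2 bundle $\cF$ on $S$, fitting in an extension of $\cI_W$ by $\cO_S(-1)$, which is fixed by $\tau_M$ but is not a pull-back $f^*\cG$ for $\cG$ a stable sheaf on $\PP^2$. First I would record the numerics: the extension class lives in $\Ext^1_S(\cI_W,\cO_S(-1))$, whose dimension is $(n-2)/2$ by~\eqref{dimext} (this comes from Serre duality $\Ext^1_S(\cI_W,\cO_S(-1))\cong \Ext^1_S(\cO_S(-1),\cI_W)^\vee\cong H^1(S,\cI_W(1))^\vee$ and the length-$(n+4)/2$ computation via the ideal-sheaf sequence twisted by $\cO_S(1)$, using $h^0(\cO_S(1))=3$, $\chi(\cO_S(1))=4$, and $W$ general), so the space is nonzero precisely because $n\ge4$. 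A Chern-class (Mukai vector) computation on the sequence~\eqref{estensionefissa} then gives $v(\cF)=v_n$, so a stable $\cF$ defines a point of $M_{S,n}$.

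\textbf{Part (a).} To see $\cF$ is locally free for general extension class: $\cF$ is already torsion-free of rank $2$, and $\cF^{\vee\vee}/\cF$ is supported on $\Supp(W)$; local-freeness fails only along those points of $W$ where the extension does not ``smooth out'' the ideal sheaf. The standard fact is that a general extension of $\cI_W$ by a line bundle is locally free when $\mathrm{Ext}^1$ is globally generated at each point of $W$ — here since $\NS(S)=\ZZ h$ and $W$ is a general length-$(n+4)/2$ scheme, the evaluation $\Ext^1_S(\cI_W,\cO_S(-1))\to \Ext^1_S(\cI_W,\cO_S(-1))\otimes k(p)$ at each $p\in\Supp(W)$ is surjective (Cayley–Bacharach type condition), so a general extension is locally free. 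For stability: any destabilizing sub-line-bundle of $\cF$ must be $\cO_S(m)$ with $\mu(\cO_S(m))\ge\mu(\cF)=-\tfrac12 h$, i.e. $m\ge 0$; but $\Hom_S(\cO_S(m),\cF)$ for $m\ge0$ would, composed with $\cF\to\cI_W$, either land in $\Hom_S(\cO_S(m),\cI_W)=0$ (as $m\ge0$) forcing a factorization through $\cO_S(-1)$, impossible for $m\ge0$; so $\cF$ is slope-stable. Finally, $\tau_M$-invariance: apply $\tau_S^*$ to~\eqref{estensionefissa}. Since $W$ is a general length-$(n+4)/2$ subscheme of $S$ and $\tau_S$ is an involution, I would instead choose $W$ to be $\tau_S$-invariant (a general such $W$ still has the same numerics and genericity), so $\tau_S^*\cI_W\cong\cI_W$ and $\tau_S^*\cO_S(-1)\cong\cO_S(-1)$ (as $h$ is the unique polarization, pulled back from $\PP^2$); then $\tau_S^*\cF$ is again an extension of $\cI_W$ by $\cO_S(-1)$, and since $\Ext^1_S(\cI_W,\cO_S(-1))$ carries a $\mu_2$-action, for a general class in an eigenspace (necessarily nonzero for one of the two eigenspaces once the total space is nonzero) $\tau_S^*\cF\cong\cF$, so $[\cF]\in\Fix(\tau_M)$. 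One must check the relevant eigenspace is nonzero — this follows by a dimension count on the two eigenspaces of the $\mu_2$-action on $H^1(S,\cI_W(1))^\vee$, using that for $W$ a general $\tau_S$-invariant length-$(n+4)/2$ scheme both eigenspaces are nonzero for $n\ge 8$ and handled separately for $n=4$.

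\textbf{Parts (b) and (c).} For~(b): restrict~\eqref{estensionefissa} to a smooth $C\in|h|$ with $f(C)$ a line meeting $W$ in exactly one point $p$, transverse to $\mathsf\Gamma$ there. Then $W\cap C = \{p\}$ (a reduced point, since $f(C)$ meets $W$ in one point and $C\to f(C)$ is the double cover unramified near $p$... — one needs $W$ to have reduced intersection with $C$, true for general $C$), so $\cI_W|_C$ fits in $0\to\cO_C(-p)\to\cI_W|_C\oplus(\mathrm{torsion})\to\dots$; after removing the torsion $\cF|_C$ is an extension of $\cO_C(-p)$ by $\cO_C(-1)|_C=\cO_C(-2)$ (degree $-2$ on a genus-$2$ curve $C$), up to the Tor term. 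Computing: $\cF|_C$ has rank $2$, degree $-3$, and its sub-line-bundle $\cO_C(-2)$ has slope $-1 \le -3/2$? No: $-1 > -3/2$, so this is not yet semistability — the point is that $\cO_C(-2)$ does \emph{not} destabilize since we need slope $> -3/2$ to destabilize and $-1 > -3/2$. Hmm, so I must argue more carefully: I would show that for a general extension the only sub-line-bundles of $\cF|_C$ of slope $\ge -3/2$ are of slope exactly... — concretely, any destabilizing line bundle $\cL\hra\cF|_C$ has $\deg\cL\ge -1$; composing with $\cF|_C\to\cO_C(-p)$ (degree $-1$) either $\cL\to\cO_C(-p)$ is nonzero, forcing $\deg\cL\le -1$ hence $\cL=\cO_C(-p)$ and the extension splits on $C$ (non-general), or $\cL\hra\cO_C(-2)$ forcing $\deg\cL\le -2<-3/2$. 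So for general extension $\cF|_C$ is semistable. For~(c): if $[\cF]\in\iota(M_{\PP^2,n})$ then $\cF\cong f^*\cG$ for a Gieseker-stable $\cG$ on $\PP^2$ with Chern character $u_n$; restricting to $C$, $\cF|_C\cong f^*(\cG|_{f(C)}) = g^*(\cG|_{\ell})$ where $g\colon C\to\ell\cong\PP^1$ is the double cover branched at the $6$ points $f(C)\cap\mathsf\Gamma$. Now $\cG|_\ell$ is a rank-$2$ bundle on $\PP^1$, hence $\cG|_\ell\cong\cO(a)\oplus\cO(b)$, so $\cF|_C\cong g^*\cO_{\PP^1}(a)\oplus g^*\cO_{\PP^1}(b)$ is a \emph{decomposable} bundle on $C$; but then $g^*\cO_{\PP^1}(\max(a,b))$ is a sub-line-bundle of $\cF|_C$ of slope $\max(a,b)$ with $2\max(a,b)\ge a+b = \deg(\cG|_\ell) = \mathrm{something}$ — I would compute $\deg(\cG|_\ell)$ from $\ch(u_n)=(2,-1,\dots)$ giving $c_1(\cG)\cdot\ell=-1$, so $a+b=-1$ (and pulling back $\deg g^*\cO(c)=2c$ on $C$) hence $\deg\cF|_C = -2$, contradicting $\deg\cF|_C=-3$ computed above — wait, I need to recheck: $\deg(f^*\cO_S(-1)|_C)$ and the parity. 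Rather than chase signs here, the clean argument is: $\cF\cong f^*\cG$ implies $\tau_S^*\cF\cong\cF$ \emph{canonically} and moreover $\cF|_C$ is pulled back from $\ell$, hence decomposable after restriction to $C$ (as every bundle on $\PP^1$ splits), whereas for a general extension $\cF|_C$ is \emph{stable} (not just semistable — I would upgrade~(b) to stability for a general choice of $C$ and extension by the same slope argument, ruling out the boundary case), and a stable bundle on $C$ is indecomposable. This contradiction proves~(c). The main obstacle will be part~(c): making rigorous that ``general extension $\Rightarrow$ $\cF|_C$ stable (not merely semistable)'' for a suitable choice of line $f(C)$, which requires controlling the restriction of the extension class under $\Ext^1_S(\cI_W,\cO_S(-1))\to\Ext^1_C(\cF|_C$-data$)$ and checking it is nonzero/non-special — this is where I would spend the real work, likely by exhibiting one explicit $(W,C,\text{extension})$ with $\cF|_C$ stable and invoking openness.
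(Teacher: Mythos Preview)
Your overall strategy for (b) and (c)---restrict to $C$, analyze the resulting rank-$2$ bundle, and compare with what a pull-back from $\PP^1$ would look like---is the paper's strategy too. But there is a genuine computational error that sends you down a harder road than necessary.

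The error is in the restricted exact sequence. You write the sub-line-bundle as $\cO_S(-1)|_C\cong\omega_C^{-1}$ (degree $-2$) and the quotient as $\cO_C(-P)$ (degree $-1$), getting $\deg\cF|_C=-3$. But $c_1(\cF)\cdot C=-h\cdot h=-2$, so something is off. The point you flag (``up to the Tor term'') is exactly where the discrepancy lives: $\cI_W|_C$ has a length-$1$ torsion subsheaf supported at $P$, so the \emph{saturation} of $\omega_C^{-1}$ inside the locally free sheaf $\cF|_C$ is $\omega_C^{-1}(P)$, and the torsion-free quotient is $\cO_C(-P)$. The correct sequence is
\[
0\longrightarrow \cO_C(P)\otimes\omega_C^{-1}\longrightarrow \cF|_C\longrightarrow \cO_C(-P)\longrightarrow 0,
\]
with both sub and quotient of degree $-1$. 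Semistability of $\cF|_C$ is then \emph{automatic} from this equality of degrees---no appeal to genericity of the extension is needed for (b).

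This also dissolves your ``main obstacle'' in (c). You do not need to upgrade (b) to stability. If $\cF\cong f^*\cG$ then $\cF|_C\cong g^*(\cG|_\ell)$ with $g=f|_C\colon C\to\ell\cong\PP^1$, and $\cG|_\ell\cong\cO(a)\oplus\cO(b)$ with $a+b=-1$. Since $a\neq b$, the summand $g^*\cO(\max(a,b))$ has degree $2\max(a,b)\ge 0>-1=\mu(\cF|_C)$, contradicting the semistability just established. (Equivalently: a rank-$2$ bundle of odd degree on $\PP^1$ is unstable, and this instability survives pull-back along a finite cover.) So (c) follows directly from (b); the ``real work'' you anticipate is not there.

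One further remark on the setup: the hypothesis ``transverse to $\mathsf\Gamma$ at that point'' places the unique point $P\in W\cap C$ on the ramification curve, not away from it. This does not affect the degree computation above, but your parenthetical ``unramified near $p$'' is a misreading.
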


\begin{proof}
Item~\ref{enum:fuoriemmepi1} is easily checked (recall that $\NS(S)=\ZZ h$). 
In order to prove Item~\ref{enum:fuoriemmepi2}, let $P$ be the unique point of $W$ contained in $C$.
The extension in~\eqref{estensionefissa} gives that we have an exact sequence
\begin{equation}\label{restringo}
0\lra \cO_C(P)\otimes\omega^{-1}_C\lra \cF_{|C}\lra \cO_C(-P)\lra 0.
\end{equation}
Since $\deg(\cO_C(P)\otimes\omega^{-1}_C)=\deg\cO_C(-P)$, this proves~\ref{enum:fuoriemmepi2}. 
The exact sequence in~\eqref{restringo} shows that the restriction of $\cF$ to $C$ is not isomorphic to the pull-back of a sheaf on $f(C)$. This proves Item~\ref{enum:fuoriemmepi3}. 
\end{proof}

\begin{prop}\label{prop:tuttasola}
The (set theoretic) fiber of $\pi_{-}\colon\cY_{-}\to\mathbb{D}$ over $0$ is equal to $\ov{\iota}(\ov{M}_{\PP^2,n})$. 
\end{prop}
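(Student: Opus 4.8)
The plan is to combine the construction of Lemma~\ref{lem:fuoriemmepi} with a base-locus argument. Recall from the proof of Lemma~\ref{lem:tuttasola} that $\cY_{\sigma}\in\{\cY_{+},\cY_{-}\}$ denotes the component of $\cY$ whose (set-theoretic) fibre over $0$ equals $\ov{\iota}(\ov{M}_{\PP^2,n})$ — it is well defined because $\cY_{+}$ and $\cY_{-}$ are disjoint and $\ov{\iota}(\ov{M}_{\PP^2,n})$ is connected — and that the statement to be proved is precisely $\cY_{\sigma}=\cY_{-}$. So it is enough to exhibit one point of $\cY_{+}$ lying over $0$ which is not contained in $\ov{\iota}(\ov{M}_{\PP^2,n})$: such a point shows $\cY_{+}\neq\cY_{\sigma}$, hence $\cY_{\sigma}=\cY_{-}$.

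The candidate point will be $[\cF]$, for $\cF$ as in Lemma~\ref{lem:fuoriemmepi} coming from a general extension~\eqref{estensionefissa}. Since $\cF$ is locally free, $[\cF]\notin\Delta_n$, so $\phi$ is an isomorphism near $[\cF]$ and $[\cF]$ is a smooth point of $\ov{M}_{S,n}$ fixed by $\tau_{\ov{M}}$ by Lemma~\ref{lem:fuoriemmepi}\ref{enum:fuoriemmepi1}. Because $\Fix(\tau_{\cM})\to\mathbb{D}$ is smooth at $[\cF]$ (as $[\cF]\notin\Sing(\ov{M}_{S,n})$), the point $[\cF]$ lies in the closure of $\Fix(\tau_{\cM})\cap\cM^{*}=\cY^o_{+}\sqcup\cY^o_{-}$, hence $[\cF]\in\cY_{+}$ or $[\cF]\in\cY_{-}$. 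Moreover $[\cF]\notin\ov{\iota}(\ov{M}_{\PP^2,n})$: we have $[\cF]\notin\iota(M_{\PP^2,n})$ by Lemma~\ref{lem:fuoriemmepi}\ref{enum:fuoriemmepi3}, and a point of $\ov{\iota}(\ov{M}_{\PP^2,n})$ carrying a locally free sheaf necessarily comes from a locally free sheaf on $\PP^2$, i.e.\ lies in $\iota(M_{\PP^2,n})$. Thus it remains to prove $[\cF]\in\cY_{+}$, and for this it suffices to show that $[\cF]$ is not a base point of $|\ov{L}|$, where $c_1(\ov{L})=\ov{\lambda}$. Indeed, Kawamata--Viehweg vanishing on the smooth model $M_{S,n}$ (where $K_{M_{S,n}}=0$ and $\widetilde{\lambda}$ is nef and big), together with $R\phi_{*}\cO_{M_{S,n}}=\cO_{\ov{M}_{S,n}}$, yields $H^{>0}(\ov{M}_{S,n},\ov{L})=0$, and likewise $H^{>0}(\cM_t,\cL_t)=0$ for $t\neq0$ since $\cM_t$ is hyper-K\"ahler and $\cL_t$ ample; hence $\pi_{*}\cL$ is locally free with cohomology and base change, and the relative base locus $\cB\subset\cM$ is closed with $\cB\cap\cM_t=\mathrm{Bs}\,|\cL_t|$ for all $t$. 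By Corollary~\ref{coro:negbase} applied fibrewise (with the linearization of Proposition~\ref{prop:ChoiceLinearizationFamily}), $\cY^o_{-}\cap\cM_t=\Fix(\tau_{\lambda_t})_{-}\subset\mathrm{Bs}\,|\cL_t|$ for $t\neq0$, so $\cY^o_{-}\subset\cB$ and hence $\cY_{-}\subset\cB$; in particular the fibre of $\cY_{-}$ over $0$ lies in $\mathrm{Bs}\,|\ov{L}|$, and so $[\cF]\notin\mathrm{Bs}\,|\ov{L}|$ forces $[\cF]\in\cY_{+}$.

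To check $[\cF]\notin\mathrm{Bs}\,|\ov{L}|$ — equivalently, since $\phi$ is an isomorphism near $[\cF]$, $\phi_{*}\cO_{M_{S,n}}=\cO_{\ov{M}_{S,n}}$ and $\NS(\ov{M}_{S,n})=\ZZ\ov{\lambda}$, that $[\cF]$ is not a base point of $|\widetilde{\lambda}|$ on $M_{S,n}$ — I will use the classical description of $\widetilde{\lambda}=\vartheta(0,h,-d)$ via the Mukai isometry. For a smooth curve $C\in|h|$ (of genus $d+1$) and $\xi\in\Pic^0(C)$, the sheaf $\iota_{C*}\xi$ has Mukai vector $(0,h,-d)\in v_n^{\perp}$, and the associated determinant line bundle on $M_{S,n}$ is $\cO_{M_{S,n}}(\widetilde{\lambda})$, whose canonical theta section vanishes at $[\cF']$ exactly where $R\Hom_S(\iota_{C*}\xi,\cF')\neq0$. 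So it is enough to find $C$ and $\xi$ with $R\Hom_S(\iota_{C*}\xi,\cF)=0$. Choose $C$ so that $f(C)$ is a line meeting $W$ in exactly one point and transverse to $\mathsf{\Gamma}$ there (possible as $W$ is general); by Lemma~\ref{lem:fuoriemmepi}\ref{enum:fuoriemmepi2} the restriction $\cF|_C$ is semistable, so $E:=\cF|_C\otimes\omega_C$ is a semistable bundle of rank $2$ and slope $g(C)-1$ on $C$. Such a bundle has a genuine theta divisor in $\Pic^0(C)$; picking $\xi$ outside it gives $H^0(C,E\otimes\xi^{-1})=0$, and then $H^1(C,E\otimes\xi^{-1})=0$ as well because $\chi(E\otimes\xi^{-1})=0$. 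By Grothendieck duality $R\Hom_S(\iota_{C*}\xi,\cF)\cong R\Gamma(C,E\otimes\xi^{-1})[-1]$, which therefore vanishes; hence the theta section is non-zero at $[\cF]$ and $[\cF]\notin\mathrm{Bs}\,|\widetilde{\lambda}|$. Combining with the previous paragraph, $[\cF]\in\cY_{+}\setminus\ov{\iota}(\ov{M}_{\PP^2,n})$, so $\cY_{\sigma}=\cY_{-}$ and the proposition follows from Lemma~\ref{lem:tuttasola}.

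The main obstacle is the last step: matching the abstract divisor class $\widetilde{\lambda}$ with the concrete theta divisor of restrictions to members of $|h|$, keeping careful track of the twist by $\omega_C$ (so that the relevant Euler characteristic vanishes and the determinant line bundle carries its theta section), and invoking the classical fact that a semistable bundle of rank $2$ and slope $g-1$ on a curve has a proper theta divisor — it is exactly this that converts the semistability statement of Lemma~\ref{lem:fuoriemmepi}\ref{enum:fuoriemmepi2} into the non-vanishing of a section of $\widetilde{\lambda}$ at $[\cF]$. By contrast, the base-locus propagation across the family $\cM\to\mathbb{D}$ is routine once the vanishing $H^{>0}(\cM_t,\cL_t)=0$ (hence cohomology and base change) is in place.
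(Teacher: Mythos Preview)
Your proof is correct and follows essentially the same approach as the paper's: both arguments produce the locally free fixed sheaf $[\cF]$ of Lemma~\ref{lem:fuoriemmepi}, use the determinant/theta section attached to $i_{C,*}\xi$ (with $\xi\in\Pic^0(C)$ generic) to show that $[\cF]$ is not a base point of $|\widetilde{\lambda}|$, and combine this with Corollary~\ref{coro:negbase} to conclude that $[\cF]$ cannot lie in the fibre of $\cY_{-}$. The paper phrases this as a contradiction while you argue directly, and you spell out the cohomology-and-base-change step and the Grothendieck duality computation more explicitly, but the substance is the same.
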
  

\begin{proof}
Assume the contrary. 
By Lemma~\ref{lem:tuttasola} we get that the (set theoretic) fiber of $\pi_{+}\colon\cY_{+}\to\mathbb{D}$ over $0$ is equal to
$\ov{\iota}(\ov{M}_{\PP^2,n})$. Since, by Corollary~\ref{coro:negbase}, $\Fix(\tau_M)_{-}$ is contained in the base locus of $|\cL_0|$, it follows that every $[\cE]\in \Fix(\tau_M)$ which corresponds to a locally free sheaf and is not contained 
in $\iota(M_{\PP^2,n})$ is contained in the base locus of $|\cL_0|$. In particular this must hold for the points $[\cF]$ corresponding to the  locally free sheaves $\cF$ of Lemma~\ref{lem:fuoriemmepi}. This is a contradiction. In fact let $C\in|h|$ be a curve as in Lemma~\ref{lem:fuoriemmepi}\ref{enum:fuoriemmepi2}. 
Since $\cL_0$ is the determinant line bundle on $\ov{M}_{S,n}$ associated to $\cO_S(H)$, one  associates a section $s_{i_{C,*}\xi}$ of $\cL_0$ to a line bundle $\xi$ on $C$  of degree~0 with the property
that $s_{i_{C,*}\xi}(\cE)\neq 0$ if and only if $\Ext^*(\cE,i_{C,*}\xi)=0$.
By Lemma~\ref{lem:fuoriemmepi} we get that for a general line bundle $\xi$ 
we have $\Ext^*(\cF\vert_{C},i_{C,*}\xi)=0$, and hence $[\cF]$ is not contained in the zero divisor of 
$s_{i_{C,*}\xi}$.
\end{proof}

The following result is needed in order to finish the proof of Proposition~\ref{prop:negfiber}.

\begin{lemm}\label{lem:NormalityImpliesReduced}
Let $\pi\colon \mathcal{Z} \to \mathbb{D}$ be a flat projective morphism from an integral variety $\mathcal{Z}$ to a smooth pointed curve $0 \in \mathbb{D}$ and let $\mathcal{Z}_0$ denote the fiber over $0$.
Assume that the non-normal locus of $\mathcal{Z}$ is strictly contained in the fiber $\mathcal{Z}_0$, and that $\mathcal{Z}_0$ is generically reduced and the reduction $\mathcal{Z}_{0,\mathrm{red}}$ is integral and normal.
Then $\mathcal{Z}_0$ is reduced and $\mathcal{Z}$ is normal.
\end{lemm}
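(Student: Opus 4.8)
The plan is to first upgrade ``generically reduced'' to ``reduced everywhere'' for $\mathcal{Z}_0$, and then deduce normality of $\mathcal{Z}$ from Serre's criterion, using that $\mathcal{Z}$ is already normal away from (part of) $\mathcal{Z}_0$. Write $\mathcal{Z}_0 = \pi^{-1}(0)$ as a Cartier divisor on $\mathcal{Z}$, cut out by a uniformizer $t$ of $\mathbb{D}$ at $0$. Since $\pi$ is flat with integral total space $\mathcal{Z}$, the scheme $\mathcal{Z}$ is Cohen--Macaulay at every point of $\mathcal{Z}_0$ provided $\mathcal{Z}_0$ is; but more to the point, $\mathcal{Z}_0$ being a Cartier divisor in the integral (hence $S_1$, and by the flatness/CM discussion even $S_2$ in a neighborhood, see below) scheme $\mathcal{Z}$ forces $\mathcal{Z}_0$ to have no embedded components: indeed a Cartier divisor on a scheme satisfying $S_1$ is itself $S_0$ with all associated points of pure codimension. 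Concretely, $t$ is a nonzerodivisor on $\mathcal{O}_{\mathcal{Z}}$, and on a normal (hence $S_2$) open set $\mathcal{Z}$ the divisor $\mathcal{Z}_0$ is $S_1$, so its associated primes are exactly the generic points of its components; away from the non-normal locus this already holds, and the point is to propagate it.

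The key step is the following: apply Lemma~\ref{lem:tuttasola}-style connectedness, or rather the standard fact that for a flat family over a smooth curve with integral total space, $\mathcal{Z}_0$ is reduced at $x$ as soon as $\mathcal{Z}$ is $S_2$ at $x$ (equivalently $R_1 + S_2$ near $x$, i.e. normal near $x$) \emph{and} $\mathcal{Z}_0$ is reduced at the codimension-$\le 1$ points of $\mathcal{Z}_0$ near $x$. So I would argue: by hypothesis $\mathcal{Z}_0$ is generically reduced, i.e. reduced at each of its codimension-$0$ points; I claim it is also reduced at its codimension-$1$ points. Let $\xi$ be a codimension-$1$ point of $\mathcal{Z}_0$, so $\dim \mathcal{O}_{\mathcal{Z},\xi} = 2$. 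If $\mathcal{Z}$ is normal at $\xi$, then $\mathcal{O}_{\mathcal{Z},\xi}$ is a $2$-dimensional normal local ring, hence $S_2$, and $t$ is a nonzerodivisor with $\mathcal{O}_{\mathcal{Z}_0,\xi} = \mathcal{O}_{\mathcal{Z},\xi}/(t)$ of dimension $1$ and depth $\ge 1$; combined with generic reducedness of $\mathcal{Z}_{0,\mathrm{red}}$ being \emph{normal} (hence regular in codimension~1), the localization $\mathcal{O}_{\mathcal{Z}_{0,\mathrm{red}},\xi}$ is a DVR, and a standard argument (the total space being regular in codimension~2 along $\mathcal{Z}_0$, or rather: $\mathcal{O}_{\mathcal{Z},\xi}$ normal of dimension $2$ with $\mathcal{O}_{\mathcal{Z},\xi}/(t)$ generically reduced of dimension $1$) shows $\mathcal{O}_{\mathcal{Z}_0,\xi}$ is already reduced. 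The only remaining codimension-$1$ points of $\mathcal{Z}_0$ lie in the non-normal locus of $\mathcal{Z}$; but that locus is \emph{strictly} contained in $\mathcal{Z}_0$, hence is a proper closed subset of the irreducible $(\dim \mathbb{D} + \dim \mathcal{Z}_0 - 1 = \dim \mathcal{Z}_0 + 0)$-dimensional $\mathcal{Z}_{0,\mathrm{red}}$ — wait: since $\mathcal{Z}_{0,\mathrm{red}}$ is integral of dimension $\dim\mathcal{Z}-1$ and the non-normal locus is a proper closed subset, it has codimension $\ge 1$ \emph{in} $\mathcal{Z}_0$, so it contains no codimension-$0$ point of $\mathcal{Z}_0$; to rule it out at codimension-$1$ points I use that $\mathcal{Z}$ is integral hence $R_0+S_1$, so its singular (here: non-normal) locus, being cut out by the conductor, has codimension $\ge 2$ in $\mathcal{Z}$ at points where $\mathcal{Z}$ is $S_2$ — and in fact since $\mathbb{D}$ is a smooth curve and $\mathcal{Z}\to\mathbb{D}$ flat, $\mathcal{Z}$ is $S_2$ wherever $\mathcal{Z}_0$ is $S_1$, which we have just checked in codimension $\le 1$. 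This gives $\mathcal{Z}_0$ reduced in codimension $\le 1$; then Serre's criterion $R_0 + S_1$ for $\mathcal{Z}_0$ — with $S_1$ coming from $\mathcal{Z}$ being $S_2$ (flat over a smooth curve, fiber $S_1$) — yields $\mathcal{Z}_0$ reduced everywhere.

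Finally, once $\mathcal{Z}_0$ is reduced: $\mathcal{Z}_0$ is reduced and its reduction $\mathcal{Z}_{0,\mathrm{red}} = \mathcal{Z}_0$ is normal by hypothesis, hence $\mathcal{Z}_0$ is $R_1$ and $S_2$. Then $\mathcal{Z}$, being flat over the smooth curve $\mathbb{D}$ with $S_2$ fibers, is itself $S_2$; and $\mathcal{Z}$ is $R_1$ since along $\mathcal{Z}_0$ a regular point of $\mathcal{Z}_0$ is a regular point of $\mathcal{Z}$ (smooth fiber, smooth base, flat $\Rightarrow$ smooth total space) so the singular locus of $\mathcal{Z}$ meets $\mathcal{Z}_0$ in codimension $\ge 2$, while off $\mathcal{Z}_0$ the space $\mathcal{Z}$ is normal by hypothesis. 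By Serre's criterion $\mathcal{Z}$ is normal, completing the proof. I expect the main obstacle to be the codimension-$1$ step: carefully checking that at a codimension-$1$ point $\xi$ of $\mathcal{Z}_0$ one genuinely has normality of $\mathcal{Z}$ at $\xi$ — i.e. that the non-normal locus, a priori only known to be a \emph{proper} closed subset of $\mathcal{Z}_0$, really avoids all codimension-$1$ points of $\mathcal{Z}_0$ — which requires the bootstrapping between the $S_1$-ness of $\mathcal{Z}_0$ and the $S_2$-ness of $\mathcal{Z}$ sketched above, rather than being immediate from the hypotheses.
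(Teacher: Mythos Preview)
Your overall architecture is sound and matches the paper: first show $\mathcal{Z}_0$ is reduced, then deduce normality of $\mathcal{Z}$ via Serre's criterion using that $\mathcal{Z}_0$ is normal. The final paragraph (reduced $\Rightarrow$ normal) is correct and essentially identical to the paper's last step.

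The gap is exactly where you flag it: the codimension-$1$ step is circular. You want to show $\mathcal{Z}_0$ is $S_1$ at a codimension-$1$ point $\xi$, which would follow from $\mathcal{Z}$ being $S_2$ (equivalently, normal) at $\xi$. But the hypothesis only says the non-normal locus of $\mathcal{Z}$ is a \emph{proper} closed subset of $\mathcal{Z}_0$; nothing prevents it from being a divisor in $\mathcal{Z}_{0,\mathrm{red}}$, hence from containing codimension-$1$ points of $\mathcal{Z}_0$. Your proposed bootstrap ``$\mathcal{Z}$ is $S_2$ wherever $\mathcal{Z}_0$ is $S_1$, which we have just checked in codimension $\le 1$'' assumes the conclusion: you have only checked $S_1$ for $\mathcal{Z}_0$ at those codimension-$1$ points where you already know $\mathcal{Z}$ is normal. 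At a codimension-$1$ point $\xi$ lying in the non-normal locus, $\mathcal{O}_{\mathcal{Z},\xi}$ is a $2$-dimensional integral domain that could fail $S_2$, and then $\mathcal{O}_{\mathcal{Z}_0,\xi}$ could have the maximal ideal as an embedded prime (think of a local model like $k[x,y]/(x^2,xy)$, which is generically reduced with DVR reduction but not reduced). Nothing in your local argument excludes this.

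The paper breaks this circularity with a genuinely global argument that uses projectivity. It passes to the normalization $\nu\colon\widehat{\mathcal{Z}}\to\mathcal{Z}$: since $\widehat{\mathcal{Z}}$ is normal (hence $S_2$), the fiber $\widehat{\mathcal{Z}}_0$ is $S_1$ and generically reduced, so reduced; then $\widehat{\mathcal{Z}}_0\to\mathcal{Z}_{0,\mathrm{red}}$ is finite birational onto a normal target, hence an isomorphism by Zariski's Main Theorem. Now comes the global step: since $\nu$ is an isomorphism over $\mathbb{D}\setminus\{0\}$ and both $\pi$ and $\widehat{\pi}$ are flat, the Hilbert polynomials of $\mathcal{Z}_0$ and $\widehat{\mathcal{Z}}_0\cong\mathcal{Z}_{0,\mathrm{red}}$ (with respect to a $\pi$-ample line bundle) agree, forcing the nilpotent ideal sheaf of $\mathcal{Z}_0$ to vanish. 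This Hilbert-polynomial comparison is what replaces your missing local input; I do not see how to complete your purely local approach without some such global ingredient.
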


\begin{proof}
Let $\nu\colon\widehat{\mathcal{Z}}\to\mathcal{Z}$ be the normalization of $\mathcal{Z}$ and let $\widehat{\pi}\colon \widehat{\mathcal{Z}} \to \mathcal{Z} \to \mathbb{D}$ be the induced morphism.
Since $\widehat{\mathcal{Z}}$ is normal, it satisfies Serre's condition $S_2$, and hence the fiber $\widehat{\mathcal{Z}}_0$ satisfies $S_1$.
Since $\widehat{\mathcal{Z}}_0$ is generically reduced by our hypotheses, it follows that $\widehat{\mathcal{Z}}_0$ is reduced. 
The restriction  $\widehat{\mathcal{Z}}_0 \to \mathcal{Z}_0$ of $\nu$ to the fibers over $0$ factors via a finite proper morphism $\nu_0': \widehat{\mathcal{Z}}_0 \to \mathcal{Z}_{0,\mathrm{red}}$.
This morphism is birational since $\nu$ is an isomorphism over an open set of $\mathcal{Z}$ which intersects $\mathcal{Z}_0$. 
Since $\mathcal{Z}_{0,\mathrm{red}}$ is normal, $\nu_0'$ is an isomorphism, by Zariski's Main Theorem.

We now show that $\mathcal{Z}_0$ is reduced.
Let $h$ be a $\pi$-ample line bundle.
Then $\widehat{h}=\nu^* h$ is $\widehat{\pi}$-ample. 
Since $\mathcal{Z}$ is normal away from $\mathcal{Z}_0$, $\nu$ is an isomorphism away from $\mathcal{Z}_0$, and so
\[
\chi(\mathcal{Z}_t,\mathcal{O}_{\mathcal{Z}_t}(mh_t))=\chi(\widehat{\mathcal{Z}}_t,\mathcal{O}_{\widehat{\mathcal{Z}}_t} (m\widehat{h}_t)),
\]
for $t \neq 0$ and for all $m\in\ZZ$. 
However, since  $\pi$ (resp., $\widehat{\pi}$) is flat, the Hilbert polynomial of  $\mathcal{Z}_t$ (resp., of $\widehat{\mathcal{Z}}_t$) with respect to $h$ (resp., with respect to $\widehat{h}$) is independent of $t$. 
Thus
\[
\chi(\mathcal{Z}_0,\mathcal{O}_{\mathcal{Z}_0}(mh_0))=\chi(\widehat{\mathcal{Z}}_0,\mathcal{O}_{\widehat{\mathcal{Z}}_0} (m\widehat{h}_0))=\chi(\mathcal{Z}_{0,\mathrm{red}},\mathcal{O}_{\mathcal{Z}_{0,\mathrm{red}}}(mh_0)),
\]
for all $m\in\ZZ$.
Then the Hilbert polynomial of the ideal sheaf $\mathcal{I} \subset \mathcal{O}_{\mathcal{Z}_0}$ defining $\mathcal{Z}_{0,\mathrm{red}}$ in $\mathcal{Z}_0$ is trivial and $\mathcal{Z}_0=\mathcal{Z}_{0,\mathrm{red}}$.

Finally, to show that $\mathcal{Z}$ is normal we only need to check Serre's condition $S_2$, since the singular locus of $\mathcal{Z}$ must be in codimension at least~2.
Since $\pi$ is flat, this follows immediately from~\citestacks{00ON} and~\citestacks{0337} applied to the flat local ring homomorphism of local rings $\pi_x^{\sharp}\colon\cO_{\mathbb{D},\pi(x)}\to \cO_{\mathcal{Z},x}$, for all $x\in \mathcal{Z}$.
\end{proof}

\begin{proof}[Proof of Proposition~\ref{prop:negfiber}]
The set theoretic fiber of $\pi_{-}\colon\cY_{-}\to\mathbb{D}$ over $0$ is equal to $\ov{\iota}(\ov{M}_{\PP^2,n})$ by Proposition~\ref{prop:tuttasola}. 
We claim that the hypotheses of Lemma~\ref{lem:NormalityImpliesReduced} hold with $\mathcal{Z}\to\mathbb{D}$ equal to $\pi_{-}\colon\cY_{-}\to\mathbb{D}$. 
In fact $\cY_{-}$ is integral, $\cY_{-}$ is smooth away from the fiber  over $0$, the (scheme theoretic) fiber over $0$ is smooth away from $\Sing \ov{M}_{S,n}$, and the  reduction of the fiber over $0$ (equal to $\ov{\iota}(\ov{M{_{\PP^2,n}}})$) is integral, and normal by Theorem~\ref{thm:embedding}. Hence Proposition~\ref{prop:negfiber} follows from Lemma~\ref{lem:NormalityImpliesReduced}.
\end{proof}

%%%%%%%%%%%%%%%%%%%%%

\section{Proof of the negative component result}\label{sec:MainResult}

The goal of this section is to prove Theorem \ref{thm:main1}: for a polarized HK manifold $(X,\lambda)$ of {\rm K3}$^{[n]}$-type, with $q_X(\lambda)=2$ and $\div(\lambda)=2$, the connected component $\Fix(\tau_{\lambda})_-$ of  the fixed locus $\Fix(\tau_\lambda)$ is a Fano manifold of index $3$. 

Let $\pi_{-}\colon\cY_{-}\to \mathbb{D}$ be as in Section~\ref{subsec:negzerofiber}, and let $\cY_{-}(0):=\pi_{-}^{-1}(0)$.
Thus  $\cY_{-}(0)\cong\overline{M}_{\PP^2,n}$ by Proposition~\ref{prop:negfiber}.  
Recall moreover that by Proposition~\ref{prop:negfiber} the variety $\cY_{-}$ is normal. 

\begin{prop}\label{prop:KM}
The normal variety $\cY_-$ is Gorenstein.
\end{prop}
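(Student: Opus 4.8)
The plan is to prove Gorenstein-ness by combining three facts: (i) the total space $\cY_-$ is normal (Proposition~\ref{prop:negfiber}) and $\pi_-\colon\cY_-\to\mathbb D$ is flat over a smooth curve, (ii) the general fiber $\cY_{-,t}=\Fix(\tau_{\lambda_t})_-$ for $t\neq 0$ is smooth, hence Gorenstein, and (iii) the special fiber $\cY_-(0)\cong\overline M_{\PP^2,n}$ is Gorenstein — this is exactly the content of Remark~\ref{rmk:AmpleLineBundleDUYP2}, where it is noted that $\lambda_{\PP^2}$ descends to a Cartier divisor and the anticanonical descends to $\cO(3\overline\lambda_{\PP^2})$, so $\overline M_{\PP^2,n}$ has a Cartier canonical divisor. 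Since $\pi_-$ is flat, Gorenstein-ness is fiberwise: a flat morphism whose fibers are all Gorenstein (locally complete intersection dualizing complex concentrated in one degree with invertible cohomology) has Gorenstein total space when the base is regular. Concretely, for $x\in\cY_-$ lying over $\pi_-(x)\in\mathbb D$, the local ring $\cO_{\cY_-,x}$ is flat over the regular (hence Gorenstein) local ring $\cO_{\mathbb D,\pi_-(x)}$, and the closed fiber of this map is $\cO_{\cY_{-,\pi_-(x)},x}$; by the standard ascent result for the Gorenstein property along flat local homomorphisms with Gorenstein base and Gorenstein closed fiber (\citestacks{0C03} / Matsumura), $\cO_{\cY_-,x}$ is Gorenstein.

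First I would reduce to a purely local statement: fix $x\in\cY_-$ and let $t_0=\pi_-(x)$. If $t_0\neq 0$ then $\cY_-$ is smooth at $x$ (the family is smooth away from the fiber over $0$), so there is nothing to prove. Hence assume $t_0=0$. Then $\cO_{\mathbb D,0}$ is a DVR, in particular regular, and the fiber ring is $\cO_{\cY_-(0),x}$. By Proposition~\ref{prop:negfiber}, $\cY_-(0)\cong\overline M_{\PP^2,n}$, and by Remark~\ref{rmk:AmpleLineBundleDUYP2} this variety is Gorenstein, so $\cO_{\cY_-(0),x}$ is a Gorenstein local ring. Next I would invoke flatness of $\pi_-$ (established in Subsection~\ref{subsec:negzerofiber}) to get that $\pi_x^\sharp\colon\cO_{\mathbb D,0}\to\cO_{\cY_-,x}$ is a flat local homomorphism, and apply the ascent criterion: if $(A,\mathfrak m)\to (B,\mathfrak n)$ is flat local with $A$ Gorenstein and $B/\mathfrak m B$ Gorenstein, then $B$ is Gorenstein. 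This gives that $\cO_{\cY_-,x}$ is Gorenstein for every $x$, i.e. $\cY_-$ is Gorenstein.

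The potential subtlety — and the step I would check most carefully — is whether Remark~\ref{rmk:AmpleLineBundleDUYP2} really gives Gorenstein-ness of $\overline M_{\PP^2,n}$ in the strong sense needed (Cohen--Macaulay with invertible dualizing sheaf), and not merely $\QQ$-Gorenstein with Cartier canonical class. Here one uses that $\overline M_{\PP^2,n}$ is normal with rational (in fact canonical) singularities, hence Cohen--Macaulay, and the descent argument of Remark~\ref{rmk:AmpleLineBundleDUYP2} exhibits $\omega_{\overline M_{\PP^2,n}}\cong\cO(-3\overline\lambda_{\PP^2})$ as a genuine line bundle; together these say the dualizing sheaf is invertible, which is precisely the Gorenstein condition. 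An alternative, if one prefers to avoid relying on the precise form of that remark, is to observe directly that $\overline M_{\PP^2,n}$ is analytically locally isomorphic to $\overline M_{S,n/2}$ (as noted in Subsection~\ref{subsec:DUYP2}), and the latter has Gorenstein singularities by Lemma~\ref{lem:DUYdivisor}\ref{enum:DUYdivisor1}; so $\overline M_{\PP^2,n}$ is Gorenstein, and then proceed with the ascent argument as above. Either way, once the special fiber is known to be Gorenstein, flatness over the DVR delivers Gorenstein-ness of $\cY_-$ with no further work.
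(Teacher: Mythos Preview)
Your proposal is correct and follows essentially the same approach as the paper: observe that the special fiber $\cY_-(0)\cong\overline{M}_{\PP^2,n}$ is Gorenstein (Remark~\ref{rmk:AmpleLineBundleDUYP2}), the general fibers are smooth, $\pi_-$ is flat over a regular base, and then apply the standard ascent of the Gorenstein property along flat maps. The paper phrases the last step via \citestacks{0C05} and \citestacks{0C11} rather than the local version you cite, but this is the same argument.
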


\begin{proof}
As observed in Remark~\ref{rmk:AmpleLineBundleDUYP2}, $\overline{M}_{\PP^2,n}\cong\cY_{-}(0)$ has Gorenstein singularities.
We can then use~\citestacks{0C05}: the morphism $\pi_-$ is flat and has Gorenstein fibers, hence it is Gorenstein.
By~\citestacks{0C11} we then deduce that $\cY_-$ is Gorenstein, as we wanted.
\end{proof}

We consider the two line bundles $\omega_{\cY_-/\mathbb{D}}$ and $\cL |_{\cY_-}$ on $\cY_-$.  
Then $\mathrm{c}_1(\cL |_{\cY_{-}(0)})=\overline{\lambda}_S|_{\overline{M}_{\PP^2,n}}$ is indivisible and (see Remark \ref{rmk:AmpleLineBundleDUYP2})
\[
\omega_{\cY/\mathbb{D}}^\vee |_{\cY_{-}(0)} \cong \left(\cL |_{\cY_{-}(0)}\right)^{\otimes 3}.
\]
Since $\pi_-\colon\cY_-\to \mathbb{D}$ is flat and projective with geometrically integral fibers over the noetherian scheme $\mathbb{D}$, the relative Picard scheme 
\[
\mathrm{Pic}_{\cY_-/\mathbb{D}}^0\lra \mathbb{D}.
\]
 exists (see~\cite[Theorem 4.8]{Kleiman:Picard}).
Moreover, on the special fiber we have
\[
H^1(\cY_{-}(0),\cO_{\cY_{-}(0)})=H^2(\cY_{-}(0),\cO_{\cY_{-}(0)})=0.
\]
By semicontinuity, since all fibers of $\pi_-$ outside $t=0$ are smooth, this equality holds for all fibers.
In particular, by~\cite[Proposition 5.19]{Kleiman:Picard}, the morphism $\mathrm{Pic}_{\cY_-/\mathbb{D}}^0\to \mathbb{D}$ is smooth and projective of relative dimension~0, and hence it is an isomorphism.
This implies both that the line bundle $\cL$ is indivisible on each fiber, since it is so on one fiber, and that the two line bundles $\omega_{\cY_-/\mathbb{D}}^\vee$ and $\cL^{\otimes 3} |_{\cY_-}$ on $\cY_-$ coincide on each fiber, since they coincide on one fiber.

Since  $\omega_X$ and the line bundle $L$ on $X$ such that $c_1(L)=\lambda$  make sense for all $(X,\lambda)$ satisfying the hypotheses of Theorem ~\ref{thm:main1}, one gets the validity of the isomorphism $\omega_{\Fix(\tau_{\lambda})_-}\cong (L^{\vee})^{\otimes 3}\vert_{\Fix(\tau_{\lambda})_-}$  for all such $(X, \lambda)$.

%%%%%%%%%%%%%%%%%%%%%%%%%%%%%%%%%%%%%%%%

\section{Proof of the positive component result}\label{sec:cubic4folds}

The goal of the present section is to examine the positive component $\Fix(\tau_{\lambda})_+$ of the fixed locus in the case $n=4$, and to prove  Theorem~\ref{thm:main2}.
The proof is similar to that of Theorem~\ref{thm:main1}, with the difference that we need to describe $\pi_{+}\colon \cY_{+}\to\mathbb{D}$ (see~\eqref{eq:FixedFamilyFlat}), in particular the fiber over $0$. 

\subsection{Birational models for the positive components of the fixed locus}\label{subsection:SecondComponentDegree2}

We keep the notation of Section~\ref{subsec:DUYK3}, with $d=1$, $n=4$.
Let $(S,h)$ be a polarized K3 surface such that $\NS(S)=\ZZ h$, $h^2=2$. Denote by $f\colon S\to \P^2$ the associated double cover, ramified on a very general sextic curve $\mathsf{\Gamma}$, and by $\tau_S$ the covering involution on $S$.
We consider the moduli space $M_{S,4}$ together with the natural $\mu_2$-action given by the involution $\tau_M$ induced from $S$.
In the present section we examine the fixed locus of $\tau_M$ via a Mukai flop relating  $M_{S,4}$ and $S^{[4]}$ .

\subsubsection*{A birational model of $M_{S,4}$}

We let $\cO_S(d)$ be the invertible sheaf on $S$ such that $c_1(\cO_S(d))=dh$. 
Let $[W]\in S^{[4]}$. Then
\begin{equation*}
\chi_S(\cI_W,\cO_S(-1))=-\la (1,0,-3),(1,-h,2)\rangle=-1.
\end{equation*}
By Serre duality 
$\Ext^2_S(\cI_W,\cO_S(-1)\cong \Hom_S(\cO_S(-1),\cI_W)^{\vee}$. Since we also have $\Hom_S(\cI_W,\cO_S(-1))=0$, it follows that
\begin{equation}\label{extacca}
\dim\Ext^1_S(\cI_W,\cO_S(-1))=1+\dim H^0(S,\cI_W(1)).
\end{equation}
In particular there exists  a non trivial extension 
\begin{equation}\label{nonspezza}
0\lra\cO_S(-1)\lra\cF\lra \cI_W\to 0. 
\end{equation}

\begin{prop}\label{prop:hirschen}
Let  $\cF$ be a sheaf with $v(\cF)=(2,-h,-1)$. Then $\cF$ is stable if and only if it fits into a non trivial exact sequence  as in~\eqref{nonspezza} (recall that $[W]\in S^{[4]}$).
\end{prop}

\begin{proof}
We start by proving that if $\cF$  fits into a non trivial exact sequence as in~\eqref{nonspezza} then  $\cF$ is stable. Let $\cG\hra\cF$ be a subsehaf of rank $1$. If 
$\cG\hra\cO_S(-1)$ then clearly $\cG\subset\cF$ does not desemistabilize $\cF$. Thus we may assume that the composition 
$\cG\hra\cF\lra \cI_W$ is non zero. If the map $\cG\lra \cI_W$  drops rank in codimension $1$ then, away from codimension $2$, we get an isomorphism $\cG\sim\cO_S(-k)$ for some $k>0$, and hence $\cG\subset\cF$ does not desemistabilize $\cF$. Thus we may assume that the  map $\cG\lra \cI_W$  is an isomorphism  in codimension $1$, and hence  $\cG=\cI_{W_0}\subset \cI_W$, i.e.~$W_0\supset W$. Let $\cQ$ be the cokernel $\cI_W/ \cI_{W_0}$. It follows that we have an exact sequence
\begin{equation*}
0\lra\cO_S(-1)\lra\cF/\cG\lra \cQ\to 0. 
\end{equation*}
Since $\cO_S(-1)$ is locally free and $\cQ$ is Artinian the above exact sequence is trivial, i.e.~$\cQ=Tors(\cF/\cG)$. Let $f\colon\cF\to \cF/\cG$ be the quotient homomorphism, and let $\wt{\cG}\coloneqq f^{-1}(\cQ)$. Then the composition 
$\wt{\cG}\hra\cF\lra \cI_W$ is an isomorphism. This contradicts the hypothesis that the exact sequence in~\eqref{nonspezza} does not split.

Now we prove that if $\cF$ is stable then it fits into a non trivial exact sequence  as in~\eqref{nonspezza}. We have
\begin{equation*}
\chi_S(\cO_S(-1),\cF)=-\la (1,-h,2),(2,-h,-1)\rangle=1.
\end{equation*}
By Serre duality $\Ext^2_S(\cO_S(-1),\cF)\cong \Hom_S(\cF,\cO_S(-1))^{\vee}$, and the latter space vanishes by stability of $\cF$. It follows that 
$\dim\Hom_S(\cO_S(-1),\cF)\ge 1$.
Let  $\alpha\colon\cO_S(-1)\to\cF$ be  a non zero map. By stability of $\cF$, the 
cokernel of $\alpha$ is locally free in codimension $1$ and it has trivial $c_1$. Arguing as above we get that  $\coker(\alpha)$ is torsion free, and hence isomorphic to $\cI_{W}$ for some $0$-dimensional subscheme $W\subset S$. By a Chern class computation, the length of $W$ is $4$. Thus $\cF$
fits into a non trivial exact sequence  as in~\eqref{nonspezza}. 
\end{proof}

Let $W\in S^{[4]}$ be a general point. Then $W$ is not contained in any curve of the linear system $|\cO_S(1)|$. By~\eqref{extacca} there is a unique   sheaf $\cF$ fitting into a non trivial extension as in~\eqref{nonspezza}. 
One defines a birational map
\begin{equation}\label{modbir}
\psi\colon S^{[4]} \dashrightarrow  M_{S,4}
\end{equation}
by mapping  a general $[W]\in S^{[4]}$ to the moduli point of the corresponding sheaf $\cF$. 
Let $Z'\subset S^{[4]}$ be the subset parametrizing subschemes $W$ such that $h^0(S,\cI_W(1))>0$, i.e.~$\dim\Ext^1_S(\cI_W,\cO_S(-1))>1$  (see~\eqref{extacca}. 
Let $Z\subset M_{S,4}$ be the subset parametrizing sheaves $\cF$ such that $h^0(S,\cF(1))>1$.
Note that if $[W]\in Z'$ then $h^0(S,\cI_W(1))=1$, and that if $[\cF]\in Z$ then $h^0(S,\cF(1))=2$. We have dual $\PP^2$-bundles
\begin{equation}\label{fibratopiani}
\xymatrix{ Z'\ar[dr]^{\eta'}   &  &  Z \ar[dl]_{\eta} \\ 
   & M_{S,h}(0,h,-5) & }
\end{equation}
defined as follows. Given $[W]\in Z'$, the map $\eta'$ associates to $[W]$ the isomorphism class of $\cI_{W,C_W}\subset\cO_{C_W}$ in 
$M_{S,h}(0,h,-5)$. Given $[\cF]\in Z$, the map $\eta$ associates to $[\cF]$ the isomorphism class of the cokernel of the map $\cO_S(-1)\otimes H^0(S,\cF(1))^{\vee}\to\cF$.

\begin{prop}\label{prop:ioflop}
The birational map $\psi$ is the flop of the $\PP^2$-bundle $\eta'\colon Z'\to M_{S,h}(0,h,-5)$. The inverse $\psi^{-1}$ is the flop of the $\PP^2$-bundle $\eta\colon Z\to M_{S,h}(0,h,-5)$.
\end{prop}

\begin{proof}
Given the results proved above this is a standard consequence. Alternatively it follows from the general results in~\cite[Section~9]{BM:walls}.
\end{proof}

Let $\pi'\coloneqq\Bl_{Z'}(S^{[4]})\to S^{[4]}$ and $\pi\colon \Bl_Z(M_{S,4})\to M_{S,4}$ be the blow-ups  with centers $Z'$ and $Z$ respectively. By Proposition~\ref{prop:ioflop} the rational map  
$\wt{\psi}\colon \Bl_{Z'}(S^{[4]})\dra\Bl_Z(M_{S,4})$ defined by $\psi$ is an isomorphism, and it fits into a commutative diagram 
\begin{equation}\label{scoppiocontraggo}
\xymatrix{\Bl_{Z'}(S^{[4]}) \ar[d]_{\pi'} \ar[rr]^{\wt{\psi}}  &  &  \Bl_Z(M_{S,4}) \ar[d]_{\pi} \\ 
S^{[4]}\ar@{..>}[rr]^{\psi}   & & M_{S,4}}
\end{equation}
We note that the covering involution $\tau_S$ of $f\colon S\to\PP^2$ induces an involution $\tau_{S^{[4]}}$ of $S^{[4]}$, i.e.~a $\mu_2$ action on $S^{[4]}$.

\begin{prop}\label{prop:azionicompat}
The birational map $\psi$ in~\eqref{modbir} (see also~\eqref{scoppiocontraggo}) is equivariant with respect to the two $\mu_2$-actions. 
\end{prop}

\begin{proof}
Let $[W]\in S^{[4]}$ be a general point. Then  $\psi([W])=[\cF]$, where $\cF$ fits into the unique non split exact sequence in~\eqref{nonspezza}. The involution $\tau_{S^{[4]}}$ maps $[W]$ to $[\tau_S(W)]$ and $\psi$ maps $[\tau_S(W)]$ to the unique sheaf $\cG$ fitting into a non split exact sequence $0\to\cO_S(-1)\to\cG\to \cI_{\tau_S(W)}\to 0$. Since $\tau_S(\cF)$ is such a sheaf, we get that the involution $\tau_M$ of $M_{S,4}$ maps $\psi([W])$ to $\psi([\tau_S(W)])$. This proves the proposition.
\end{proof}

\subsubsection*{Fixed loci of $\tau_{S^{[4]}}$ and of $\tau_{M_{S,4}}$}

Abusing notation, we denote by $\mathsf{\Gamma}$ both the smooth sextic plane curve and its inverse image in $S$.
Let $Y_{\pm}',T'\subset S^{[4]}$ be defined by
\begin{equation*}
Y_+':=\mathsf{\Gamma}^{(4)},\quad Y_-':=f^*(\P^2)^{[2]},\quad
T':=\ov{\{\{x_1,x_2,y,\tau_S(y)\} \,:\, x_1\not= x_2\in
\mathsf{\Gamma},\ y\in(S\setminus\mathsf{\Gamma})\}},
\end{equation*}
where the \lq\lq overline\rq\rq\ means \lq\lq closure\rq\rq.
The decomposition into irreducible components of $\Fix(\tau_{S^{[4]}})$ is as follows:
\begin{equation*}%\label{eq:FixedCptsHilb4}
\Fix(\tau_{S^{[4]}})=Y_+'\sqcup Y_-'\sqcup T'.
\end{equation*}
It is a disjoint union because $\Fix(\tau_{S^{[4]}})$ is smooth. 

By~\cite[Subsection~5.1]{involutions1} there is a bijective correspondence between the sets of irreducible components of $\Fix(\tau_{S^{[4]}})$ and of $\Fix(\tau_{M_{S,4}})$. 
We let $Y_{\pm},T\subset M_{S,4}$ be the irreducible components of $\Fix(\tau_{M_{S,4}})$ corresponding to 
$Y'_{\pm},T'$ respectively. 
Thus the decomposition into irreducible components of $\Fix(\tau_{M_{S,4}})$ is as follows:
\begin{equation*}%\label{eq:FixedCptsHilb4}
\Fix(\tau_{M_{S,4}})=Y_+\sqcup Y_-\sqcup T.
\end{equation*}
We recall the definition of the bijective correspondence. Since $Y'_{+}$ and $T'$ are not contained in the center $Z$ of the flop $\psi$, their strict transforms in $M_{S,4}$ are well-defined;  
$Y_{+},T\subset M_{S,4}$ are the strict transforms $\psi_*Y'_{+},\psi_*T'$ respectively. The irreducible component $Y'_{-}$ is contained in $Z'$. In order to describe $Y_{-}$ we discuss the actions of $\tau_{S^{[4]}}$ and $\tau_{M_{S,4}}$ on $Z'$ and $Z$ respectively. First note that we have an involution
\begin{equation*}\label{azionequattro}
\begin{matrix}
M_{S,h}(0,h,-5) & \overset{\tau_{M(\vv)}}{\lra} & M_{S,h}(0,h,-5) \\
[\cE] & \longmapsto & [\tau_S^{*}\cE]
\end{matrix}
\end{equation*}
where $\vv\coloneqq (0,h,-5)$.

\begin{prop}\label{prop:fissidimdue}
The fixed locus of the involution $\tau_{M(\vv)}$ has two irreducible components, namely
\begin{eqnarray*}
\Sigma & \coloneqq & \{[\iota_{C,*}(\cO_C(-2C))]\,:\, C\in|h|,\ 
\text{$C\overset{\iota_{C}}{\longhookrightarrow} S$ the inclusion}\}, \\
\Omega & \coloneqq & \{[\iota_{C,*}(\cO_C(-C\cdot\Gamma))]\,:\, C\in|h|\}
\end{eqnarray*}
\end{prop}

\begin{proof}
One defines an isomorphism $M(\vv)\xrightarrow{\sim} M_{S,h}(0,h,-1)$ by mapping $[\cE]$ to $[\cE\otimes\cO_S(2h)]$. Under this isomorphism, the involution $\tau_{M(\vv)}$ corresponds to the involution
$\tau$ of~\cite[Proposition~4.1]{involutions1}, and the result follows from that proposition.
\end{proof}

We have $Y'_{-}=(\eta')^{-1}(\Sigma)$, and hence according to~\cite[Proposition~5.1]{involutions1}
\begin{equation}\label{eccoymeno}
Y_{-}=\eta^{-1}(\Sigma)=\{[\cF] \,:\, 0\to \cO_S(-1)^{ 2}\to \cF\to \iota_{C,*}(\cO_C(-2C))\to 0, \text{some $C\in|h|$} \}.
\end{equation}
A word about notation. As a general rule,  letters with superscript, respectively without superscript, denote subsets of $S^{[4]}$, resp. $M_{S,4}$, which correspond via the birational map $\psi$. The symbols for $Y_{\pm}$ have been chosen because they are related to the fibers $\cY_{\pm}(0)$ of $\pi_{\pm}\colon \cY_{\pm}\to\mathbb{D}$ over $0$. 
More precisely $\varphi(Y_{\pm})=\cY_{\pm}(0)$, where $\phi\colon M_{S,4}\to\ov{M}_{S,4}$ is the contraction of Lemma~\ref{lem:DUYdivisor}. 
We will prove this in stages, and the equality $\varphi(Y_{+})=\cY_{+}(0)$ is a key step in the proof of Theorem~\ref{thm:main2}.

\subsubsection*{Geometry of the irreducible components of $\tau_{M_{S,4}}$}
First we analyze  $Y_{+}\subset M_{S,4}$. The intersection $Z'\cap Y_+'$ parametrizes length $4$ subschemes $W\subset\mathsf{\Gamma}$  which lie on a line $R$ (here we view $\mathsf{\Gamma}$ as a plane sextic). 
One checks that the intersection is transverse. Moreover we have an isomorphism
\begin{equation*}
Z'\cap Y_+' \overset{\sim}{\lra} \mathsf{\Gamma}^{(2)}
\end{equation*}
by mapping $[W]$ to the residual of $W$ in 
$R\cap\mathsf{\Gamma}$. 

\begin{prop}\label{prop:ipspiublow}
The restriction of $\psi^{-1}$ to $Y_{+}$  defines a regular map
\begin{equation*}
Y_{+}\overset{\psi^{-1}_{|Y_{+}}}{\lra} Y'_{+}
\end{equation*}
which is the blow-up of $Z'\cap Y_+'$. 
\end{prop}

\begin{proof}
Let $\wh{Y}'_{+}\subset\Bl_{Z'}(S^{[4]})$ be the strict transfrom of $Y'_{+}$. The restriction of $\pi'$ to 
$\wh{Y}'_{+}$ defines a map $\wh{Y}'_{+}\to Y'_{+}$ which is the blow-up of $Z'\cap Y_+'$. The map 
$\wh{Y}'_{+}\to Y_{+}$ given by the restriction of $\pi\circ\wt{\psi}$ is bijective. 
Surjectivity is clear, and also injectivity over $Y_{+}\setminus Z$. Injectivity over $Y_{+}\setminus Z$ follows from~\cite[Proposition~5.2]{involutions1}. Since $Y_{+}$ is smooth, it follows that the restriction of $\pi\circ\wt{\psi}$ to $\wh{Y}'_{+}$ defines an isomorphism $\wh{Y}'_{+}\xrightarrow{\sim} Y_{+}$. 

Alternatively, the result is obtained by analyzing the wall-crossing in the space of stability conditions~\cite[Section~9]{BM:walls}.
\end{proof}

We will need the following result on sheaves parametrized by points of $Y_+$.

\begin{prop}\label{prop:Y+Explicit}
Let $[\cF] \in M_{S,4}$. 
\begin{enumerate}[{\rm (a)}]
\item\label{enum::Y+Explicit1} $[\cF] \in Y_+$ if and only if there exists an exact sequence as in~\eqref{nonspezza} with $W\in\mathsf{\Gamma}^{(4)}$. Moreover such a $W$ is unique.
\item\label{enum::Y+Explicit2} If  $[\cF] \in Y_+$ is general then $\cF$ is locally free.
\item\label{enum::Y+Explicit3} If $[\cF] \in Y_+$ is not locally free then $\cF^{\vee\vee}/\cF$ is of length~1.
\end{enumerate}
\end{prop}

\begin{proof}
We prove~\ref{enum::Y+Explicit1}. 
If $[\cF]\notin Z$ the statement follows at once from Proposition~\ref{prop:hirschen}.
Let us suppose that $[\cF]\in Z$. If there exists an exact sequence as in~\eqref{nonspezza} with $W\in\mathsf{\Gamma}^{(4)}$, then it is clear that $[\cF] \in Y_+$. Now suppose that $[\cF]\in Y_{+}\cap Z$. Since $Y_{-}=\eta^{-1}(\Sigma)$ and $Y_{+}\cap Y_{-}=\es$, it follows that $\eta([\cF])\in \Omega$, where $\Sigma,\Omega$ are as in Proposition~\ref{prop:fissidimdue}. Thus $\eta([\cF])=[\iota_{C,*}(\cO_C(-C\cdot\Gamma))]$. Note that the unicity of $W$ follows from Proposition \ref{prop:ipspiublow}.

We now prove~\ref{enum::Y+Explicit2}. 
Let $[\cF] \in Y_+$ be general. Then it fits into a non trivial exact sequence~\eqref{nonspezza} where $W\in {\mathsf\Gamma}^{(4)}$ is general, and such an extension is unique.  Since $W\in {\mathsf\Gamma}^{(4)}$ is general the extension is locally free. 
(Explicitly: if $W$ is reduced and no $3$ points of $W$ lie on a line, then $\cF$ is locally free.)

In order to prove~\ref{enum::Y+Explicit3} we observe that if $T\coloneqq\cF^{\vee\vee}/\cF$ is of length~$>1$, then it must have length~2 and hence $\cF^{\vee\vee}$ is isomorphic to the unique stable vector bundle with Mukai vector $(2,-h,1)$, i.e.~$f^*\Omega_{\PP^2}(1)$. 
Let us choose a non-zero morphism $\cO_S(-1)\to \cF$.
We have the following diagram:
\begin{equation*}
\xymatrix{ 
&0\ar[d]&0\ar[d]&&\\
&\cO_S(-1)\ar[r]^{=}\ar[d]&f^*\cO_{\PP^2}(-1)\ar[d]&&\\
0\ar[r]&\cF\ar[r]\ar[d]&f^*\Omega_{\PP^2}(1)\ar[d]\ar[r]&T\ar[r]\ar[d]^{=}&0\\
0\ar[r]&\cI_W\ar[r]\ar[d]&f^*\cI_{p}\ar[d]\ar[r]&T\ar[r]&0\\
&0&0&&
}
\end{equation*}
where $p\in\PP^2$ is a closed point.
Since $W\in\mathsf{\Gamma}^{(4)}$, this is impossible since the scheme structure of $f^*\cI_p$ is not in $\mathsf{\Gamma}$.
\end{proof}

Next we describe  $Y_-$ and the sheaves that it parametrizes. 

\begin{prop}\label{prop:ypsilonmeno}
Pull-back defines an isomorphism
\begin{equation*}
\begin{matrix}
M_{\PP^2,4} & \overset{\sim}{\lra} & Y_{-}\\
[\cE] & \longmapsto & [f^{*}\cE]
\end{matrix}
\end{equation*}
\end{prop}

\begin{proof}
The pull-back map $i\colon M_{\PP^2,4}\to M_{S,4}$ is an embedding, and the image is contained in $\Fix(\tau_{M_{S,4}})$.  Since $M_{\PP^2,4}$ is irreducible and projective of dimension $4$, it follows that $i(M_{\PP^2,4})$ is an irreducible component of $\Fix(\tau_{M_{S,4}})$. Thus $i(M_{\PP^2,4})$ equals one of $Y_{\pm},T$.

Let $[\cE]\in M_{\PP^2,4}$. We have an exact sequence
\begin{equation*}
0\lra \cO_{\PP^2}(-1)^{\oplus 2}\lra \cE\lra i_{R,*}(-2)\lra 0,
\end{equation*}
where $i_R\colon  R\hra\PP^2$ is the inclusion of a line. Pulling back to $S$, we get
an exact sequence
\begin{equation*}
0\lra \cO_{S}(-1)^{\oplus 2}\lra f^{*}\cE\lra i_{C,*}(-2\omega_C)\lra 0,
\end{equation*}
where $i_C\colon  C\hra S $ is the inclusion of a divisor $C\in|\cO_S(1)|$. Thus, $[f^{*}\cE]$ belongs to $Y_{-}$ by~\eqref{eccoymeno}.
\end{proof}

Lastly, we deal with the sheaves parametrized by $T$.

\begin{prop}\label{prop:eccoti}
Let $x_1,x_2\in{\mathsf\Gamma}$ be distinct points, and let $\cF$ be a sheaf on $S$ 
fitting into an exact sequence
\begin{equation}\label{eccoeffe}
0\longrightarrow \cF \longrightarrow f^*\Omega_{\P^2}(1) \xlongrightarrow{u} k(x_1)\oplus k(x_2)\longrightarrow 0.
\end{equation}
Then $[\cF]\in T$, and in fact the general point of  $T$ is equal to $[\cF]$ where $\cF$ is as above.
\end{prop}

\begin{proof}
Let $\cF$ be a sheaf as in~\eqref{eccoeffe}. Then $\cF$ is slope stable and $v(\cF)=(2,-h,-1)$, and hence $[\cF]\in M_{S,4}$. Let ${\mathsf\Gamma}^{(2)}_{*}\subset  {\mathsf\Gamma}^{(2)}$ be the complement of the diagonal, and 
let $V\to{\mathsf\Gamma}^{(2)}$ be the $\PP^1\times\PP^1$ bundle with fiber $\PP(f^*\Omega_{\P^2}(1)(x_1))\times(f^*\Omega_{\P^2}(1)(x_1))$ over $(x_1,x_2)$. 
Given $z=(x_1,x_2,l_1,l_2)\in V$ we let $\cF_z$ be the sheaf fitting into an exact sequence  as in~\eqref{eccoeffe} with the kernel of $u$ at $x_i$ equal to $l_i$. We have an injective map
\begin{equation*}
\begin{matrix}
V & \overset{\alpha}{\longhookrightarrow} & M_{S,4} \\
z & \longmapsto & [\cF_z]
\end{matrix}
\end{equation*}
Since $[\cF_z]\in\Fix(\tau_{M_{S,4}})$ and $V$ is irreducible of dimension $4$, the closure $\ov{\alpha(V)}$ is one of $Y_{\pm},T$ (and $\alpha(V)$ is an open dense subset of  $\ov{\alpha(V)}$). Since all points of $\alpha(V)$ parametrize non locally free sheaves, while a general point of $Y_{+}$, or of $Y_{-}$, parametrizes a locally free sheaf, it follows that $\ov{\alpha(V)}=T$. 
\end{proof}

The result below follows at once from Proposition~\ref{prop:eccoti}. 

\begin{coro}\label{coro:eccoti}
Let $[\cF]\in T$. Then $l(\cF^{\vee\vee}/\cF)=2$.
\end{coro}

\subsubsection*{Computations with divisor classes}
We describe $\NS(S^{[4]})$ and $\NS(M_{s,4})$ via Mukai's map.
Let $w:=(1,0,-3)$. We identify $S^{[4]}$ with the moduli space of sheaves on $S$ with Mukai vector $w$ by associating to $[Z]\in S^{[4]}$ the  sheaf $\cI_Z$. 
Let $h_{S^{[4]}},\delta_{S^{[4]}}\in H^2(S^{[4]};\ZZ)$ be the pull-back of the symmetrization $h^{(4)}$ of the class $h$ via the 
Hilbert-Chow morphism $S^{[4]}\to S^{(4)}$ and  the class such that $2\delta_{S^{[4]}}$ is the divisor parametrizing non reduced subschemes respectively. Then
\begin{equation*}
h_{S^{[4]}}:=\theta_w(0,h,0),\quad \delta_{S^{[4]}}:=\theta_w(1,0,3).
\end{equation*}
In fact, given our choice of map $\theta_w$ (see~\ref{mappatheta}) the equalities above follow from a straightforward computation (see~\cite[p.630]{Kieran:weight2}).
Let $\lambda_S,\delta\in \NS(M_{S,4})$ be the elements defined in Equation~\eqref{lamdel}.

\begin{prop}\label{prop:NefConeHilb4}
We have
\begin{equation}\label{nsazione}
\psi_{*}(h_{S^{[4]}}) = 2 \lambda_S - \delta, \qquad \psi_{*}(\delta_{S^{[4]}}) = 3 \lambda_S - 2\delta,
\end{equation}
and $\psi^{-1}$ is associated to the nef divisor $4 \lambda_S - \delta$. 
\end{prop}

\begin{proof}
Let $R\colon H^*(S,\ZZ)\to H^*(S,\ZZ)$ be the reflection in the $(-2)$-vector $v(\cO_S(-1))$. Then $R(w)=v_4$, and hence $R(w^{\bot})=v_4^{\bot}$. Identifying $H^2(S^{[4]},\ZZ)$ and $H^2(M_{S,4},\ZZ)$ with $w^{\bot}$ and $v_4^{\bot}$ respectively via the maps $\theta_w$ and $\theta_v$, the map $\psi_{*}$ is given by the reflection $R$. A straightforward computation gives the equalities in~\eqref{nsazione}.

In order to prove the last statement it suffices to prove that the restriction of $5h_{S^{[4]}}-2\delta_{S^{[4]}}=\psi^{*}(4\lambda_S-\delta)$ to the general fiber of the fibration $\eta'$ in~\eqref{fibratopiani} is trivial. The general fiber is identified with a complete linear system of degree $4$ on a smooth curve $C$ of genus $2$, which we identify with $\PP^2$. The restriction of $h_{S^{[4]}}$ to the plane is $\cO_{\PP^2}(2)$, and the restriction of $2\delta_{S^{[4]}}$ is the subset of the linear system parametrizing non reduced divisors. Since the complete linear system maps $C$ birationally to a quartic curve with a point of order $2$, which has dual curve of degree $10$, we get that the restriction of $2\delta_{S^{[4]}}$ to $\PP^2$ has degree $10$. This finishes the proof.
\end{proof}

\begin{lemm}\label{lem:CanonicalDivisors}
We have:
\[
\omega_{Y_+'} = (3 h_{S^{[4]}} - \delta_{S^{[4]}})\vert_{Y_+'} \quad 
\text{ and }\quad \omega_{Y_+} = (3 \lambda_S - \delta)\vert_{Y_+}.
\]
\end{lemm}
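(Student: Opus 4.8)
The plan is to compute the canonical bundle of each of the two explicit varieties $Y_+'=\mathsf{\Gamma}^{(4)}$ and $Y_+\cong\mathrm{Bl}_{\mathsf{\Gamma}^{(2)}}\mathsf{\Gamma}^{(4)}$ directly, and then match the answers with the claimed restrictions of the stated Néron--Severi classes using Proposition~\ref{prop:NefConeHilb4}. For the first equality, $\mathsf{\Gamma}$ is a smooth plane sextic, hence a curve of genus $10$ with $\omega_{\mathsf{\Gamma}}\cong\cO_{\mathsf{\Gamma}}(3)$ (the restriction of $\cO_{\PP^2}(3)$). The symmetric product $\mathsf{\Gamma}^{(4)}$ of a smooth curve is smooth, and its canonical bundle is standard: writing $\pi_4\colon\mathsf{\Gamma}^4\to\mathsf{\Gamma}^{(4)}$ for the quotient and $\mathsf{\Delta}$ for the (reduced) big diagonal, one has $\pi_4^*\omega_{\mathsf{\Gamma}^{(4)}}=\omega_{\mathsf{\Gamma}^4}(-\widetilde{\mathsf{\Delta}})$ where $\widetilde{\mathsf{\Delta}}$ is the ramification divisor; equivalently, in terms of the standard generators of $\mathrm{Pic}(\mathsf{\Gamma}^{(4)})$, $\omega_{\mathsf{\Gamma}^{(4)}}$ is expressed via the class pulled back from $\mathsf{\Gamma}$ together with the diagonal class. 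I would then identify the restrictions $h_{S^{[4]}}\vert_{Y_+'}$ and $\delta_{S^{[4]}}\vert_{Y_+'}$: since $h_{S^{[4]}}$ defines the Hilbert--Chow morphism $S^{[4]}\to S^{(4)}$, its restriction to $\mathsf{\Gamma}^{(4)}\subset S^{(4)}$ (note $\mathsf{\Gamma}^{(4)}$ maps isomorphically to its image in $S^{(4)}$, so $Y_+'$ lies in the smooth locus of HC in the appropriate sense) is the polarization $\cO_{\PP^2}(1)$ summed over the four points, i.e.\ the class $h_{\mathsf{\Gamma}^{(4)}}$ induced from $\cO_{\mathsf{\Gamma}}(1)$; and $2\delta_{S^{[4]}}$ being the non-reduced divisor, its restriction is the big diagonal of $\mathsf{\Gamma}^{(4)}$. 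Plugging these into the canonical bundle formula for $\mathsf{\Gamma}^{(4)}$ and using $\omega_{\mathsf{\Gamma}}=\cO_{\mathsf{\Gamma}}(3)$ should give exactly $\omega_{Y_+'}=(3h_{S^{[4]}}-\delta_{S^{[4]}})\vert_{Y_+'}$.

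For the second equality I would use the blow-up description~\eqref{ipspiublow}: $Y_+\cong\mathrm{Bl}_{\mathsf{\Gamma}^{(2)}}\mathsf{\Gamma}^{(4)}$ with center the smooth surface $\mathsf{\Gamma}^{(2)}$ (smooth since $\mathsf{\Gamma}$ is a curve), which sits inside the smooth fourfold $\mathsf{\Gamma}^{(4)}$ as a codimension-$2$ subvariety. By the blow-up formula, $\omega_{Y_+}=\beta^*\omega_{Y_+'}\otimes\cO_{Y_+}(E)$ where $\beta\colon Y_+\to Y_+'$ is the blow-down and $E$ the exceptional divisor. Since $\psi$ is an isomorphism in a neighbourhood of $Y_+$ away from the flopped locus (it is the Mukai flop of $Z'$, and $Y_+$ is the strict transform of $Y_+'$), and since $Y_+'\cap Z'$ is exactly the surface $\mathsf{\Gamma}^{(2)}$ that gets blown up, the class of $E$ on $Y_+$ is $\psi^*(\delta_4)\vert_{Y_+}$ — more precisely, the exceptional divisor of the flop restricts to $E$, and $\delta=\delta_4$ is (a multiple of) that exceptional-divisor class. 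So I would translate $\beta^*(3h_{S^{[4]}}-\delta_{S^{[4]}})\vert_{Y_+'}+E$ through $\psi_*$ using~\eqref{nsazione}: $\psi_*(3h_{S^{[4]}}-\delta_{S^{[4]}})=3(2\widetilde\lambda-\delta)-(3\widetilde\lambda-2\delta)=3\widetilde\lambda-\delta$, and then account for the correction coming from the fact that $\psi_*$ is computed on $S^{[4]}$ whereas we want classes on $M_{S,4}$ restricted to $Y_+$, with $E$ contributing the appropriate multiple of $\delta\vert_{Y_+}$. Carefully bookkeeping the discrepancy divisor of the flop (the flop is crepant on the ambient sixteenfolds, but $Y_+'$ and $Y_+$ are not flop-invariant, so there is a genuine discrepancy between $\omega_{Y_+'}$ pulled back and $\omega_{Y_+}$, measured by $E$) should yield $\omega_{Y_+}=(3\widetilde\lambda-\delta)\vert_{Y_+}$.

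The main obstacle I anticipate is the second computation: one has to be careful about how the Mukai flop acts on the strict transforms $Y_\pm$, and in particular to verify that the exceptional divisor $E$ of $\beta\colon Y_+\to Y_+'$ is precisely the restriction to $Y_+$ of (a suitable multiple of) $\delta$, rather than some combination of $\widetilde\lambda$ and $\delta$. This requires understanding the wall-crossing geometry of~\cite[Section~9]{BM:walls} well enough to pin down the normal bundle of $\mathsf{\Gamma}^{(2)}$ inside $\mathsf{\Gamma}^{(4)}$ and to check it matches the flopped $\PP^2$-bundle structure on $Z'$ over $N$; an alternative, cleaner route is to instead compute $\omega_{Y_+}$ on the $M_{S,4}$ side directly via adjunction, using that $Y_+$ is a component of $\Fix(\tau_M)$ and hence Lagrangian, so that its canonical bundle is the restriction of the square root of the normal bundle of the fixed locus — but that requires knowing the $\mu_2$-weights of the normal bundle at a general point of $Y_+$, which is itself a nontrivial local computation. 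Either way, the first equality is routine and the entire difficulty is concentrated in correctly transporting classes across the flop.
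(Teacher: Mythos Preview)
Your treatment of the first equality is correct and matches the paper's: the canonical bundle of $\mathsf{\Gamma}^{(4)}$ is computed via the standard formula for symmetric products of a curve, and the identifications $h_{S^{[4]}}\vert_{Y_+'}=\omega^{(4)}_{\mathsf{\Gamma}}/3$ and $\delta_{S^{[4]}}\vert_{Y_+'}=\Diag/2$ go through exactly as you say.

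For the second equality your setup is also correct --- the blow-up formula gives $\omega_{Y_+}=\beta^*\omega_{Y_+'}+E$, and your computation $\psi_*(3h_{S^{[4]}}-\delta_{S^{[4]}})=3\widetilde\lambda-\delta$ is right --- but the key step is missing, and your proposed way to fill it does not work. You write that ``$\delta=\delta_4$ is (a multiple of) that exceptional-divisor class'' and want $E$ to be the restriction of $\delta$. This is false: $\delta$ is the class of $\Delta_4$, the divisor of non--locally-free sheaves (the exceptional divisor of the Donaldson--Uhlenbeck--Yau contraction $\phi$), which has nothing to do with the flop. The Mukai flop $\psi$ is a \emph{small} birational map, so its exceptional locus $Z\subset M_{S,4}$ has codimension~$4$; the divisor $E=Y_+\cap Z$ is therefore not the restriction of any ambient divisor class, and no bookkeeping with $\delta$ will produce it.

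What the paper does instead: since $(3\widetilde\lambda-\delta)\vert_{Y_+}$ and $\beta^*\omega_{Y_+'}$ agree away from $E$, one has $(3\widetilde\lambda-\delta)\vert_{Y_+}=\beta^*\omega_{Y_+'}+aE$ for some integer $a$, and the problem is to show $a=1$. The paper determines $a$ by intersecting with a test curve $R$, a fibre of $E\to\mathsf{\Gamma}^{(2)}$. Such $R$ is a line in a fibre of the flopped $\PP^2$-bundle $Z\to N$, so $q_{M_{S,4}}(\cl(R),4\widetilde\lambda-\delta)=0$ by Proposition~\ref{prop:NefConeHilb4}, and the BBF self-intersection $q(\cl(R),\cl(R))=-13/6$ is read off from~\cite[Table~H4]{HT:IntersectionNumbers}. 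Solving, $\cl(R)=\tfrac12\widetilde\lambda-\tfrac23\delta$, hence $\int_R(3\widetilde\lambda-\delta)=-1$; since $\int_R\beta^*\omega_{Y_+'}=0$ and $\int_RE=-1$, one gets $a=1$. This numerical computation with an extremal curve is the missing idea.
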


\begin{proof}
Let $\omega^{(4)}$ be the line bundle on $\mathsf{\Gamma}^{[4]}$ obtained by symmetrizing  the canonical line bundle on $\mathsf{\Gamma}^4$ (this is the tensor product of the pull-backs of the canonical bundle of $\mathsf{\Gamma}$ via the four projections). Let $\Diag\subset \mathsf{\Gamma}^{[4]}$ be the divisor parametrizing non reduced subschemes (the \lq\lq diagonal\rq\rq), and let $\Diag/2$ be the ramification divisor class associated to the cover $\mathsf{\Gamma}^4\to \mathsf{\Gamma}^{[4]}$.
We have
\[
\omega_{\mathsf{\Gamma}^{[4]}}=\omega^{(4)}_{\mathsf{\Gamma}}(-\Diag/2).
\]
The first equality of the lemma follows, because  $\omega^{(4)}_{\mathsf{\Gamma}}=3 h \vert_{\mathsf{\Gamma}^{(4)}}$ and $\Diag/2=\delta_{S^{[4]}}\vert_{\mathsf{\Gamma}^{(4)}}$.

Next we compute $\omega_{Y_+}$. Let $\alpha\colon Y_{+}\to Y_{+}'=\mathsf{\Gamma}^{(4)}$ be the restriction of $\psi^{-1}$ to $Y_{+}$. Of course $\alpha$ is identified with the blow up of $\mathsf{\Gamma}^{(2)}$ (see Proposition~\ref{prop:ipspiublow}). 
Let  $E\subset Y_+$ be the exceptional divisor of $\alpha$.
Since $\alpha$ defines an isomorphism between $Y_{+}\setminus E$ and $Y_{+}'\setminus \mathsf{\Gamma}^{(2)}$, there exists an integer $a$ such that
\begin{equation}\label{eq:CanonicalBundleBlowUp}
(3 \lambda_S - \delta)\vert_{Y_+} = (\psi^{-1})^{*}(3 h_{S^{[4]}} - \delta_{S^{[4]}})\vert_{Y_+} = \alpha^{*}(\omega_{Y_{+}'})^*+ a E,  
\end{equation}
by using the first equality of the lemma.
In order to determine $a$, let $R$ be a fiber of the map $E\to \mathsf{\Gamma}^{(2)}$ obtained by restricting $\alpha$. Then $R$ is a line in the fibration $\eta\colon Z\to N$ dual to the fibration $\eta'$ in~\eqref{fibratopiani}. Let $\cl(R)\in H^2(M_{S,4};\QQ)$ be the class corresponding to integration along $R$ via the BBF bilinear symmetric form. We claim that
\begin{equation*}
q_{M_{S,4}}\left(\cl(R),4\lambda_S-\delta\right)=0,\qquad 
q_{M_{S,4}}\left(\cl(R),\lambda_S\right)\ge 0,\qquad 
q_{M_{S,4}}\left(\cl(R),\cl(R)\right)=-\frac{13}{6}.
\end{equation*}
In fact, the first equality holds by the last assertion in
Proposition~\ref{prop:NefConeHilb4}, the inequality in the middle holds because $\lambda_S$ is nef, and the last equality holds by~\cite[Table H4]{HT:IntersectionNumbers}.
Since $\cl(R)$ belongs to the span of $\lambda_S$ and $\delta$, the above equations give that 
\[
\cl(R) = \frac 12 \lambda_S - \frac 23 \delta. 
\]
Hence
\[
\int_R(3\lambda_S - \delta) = -1,
\]
and by~\eqref{eq:CanonicalBundleBlowUp} we get that $a=1$. Thus 
\[
\omega_{Y_+} = \alpha^*(\omega_{Y_+'}) + E = (3 \lambda_S - \delta)\vert_{Y_+}.\qedhere
\]
\end{proof}

\subsubsection*{The irreducible components of the fixed locus in $\overline{M}_{S,4}$}
The next step is to understand how each fixed component behaves with respect to the divisorial contraction $\phi\colon M_{S,4}\to \overline{M}_{S,4}$. We recall that $\Delta_4$ is the exceptional locus of $\phi$, see Section~\ref{subsec:DUYK3}.
We let $\ov{Y}_{\pm}:=\phi(Y_{\pm})$ and $\ov{T}:=\phi(T)$, where $T\subset M_{S,4}$ is the strict transfrom of $T'$. 
Clearly we have
\begin{equation*}
\Fix(\tau_{\ov{M}})=\ov{Y}_{+}\cup \ov{Y}_{-}\cup\ov{T}.
\end{equation*}
Note that the irreducible components may very well meet, because $\ov{M}$ is singular.
By Proposition~\ref{prop:ypsilonmeno} and Proposition~\ref{prop:tuttasola} 
we have
\begin{equation}\label{oppenheimer}
\ov{Y}_{-}=\cY_{-}(0)\cong \ov{M}_{\P^2,4}.
\end{equation}

\begin{prop}\label{prp:bobbysolo}
Keeping notation as above, we have  $\ov{Y}_{+}=\cY_{+}(0)_{\rm red}$. Moreover 
$\ov{Y}_{+}\cap\ov{T}=\es$. 
\end{prop}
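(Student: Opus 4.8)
The plan is to analyze the divisorial contraction $\phi\colon M_{S,4}\to\overline{M}_{S,4}$ restricted to the fixed component $Y_+\cong\mathrm{Bl}_{\mathsf{\Gamma}^{(2)}}\mathsf{\Gamma}^{(4)}$ and to show that $\phi|_{Y_+}$ is precisely the morphism contracting the exceptional divisor $E$ down to $\mathsf{\Gamma}^{(2)}$, so that $\overline{Y}_+\cong\mathsf{\Gamma}^{(4)}$. The key point for identifying $\overline{Y}_+$ with $\cY_+(0)_{\mathrm{red}}$ is that, by Lemma~\ref{lem:tuttasola} and Proposition~\ref{prop:tuttasola}, the fiber $\cY_+(0)_{\mathrm{red}}$ is a union of irreducible components of $\Fix(\tau_{\overline{M}})$, and one already knows set-theoretically that $\Fix(\tau_{\overline{M}})=\overline{Y}_+\cup\overline{Y}_-\cup\overline{T}$ with $\overline{Y}_-=\cY_-(0)$ (by~\eqref{oppenheimer}) and $\overline{Y}_-\cap\overline{Y}_+=\es$ (since $\cY_-$ and $\cY_+$ are disjoint by Lemma~\ref{lem:LinearizationFamily}). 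So it remains to (i) locate which of $\overline{Y}_+,\overline{T}$ lie in $\cY_+(0)_{\mathrm{red}}$ versus which are ``extra'', and (ii) show $\overline{Y}_+\cap\overline{T}=\es$, which by Lemma~\ref{lem:tuttasola}-type connectedness reasoning forces $\cY_+(0)_{\mathrm{red}}=\overline{Y}_+$ alone.

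First I would compute the intersection of $Z'$ (the flopping locus) with $T'$ and with $Y_+'$ inside $S^{[4]}$, and track how $\phi$ acts on the strict transforms. For $Y_+'=\mathsf{\Gamma}^{(4)}$: the locus $Z'\cap Y_+'$ parametrizes length-$4$ subschemes of $\mathsf{\Gamma}$ lying on a line, which via residuation is $\mathsf{\Gamma}^{(2)}$, and after the flop $Y_+=\mathrm{Bl}_{\mathsf{\Gamma}^{(2)}}\mathsf{\Gamma}^{(4)}$ as in~\eqref{ipspiublow}. The sheaves parametrized by the exceptional divisor $E\subset Y_+$ are non-locally-free (they come from the compactified Jacobian picture of $\eta\colon Z\to N$), hence $E\subset\Delta_4$, so $\phi$ contracts $E$. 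Using Lemma~\ref{lem:CanonicalDivisors}, $\omega_{Y_+}=(3\widetilde\lambda-\delta)|_{Y_+}=\alpha^*\omega_{Y_+'}+E$; since $\widetilde\lambda|_{Y_+}$ is the pullback of an ample class from $\overline{Y}_+$ and contracts exactly $E$ (being trivial on the $\PP^2$-fibers, i.e.\ on the lines $R$ in $E$), one concludes $\phi|_{Y_+}=\alpha$ and $\overline{Y}_+\cong\mathsf{\Gamma}^{(4)}=\mathsf{\Gamma}^{[4]}$, which is normal. A parallel but quicker analysis shows $\overline{T}=\phi(T)$ is lower-dimensional or at any rate has a description controlled by the $\{x_1,x_2,y,\tau_S(y)\}$ parametrization; the essential fact I would extract is a concrete description of a generic point of $\overline{T}$ (a polystable object of the shape $\cE[1]\oplus k(q)^{\oplus *}$ whose bundle part records the pair $x_1,x_2\in\mathsf{\Gamma}$ together with the free point $y$), which is visibly different from the generic point of $\overline{Y}_+$ (supported entirely on $\mathsf{\Gamma}$).

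For the disjointness $\overline{Y}_+\cap\overline{T}=\es$, the strategy is to compare the underlying objects of $\Db(S)$ (equivalently, the Uhlenbeck data $([\cE],Z)\in S^{(\ell)}$ from the set-theoretic description after Lemma~\ref{lem:DUYdivisor}). A point of $\overline{Y}_+$ corresponds to data entirely supported on $\mathsf{\Gamma}$ (the double dual being the pullback-type bundle built from a length-$4$ subscheme of the sextic, and the Artinian quotient supported on $\mathsf{\Gamma}$), whereas a point of $\overline{T}$ always carries a genuinely free point $y\notin\mathsf{\Gamma}$ — this is forced because $T'$ is the \emph{closure} of configurations with $y\in S\setminus\mathsf{\Gamma}$ and one must check the limiting configurations still retain a support component off $\mathsf{\Gamma}$, or rather that the whole of $\overline{T}$ never collapses into data supported on $\mathsf{\Gamma}$. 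I expect \textbf{this last verification to be the main obstacle}: one must rule out that some degenerate member of $\overline{T}$ (where $y\to\mathsf{\Gamma}$, or $x_1,x_2$ collide with $y$) maps under $\phi$ into the same point of $\overline{M}_{S,4}$ as a point of $Y_+$. I would handle it by a dimension count combined with the stratification of $\Delta_4$ from Lemma~\ref{lem:DUYdivisor}\ref{enum:DUYdivisor3}: track the length $\ell$ of the Artinian quotient along $T$, show $\overline{T}$ meets each stratum $\Delta_{4,\ell}$ in the ``expected'' way, and compare with how $Y_+$ sits relative to the same strata; any would-be intersection point would have to lie in a deep stratum where a dimension inequality fails. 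Alternatively — and perhaps more cleanly — one invokes that $\tau_{\overline{M}}$-fixed locus is smooth away from $\mathrm{Sing}(\overline{M}_{S,4})$ (as recalled in Subsection~\ref{subsec:negzerofiber}), so any intersection of two distinct fixed components must occur inside $\mathrm{Sing}(\overline{M}_{S,4})$; then a local computation at such a point, using the Kuranishi/quiver description of Section~\ref{subsec:DUYK3} (Lemma~\ref{lem:KuranishiK3} and Remark~\ref{rmk:KuranishiDelta1}), shows the local branches through a singular point coming from $Y_+$ and from $T$ are distinct, finishing the proof. Once $\overline{Y}_+\cap\overline{T}=\es$ is known, the Grothendieck connectedness argument (as in the proof of Lemma~\ref{lem:tuttasola}) applied to $\cY_+$ shows its special fiber, being connected and containing $\overline{Y}_+$ as a component disjoint from the other candidate component $\overline{T}$, must equal $\overline{Y}_+$ set-theoretically, i.e.\ $\overline{Y}_+=\cY_+(0)_{\mathrm{red}}$.
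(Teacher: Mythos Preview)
Your approach takes a much more laborious route than the paper and contains a genuine error in the description of $\overline{T}$.

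The paper's argument for $\overline{Y}_+=\cY_+(0)_{\rm red}$ is a one-line dimension count: one shows that a general $[\cF]\in T$ fits into
\[
0\lra \cF \lra f^*\Omega_{\PP^2}(1)\lra k(x_1)\oplus k(x_2)\lra 0,\qquad x_1,x_2\in\mathsf{\Gamma},
\]
so that the double dual of $\cF$ is the \emph{rigid} bundle $f^*\Omega_{\PP^2}(1)$ and the Uhlenbeck datum $\phi([\cF])$ is $([f^*\Omega_{\PP^2}(1)],x_1+x_2)$. In particular $\overline{T}$ is birational to $\mathsf{\Gamma}^{(2)}$ and has dimension $2$. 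Since $\pi_+\colon\cY_+\to\mathbb{D}$ is flat with $4$-dimensional fibers, $\overline{T}$ cannot be an irreducible component of $\cY_+(0)$; together with $\overline{Y}_-=\cY_-(0)$ this forces $\cY_+(0)_{\rm red}=\overline{Y}_+$ immediately, with no need for Grothendieck connectedness.

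Your main error is the description of a generic point of $\overline{T}$: you assert that ``the bundle part records the pair $x_1,x_2\in\mathsf{\Gamma}$ together with the free point $y$'' and that ``a point of $\overline{T}$ always carries a genuinely free point $y\notin\mathsf{\Gamma}$''. This is false. A short Cayley--Bacharach check shows that for $W=\{x_1,x_2,y,\tau_S(y)\}$ the sheaf $\cF$ fails to be locally free precisely at $x_1$ and $x_2$ (the residual triple $\{x_2,y,\tau_S(y)\}$ always lies on the curve $f^{-1}(\overline{x_2\,f(y)})\in|h|$), so $\cF^{\vee\vee}/\cF$ has length $2$ supported on $\mathsf{\Gamma}$ and $\cF^{\vee\vee}$ is the unique bundle in its moduli space. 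The point $y$ is completely lost under $\phi$, and the Uhlenbeck support of $\overline{T}$ lies \emph{entirely on} $\mathsf{\Gamma}$. Hence your proposed support-separation argument (``$\overline{Y}_+$ on $\mathsf{\Gamma}$, $\overline{T}$ off $\mathsf{\Gamma}$'') collapses. You do notice that $\overline{T}$ is lower-dimensional, but you never use this; had you, the first assertion would follow at once as above, and the disjointness would follow from the stratification observation that $\overline{T}$ lies in the deepest stratum $\phi(\Delta_{4,2})$ while the singular locus of $\overline{Y}_+$ sits only in $\phi(\Delta_{4,1})$.
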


\begin{proof}
First we show that  $\ov{Y}_{+}\subset\cY_{+}(0)$.
In fact if $[\cF]\in Y_{+}$ is a general point then $\cF$ is locally free by Proposition \ref{prop:Y+Explicit}. Since the Donaldson--Uhlenbeck--Yau contraction map $\varphi\colon M_{S,n}\rightarrow \overline{M}_{S,n}$ is an isomorphism on the open subset parametrizing locally free sheaves, $[\cF]$ belongs to the closure of ${\rm Fix}_{\cM^*}(\tau_{\cM^*})$ (using the notation of Section \ref{subsec:negzerofiber}). Hence  $\ov{Y}_{+}$ is contained in one of $\cY_{+}(0),\cY_{-}(0)$. By~\eqref{oppenheimer}  we get that $\ov{Y}_{+}\subset\cY_{+}(0)$. Let $[\cF]\in T$: by Corollary~\ref{coro:eccoti} 
we have $l(\cF^{\vee\vee}/\cF=2$. Since for $[\cF]\in Y_{+}$ we have $l(\cF^{\vee}/\cF)\le 1$ (see Proposition~\ref{prop:Y+Explicit}), it follows that 
$\ov{Y}_{+}\cap\ov{T}=\es$. 
Proposition~\ref{coro:eccoti} also gives that $\ov{T}$ is birational to $\mathsf{\Gamma}^{(2)}$, and hence it has dimension $2$. 
Since all irreducible components of $\cY_{+}(0)$ have dimension $4$, it follows that $\ov{Y}_{+}=\cY_{+}(0)_{\rm red}$. 
\end{proof}

Next we  describe locally the scheme $\ov{Y}_+$ at  points $[F]\in\phi(Y_+\cap \Delta_{4,1})$, i.e., such that $F:=\cE[1]\oplus k(q)$. 
By Remark~\ref{rmk:KuranishiDelta1}, locally (in the analytic topology) it is given by a product
\[
\Ext^1_S(\cE,\cE)\times Q \times \Ext^1_S(k(q),k(q)),
\]
where $Q=\Spec(\CC[u_1,u_2,u_3]/(u_1^2-u_2u_3))$ is the quadric cone.
The action of the involution $\overline{\tau}$ is then given as follows.
As $\mu_2$-representations, we have
\begin{align*}
  &\Ext^1_S(\cE,\cE) = \C_{+}^{\oplus 2}\oplus \C_{-}^{\oplus 2}\\
  &\Ext^1_S(k(q),k(q)) = \C_{+}\oplus \C_{-}.  
\end{align*}
For the cone, the action on the variables $u_1,\dots,u_3$ is given by
\begin{equation}\label{azionecono}
u_1\mapsto -u_1,\quad u_2\mapsto u_2,\quad u_3\mapsto u_3.
\end{equation}
To see why we recall that by Remark~\ref{rmk:KuranishiDelta1} we have 
\begin{equation}\label{formulaperu}
u_1=a_1 b_1,\quad u_2=-a_1 b_2,\quad u_3=a_2 b_1,
\end{equation}
where $\{a_1,a_2\}$, $\{b_1,b_2\}$ are dual bases of $\Ext^1(\cE[1],k(p))$ and $\Ext^1(k(p),\cE[1])$ respectively. We claim that the bases $\{a_1,a_2\}$, $\{b_1,b_2\}$  can be chosen so that 
\begin{equation}\label{azioneprecono}
\tau_S^{*}(a_1)=-a_1,\quad \tau_S^{*}(a_2)=a_2,\quad 
\tau_S^{*}(b_1)=b_1,\quad \tau_S^{*}(b_2)=-b_2.
\end{equation}
In fact since $\cE$ is not the pull-back of a vector bundle on $\PP^2$ (note for instance that $c_2(\cE)$ is odd), both the $+1$ and the $-1$ eigenspace for the action of $\tau_S^{*}$ on the fiber $\cE(p)$ has dimensions $1$.
This proves that we can choose a basis $\{a_1,a_2\}$ such that the first two equalities in~\eqref{azioneprecono} hold. Since Serre duality between $\Ext^1(\cE[1],k(p))$ and $\Ext^1(k(p),\cE[1])$ is given by the Yoneda product followed by the trace map, and $\tau_S$ is antisymplectic, it follows that  $\tau_S^{*}$ acts as claimed on the dual basis $\{b_1,b_2\}$.
The action on $u_1,u_2,u_3$ is as in~\eqref{azionecono} by the equalities in~\eqref{formulaperu} and in~\eqref{azioneprecono}.

It follows that the fixed locus is given \emph{as a scheme} by the equation $u_1=0$, namely it is $u_2u_3=0$. 
We have thus proved the following result.

\begin{lemm}\label{lem:AnalyticFixedLocusDim4}
We keep the above notation.
We have an isomorphism of analytic germs
\[
\left(\overline{Y}_+,[\cE[1]\oplus k(q)] \right)\cong \left( \mathbb{A}^3_{\CC} \times B, 0 \right),
\]
where $B=\Spec(\CC[u_2,u_3]/(u_2u_3))$.
\end{lemm}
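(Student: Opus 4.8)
The plan is to read off the local structure of $\overline{Y}_+$ near $[F]:=[\cE[1]\oplus k(q)]$ from the analytic-local model of $\overline{M}_{S,4}$ furnished by Remark~\ref{rmk:KuranishiDelta1}, together with the induced action of $\tau_{\overline{M}}$, and then to observe that $\overline{Y}_+$ is the only component of $\Fix(\tau_{\overline{M}})$ through $[F]$. First one records the equivariance: since $[F]$ is $\tau_{\overline{M}}$-fixed and $\cE[1]$, $k(q)$ are simple and non-isomorphic, we get $\tau_S^{*}\cE\cong\cE$ and $\tau_S(q)=q$, so $q$ lies on the ramification curve $\mathsf{\Gamma}\subset S$ and we may choose analytic coordinates $(x,y)$ at $q$ with $\tau_S\colon(x,y)\mapsto(x,-y)$. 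Because the Kuranishi map in Remark~\ref{rmk:KuranishiDelta1} is the Yoneda product, hence functorial, the covering involution $\tau_S$ acts on the local model $\Ext^1_S(\cE,\cE)\times Q\times\Ext^1_S(k(q),k(q))$ compatibly with $G_{S,F}$; tracking $\tau_S$ through the invariant-ring description $\CC[a_1,a_2,b_1,b_2]^{\CC^{*}}=\CC[u_1,u_2,u_3]$ of that remark yields $\Ext^1_S(\cE,\cE)=\CC_+^{\oplus2}\oplus\CC_-^{\oplus2}$, $\Ext^1_S(k(q),k(q))=\CC_+\oplus\CC_-$, and, on the quadric-cone factor $Q=\Spec\big(\CC[u_1,u_2,u_3]/(u_1^2-u_2u_3)\big)$, the action $u_1\mapsto-u_1$, $u_2\mapsto u_2$, $u_3\mapsto u_3$.

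Next one computes the scheme-theoretic fixed locus in this model. For a linear $\mu_2$-action on a vector space $V=V_+\oplus V_-$ the fixed subscheme is $V_+$, and the formation of fixed subschemes commutes with closed immersions; applied to $Q\subset\mathbb{A}^3_{u_1,u_2,u_3}$ with $u_1\mapsto-u_1$ this gives fixed subscheme $Q\cap\{u_1=0\}=\Spec\big(\CC[u_1,u_2,u_3]/(u_1^2-u_2u_3,\,u_1)\big)=\Spec\big(\CC[u_2,u_3]/(u_2u_3)\big)=B$. Taking the product over the three factors of Remark~\ref{rmk:KuranishiDelta1}, the fixed subscheme of $\tau_{\overline{M}}$ in an analytic neighbourhood of $[F]$ is $\mathbb{A}^2_\CC\times B\times\mathbb{A}^1_\CC\cong\mathbb{A}^3_\CC\times B$, and this is reduced because $(u_2u_3)$ is a radical ideal of $\CC[u_2,u_3]$.

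Finally one identifies this germ with $\overline{Y}_+$. Its support lies in $\Fix(\tau_{\overline{M}})_{\mathrm{red}}=\overline{Y}_+\cup\overline{Y}_-\cup\overline{T}$, and $[F]\in\overline{Y}_+$ since $[F]\in\phi(Y_+\cap\Delta_{4,1})$; on the other hand $\overline{Y}_+\cap\overline{Y}_-=\emptyset$, because $\cY_+\cap\cY_-=\emptyset$ forces $\cY_+(0)\cap\cY_-(0)=\emptyset$, and $\overline{Y}_+\cap\overline{T}=\emptyset$ by Proposition~\ref{prp:bobbysolo}. Hence near $[F]$ the fixed locus meets only $\overline{Y}_+$, so $\Fix(\tau_{\overline{M}})_{\mathrm{red}}=\overline{Y}_+$ there; since the fixed subscheme is already reduced near $[F]$, it equals $\overline{Y}_+$, giving the asserted isomorphism of analytic germs $\big(\overline{Y}_+,[\cE[1]\oplus k(q)]\big)\cong(\mathbb{A}^3_\CC\times B,0)$. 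I expect the only delicate point to be the sign bookkeeping on the quadric-cone factor in the first paragraph, i.e.\ determining precisely which of the invariant coordinates $u_1,u_2,u_3$ change sign under $\tau_S$; once the $\mu_2$-action is known, the remaining steps are formal.
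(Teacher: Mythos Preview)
Your proof is correct and follows essentially the same approach as the paper: use the local model from Remark~\ref{rmk:KuranishiDelta1}, identify the $\mu_2$-action on each factor (with the same decompositions and the same action $u_1\mapsto-u_1$, $u_2\mapsto u_2$, $u_3\mapsto u_3$ on the cone), and read off the fixed subscheme. The paper's argument is the paragraph immediately preceding the lemma, which ends with ``We proved the following result.'' You are more explicit than the paper in justifying why the local fixed subscheme coincides with $\overline{Y}_+$ rather than also meeting $\overline{Y}_-$ or $\overline{T}$, invoking Proposition~\ref{prp:bobbysolo} and the disjointness of $\cY_+$ and $\cY_-$; the paper leaves this identification implicit.
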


In particular, by Lemma~\ref{lem:AnalyticFixedLocusDim4}, we have that $\overline{Y}_+$ is a local complete intersection.
The key result is then the following.

\begin{prop}\label{prop:CanonicalBundleFixedLocus}
Let $\widetilde{L}$ be the line bundle on $M_{S,4}$ with associated divisorial contraction $\phi\colon M_{S,4}\to \ov{M}_{S,4}$, see Lemma~\ref{lem:DUYdivisor}. 
We have
\[
(\phi\vert_{Y_+})^*\omega_{\overline{Y}_+} \cong  \widetilde{L}^{\otimes 3}\vert_{Y_+}.
\]
In particular, $\omega_{\overline{Y}_+}$ is ample.
\end{prop}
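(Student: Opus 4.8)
The plan is to recognize $\phi|_{Y_+}\colon Y_+\to\ov{Y}_+$ as the normalization of $\ov{Y}_+$ and then extract $\omega_{\ov{Y}_+}$ from the conductor formula, feeding in the local structure of $\ov{Y}_+$ from Lemma~\ref{lem:AnalyticFixedLocusDim4} and the computation of $\omega_{Y_+}$ from Lemma~\ref{lem:CanonicalDivisors}. Note first that, by Lemma~\ref{lem:AnalyticFixedLocusDim4}, $\ov{Y}_+$ is a local complete intersection, hence Gorenstein, so $\omega_{\ov{Y}_+}$ is an invertible sheaf; moreover $\Sing(\ov{Y}_+)$ has codimension one and, along it, $\ov{Y}_+$ has ordinary double point singularities (\'etale-locally $\mathbb{A}^3_{\CC}\times B$ with $B$ a node).

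First I would check that $\phi|_{Y_+}$ is the normalization morphism. It is birational, since $\phi$ is an isomorphism away from $\Delta_4$ and $Y_+\not\subset\Delta_4$ (a general point of $Y_+$ corresponds to a locally free sheaf, cf.~Proposition~\ref{prp:bobbysolo}); and it is finite: it is proper, and it is quasi-finite because, for $n=4$, $Y_+$ is disjoint from the deeper stratum $\Delta_{4,2}$ while over a point of $\phi(\Delta_{4,1})$ the fibre of $\phi$ is a $\PP^1$ meeting $Y_+$ in finitely many points, by the local model of Remark~\ref{rmk:KuranishiDelta1}. A finite birational morphism from a smooth (hence normal) variety onto a reduced variety is the normalization; so $Y_+$ is the normalization of $\ov{Y}_+$, and $Y_+\cap\Delta_4$ maps onto the double-point locus $\Sing(\ov{Y}_+)$.

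Next, in the notation of the proof of Lemma~\ref{lem:CanonicalDivisors}, let $\alpha\colon Y_+\to\mathsf{\Gamma}^{(4)}$ be the blow-up of $\mathsf{\Gamma}^{(2)}$ (see~\eqref{ipspiublow}) and $E\subset Y_+$ its exceptional divisor. The local model of Lemma~\ref{lem:AnalyticFixedLocusDim4} shows that $E=Y_+\cap\Delta_4$ and that this intersection is reduced; since $\ov{Y}_+$ has only ordinary double points in codimension one, the reduced preimage of $\Sing(\ov{Y}_+)=\phi(E)$ is exactly $E$, and the conductor ideal of the normalization $\phi|_{Y_+}$ equals $\cO_{Y_+}(-E)$. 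Grothendieck duality for the finite morphism $\phi|_{Y_+}$ (with $\ov{Y}_+$ Gorenstein and $Y_+$ smooth) then yields $(\phi|_{Y_+})^*\omega_{\ov{Y}_+}\cong\omega_{Y_+}(E)$. Now Lemma~\ref{lem:CanonicalDivisors} gives $\omega_{Y_+}\cong(3\wt{\lambda}-\delta)|_{Y_+}$, the reducedness of $Y_+\cap\Delta_4$ gives $\delta|_{Y_+}=\cO_{Y_+}(E)$, and $\wt{L}^{\otimes 3}|_{Y_+}\cong\cO_{Y_+}(3\wt{\lambda}|_{Y_+})$; hence the $\delta$-terms cancel and $(\phi|_{Y_+})^*\omega_{\ov{Y}_+}\cong\wt{L}^{\otimes 3}|_{Y_+}$, as asserted. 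For ampleness: since $\wt{\lambda}=\phi^*\ov{\lambda}$ with $\ov{\lambda}$ ample (Remark~\ref{rmk:AmpleLineBundleDUYK3}), $(\phi|_{Y_+})^*\omega_{\ov{Y}_+}\cong(\phi|_{Y_+})^*(\ov{L}^{\otimes 3}|_{\ov{Y}_+})$ is the pullback of the ample line bundle $\ov{L}^{\otimes 3}|_{\ov{Y}_+}$ along the finite morphism $\phi|_{Y_+}$, hence ample; and ampleness descends along finite surjective morphisms, so $\omega_{\ov{Y}_+}$ is ample.

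The main obstacle is the conductor computation: everything hinges on $\ov{Y}_+$ having \emph{only} ordinary double points in codimension one, which is precisely what makes the discrepancy of $E$ under $\phi|_{Y_+}$ equal to $1$ and thus produces the cancellation with $\delta|_{Y_+}$; this in turn rests entirely on the explicit Kuranishi description in Lemma~\ref{lem:AnalyticFixedLocusDim4} (and, for the finiteness of $\phi|_{Y_+}$, on $Y_+$ being disjoint from the stratum $\Delta_{4,2}$). An alternative to the conductor formula, which also requires the same local model, is to use adjunction for the regular embeddings $Y_+\subset M_{S,4}$ and $\ov{Y}_+\subset\ov{M}_{S,4}$ (both ambient spaces having trivial canonical bundle) and to compute that $\det(d\phi)\colon\det N_{Y_+/M_{S,4}}\to(\phi|_{Y_+})^*\det N_{\ov{Y}_+/\ov{M}_{S,4}}$ vanishes to order exactly one along $E$.
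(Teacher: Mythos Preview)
Your proof is correct and follows essentially the same approach as the paper: both derive $(\phi|_{Y_+})^*\omega_{\overline{Y}_+}\cong\omega_{Y_+}(D)$ with $D=Y_+\cap\Delta_4$ from the nodal local model of Lemma~\ref{lem:AnalyticFixedLocusDim4} (your conductor formula is exactly the paper's isomorphism $\omega_{Y_+}\cong(\phi|_{Y_+})^*\omega_{\overline{Y}_+}\otimes\cO_{Y_+}(-D)$ rearranged), then combine with $\omega_{Y_+}=(3\widetilde{\lambda}-\delta)|_{Y_+}$ from Lemma~\ref{lem:CanonicalDivisors} and $\delta|_{Y_+}=[D]$. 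One cosmetic point: your identification of $D$ with the blow-up exceptional divisor $E$ from the proof of Lemma~\ref{lem:CanonicalDivisors} is neither needed nor what Lemma~\ref{lem:AnalyticFixedLocusDim4} establishes---the paper simply works with $D:=Y_+\cap\Delta_4$ throughout, and so can you.
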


\begin{proof}
Let us denote by $D$ the intersection $D:=Y_+\cap \Delta_4$.
By Lemma~\ref{lem:AnalyticFixedLocusDim4}, the map $\phi\vert_{Y_+}\colon Y_+\to \overline{Y}_+$ 
defines an isomorphism between $Y_+\setminus D$ and $\phi(Y_+\setminus D)$, and the restriction of $\phi$ to $D$  
is an \'etale 2-1 morphism onto its image.
Hence, we get an isomorphism of line bundles
\[
\omega_{Y_+} \cong (\phi\vert_{Y_+})^*\omega_{\overline{Y}_+} \otimes \cO_{Y_+}(-D).
\]
Since $\cO_{Y_+}(D) \cong \delta\vert_{Y_+}$, the proposition follows from Lemma~\ref{lem:CanonicalDivisors}.
\end{proof}

\subsection{Proof of Theorem \ref{thm:main2}}\label{subsec:ProofMainThm2}

The proof goes along the same lines as that of Theorem~\ref{thm:main1}.

\begin{prop}\label{prop:fixreduced}
We have the equality of schemes $\cY_{+}(0)=\overline{Y}_+$.
\end{prop}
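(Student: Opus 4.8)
The plan is to follow the same template as for Theorem~\ref{thm:main1}: since $\cY_{+}(0)_{\mathrm{red}}=\overline{Y}_{+}$ by Proposition~\ref{prp:bobbysolo}, it suffices to prove that the scheme-theoretic fiber $\cY_{+}(0)$ is \emph{reduced}. The new difficulty is that here $\overline{Y}_{+}$ is not normal -- by Lemma~\ref{lem:AnalyticFixedLocusDim4} it is only a local complete intersection, singular in codimension~$1$ -- so Lemma~\ref{lem:NormalityImpliesReduced} does not apply directly. Instead I would prove the stronger statement that the total space $\cY_{+}$ is \emph{smooth}, after which reducedness of $\cY_{+}(0)$ can be read off from the explicit local model (or, equivalently, deduced from flatness of $\pi_{+}$ over the regular curve $\mathbb{D}$: a smooth source over a regular base gives Cohen--Macaulay, hence $S_{1}$, fibers, and $S_{1}$ plus generic reducedness gives reducedness).

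The key step is the local analysis of $\cM\to\mathbb{D}$ along $\overline{Y}_{+}$. Over $\mathbb{D}\setminus\{0\}$ the fibers are HK manifolds, so $\cM$ is smooth there; over $0$ the morphisms $\cM\to\mathbb{D}$ and $\Fix(\tau_{\cM})\to\mathbb{D}$ are smooth away from $\Sing(\overline{M}_{S,4})$, and by the discussion preceding Lemma~\ref{lem:AnalyticFixedLocusDim4} the points of $\overline{Y}_{+}$ lying on $\Sing(\overline{M}_{S,4})$ are exactly the points $[F]=[\cE[1]\oplus k(q)]$ with $q\in\mathsf{\Gamma}$, so $\overline{Y}_{+}$ meets only the stratum $\phi(\Delta_{4,1})$. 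At such a point, Remark~\ref{rmk:KuranishiDelta1} identifies $\overline{M}_{S,4}$ analytically with $\Ext^{1}_{S}(\cE,\cE)\times Q\times\Ext^{1}_{S}(k(q),k(q))$, where $Q=\Spec(\CC[u_{1},u_{2},u_{3}]/(u_{1}^{2}-u_{2}u_{3}))$ is the surface ordinary double point and $\overline{\tau}$ acts by $u_{1}\mapsto -u_{1}$ on $Q$ (and as recorded in Proposition~\ref{prp:bobbysolo} on the two $\Ext^{1}$-factors). The versal smoothing $u_{1}^{2}-u_{2}u_{3}=t$ of $Q$ is $\overline{\tau}$-equivariant, with $\overline{\tau}$ acting trivially on $t$; since $\cM_{t}$ is a smooth HK manifold for $t\ne 0$, the family $\cM$ realizes this smoothing, i.e.\ near $[F]$ it is analytically isomorphic to
\[
\Ext^{1}_{S}(\cE,\cE)\times\{u_{1}^{2}-u_{2}u_{3}=t\}\times\Ext^{1}_{S}(k(q),k(q)),
\]
which is smooth, as the hypersurface $\{u_{1}^{2}-u_{2}u_{3}=t\}\subset\mathbb{A}^{4}$ is smooth.

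Taking fixed loci, $\overline{\tau}$ fixes the $\mu_{2}$-invariant parts $\CC^{2}\subset\Ext^{1}_{S}(\cE,\cE)$ and $\CC\subset\Ext^{1}_{S}(k(q),k(q))$, together with the subscheme $\{u_{1}=0\}$ of $\{u_{1}^{2}-u_{2}u_{3}=t\}$, i.e.\ the smooth surface $\{t=-u_{2}u_{3}\}\cong\mathbb{A}^{2}_{u_{2},u_{3}}$. As the fixed locus of an involution on a smooth space, $\Fix(\tau_{\cM})$ is smooth near $[F]$; it is irreducible there of dimension~$5$, and since $\cY_{+}$ is integral of dimension~$5$ with $\cY_{+}\subseteq\Fix(\tau_{\cM})$ and $[F]\in\cY_{+}$, the closed immersion $\cY_{+}\hookrightarrow\Fix(\tau_{\cM})$ is an isomorphism near $[F]$. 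Thus $\cY_{+}$ is smooth near $[F]$, hence smooth everywhere (being trivially smooth elsewhere), and under the above identification $\pi_{+}$ becomes $(\,\cdot\,,u_{2},u_{3})\mapsto -u_{2}u_{3}\in\mathbb{D}$. Therefore $\cY_{+}(0)$ is, locally near $\phi(Y_{+}\cap\Delta_{4,1})$, the product $\{u_{2}u_{3}=0\}\times\mathbb{A}^{3}$, which is reduced, while away from $\phi(Y_{+}\cap\Delta_{4,1})$ it is smooth, hence reduced there as well. Consequently $\cY_{+}(0)=\cY_{+}(0)_{\mathrm{red}}=\overline{Y}_{+}$ as schemes.

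The crux -- and the main obstacle -- is the deformation-theoretic input that a general $\mu_{2}$-equivariant smoothing of $\overline{M}_{S,4}$ has \emph{smooth} total space along $\overline{Y}_{+}$. This rests on the two facts that $\overline{Y}_{+}$ meets $\Sing(\overline{M}_{S,4})$ only in the stratum $\phi(\Delta_{4,1})$ -- so that the transverse singularity is a single surface node -- and that the versal smoothing of that node carries the involution (acting trivially on the deformation parameter). Once these are in place, everything else is bookkeeping with the explicit local models of Remark~\ref{rmk:KuranishiDelta1} and Proposition~\ref{prp:bobbysolo}.
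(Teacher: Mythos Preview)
Your strategy is sound, but it takes a harder route than necessary, and the ``main obstacle'' you identify is precisely what the paper's argument sidesteps.

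The paper's proof is almost a one-liner once the right viewpoint is in place. Endow $\Fix_{\cM/\mathbb{D}}(\tau_{\cM})$ with its natural scheme structure as the fibered product $\cM\times_{(\mathrm{id},\tau_{\cM}),\,\cM\times_{\mathbb{D}}\cM,\,\Delta}\cM$. This scheme structure commutes with base change, so
\[
\Fix_{\cM/\mathbb{D}}(\tau_{\cM})\times_{\mathbb{D}}\Spec k(0)=\Fix_{\cM_0}(\tau_{\overline{M}}).
\]
Since $\cY_{+}$ is reduced and set-theoretically contained in $\Fix_{\cM/\mathbb{D}}(\tau_{\cM})$, it is a closed \emph{subscheme} of it; hence $\cY_{+}(0)$ is a closed subscheme of $\Fix_{\cM_0}(\tau_{\overline{M}})$. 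Now the discussion preceding Lemma~\ref{lem:AnalyticFixedLocusDim4} already computed this scheme-theoretic fixed locus near the singular points of $\overline{Y}_{+}$: it is cut out by $u_{1}=0$, i.e.\ it is $\mathbb{A}^{3}\times\{u_{2}u_{3}=0\}$, which \emph{is} $\overline{Y}_{+}$ with its reduced structure. Away from those points the fixed locus is smooth. Since by Proposition~\ref{prp:bobbysolo} and~\eqref{oppenheimer} the scheme $\overline{Y}_{+}$ is disjoint from $\overline{Y}_{-}$ and $\overline{T}$, we get $\cY_{+}(0)\subseteq\overline{Y}_{+}$ as schemes; combined with $\overline{Y}_{+}=\cY_{+}(0)_{\mathrm{red}}\subseteq\cY_{+}(0)$ this gives equality.

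In other words, you do not need to know anything about the total space $\cM$ beyond its central fiber: the bound $\cY_{+}(0)\subseteq\Fix_{\cM_0}(\tau_{\overline{M}})$ is automatic from base change, and the right-hand side was already computed to be reduced (equal to $\overline{Y}_{+}$) in Lemma~\ref{lem:AnalyticFixedLocusDim4}. Your approach instead tries to prove the stronger fact that $\cY_{+}$ is \emph{smooth}, which would require controlling how the global equivariant deformation hits the local versal deformation of the transverse $A_{1}$ --- exactly the point you flag as the crux. That input is plausible for a general curve $\mathbb{D}$, but it is extra work, and the paper never needs it: in Section~\ref{subsec:ProofMainThm2} only Gorensteinness of $\cY_{+}$ is used, and that follows from $\cY_{+}(0)=\overline{Y}_{+}$ being a local complete intersection.
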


\begin{proof}
The fixed locus $\Fix_{\mathcal{M}/\mathbb{D}}(\tau_{\cM})$ has a natural scheme structure, defined as  the fibered product of $\mathbb{D}$-schemes
\[
\xymatrix{
\Fix_{\mathcal{M}/\mathbb{D}}(\tau_{\mathcal{M}})\ar[d] \ar[rr] &&\mathcal{M} \ar[d]^{\Delta_{\mathcal{X}/B}}\\
\mathcal{M} \ar[rr]^{(\mathrm{id},\tau_{\mathcal{M}})} && \mathcal{M}\times_{\mathbb{D}} \mathcal{M}.
}
\]
With this definition the fixed locus with its scheme structure behaves well with respect to base change; in particular
\[
\Fix_{\mathcal{M}_0}(\tau_{\ov{M}}) = \Fix_{\mathcal{M}/\mathbb{D}}(\tau_{\mathcal{M}}) 
\times_{\mathbb{D}} \Spec(k(0)).
\]
The proposition follows because by~\eqref{oppenheimer} and Proposition~\ref{prp:bobbysolo}, the component  $\overline{Y}_+$ does not meet any other component of the fixed locus of $\tau_{\ov{M}}$.
\end{proof}

By~\citestacks{0C05} and~\citestacks{0C11} it follows that $\cY_+$ is Gorenstein, as in the proof of Proposition~\ref{prop:KM} (and thus normal, since its singular locus coincides with the singular locus of $\overline{Y}_+$).
The theorem now follows from Proposition~\ref{prop:CanonicalBundleFixedLocus} and an argument analogous to that given in the proof of the equality in~\eqref{canmenotre}, see Section~\ref{sec:MainResult}.

%%%%%%%%%%%%%%%%%%%%%

\section{The fixed locus if the divisibility is~1}\label{sec:poscomp}

Let $(X,\lambda)$ be a polarized HK manifold of $\mathrm{K3}^{[n]}$-type with $q_X(\lambda)=2$. 
Then there exists a unique involution $\tau_{\lambda}$ of $X$ such that~\eqref{riflessione} holds (regardless of the divisibility of $\lambda$). If $\divisore(\lambda)=1$ then the fixed locus $\Fix(\tau_{\lambda})=\Fix(\tau_{\lambda})_+$ is irreducible, see~\cite[Main Theorem]{involutions1}.
In the present section we motivate the following conjecture.

\begin{conj}\label{conj:GeneralTypeDiv1}
Let $(X,\lambda)$ be a polarized {\rm HK} manifold of $\mathrm{K3}^{[n]}$-type with $q_X(\lambda)=2$
and $\divisore(\lambda)=1$. 
Let $F:=\Fix(\tau_{\lambda})$ and let $L$ be the ample line bundle such that $c_1(L)=\lambda$.
Then in $\Pic(F)$ we have the equality 
\begin{equation}\label{canonicofisso}
\omega_F^{\otimes n}= L|_{F}^{\otimes \frac{(n+2)(n+1)}{2}}.
\end{equation}
\end{conj}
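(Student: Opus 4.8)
The exponent $\frac{(n+1)(n+2)}{2}$ is exactly $\chi(X,L)=h^0(X,L)$ (Kodaira vanishing and Riemann--Roch on $X$, using $q_X(\lambda)=2$), and this is the numerical shadow of the geometry one expects: setting $N:=h^0(X,L)-1=\frac{(n+1)(n+2)}{2}-1$, the map $\varphi:=\varphi_{|\lambda|}\colon X\dashrightarrow\PP^N$ should be a morphism, finite of degree~$2$ onto its image $Y:=\varphi(X)$, with deck transformation $\tau_\lambda$; recall that the chosen lift makes $\tau_\lambda$ act trivially on $H^0(X,L)$, so $\varphi$ is $\tau_\lambda$-invariant and factors through $Y\cong X/\tau_\lambda$, and $F=\Fix(\tau_\lambda)$ maps to the branch locus $B:=\varphi(F)$. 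For $n=1$ this is the double plane ($Y=\PP^2$, $B$ a smooth plane sextic) and for $n=2$ the double EPW sextic; in general, ``$|\lambda|$ behaves well in a neighbourhood of the fixed locus'' should mean precisely that $|\lambda|$ is base-point free along $F$, that $\varphi$ is finite of degree~$2$ near $F$, and that $\varphi|_F\colon F\to B$ is an isomorphism. I would begin by recording the local picture: since $F$ is a smooth Lagrangian and $\tau_\lambda$ is antisymplectic, near $F$ the involution is analytically conjugate to $(x,y)\mapsto(x,-y)$ on $\CC^n_x\times\CC^n_y$ with $F=\{y=0\}$ (no $+1$ normal directions), so near $B$ the quotient $Y$ is analytically $\CC^n\times\cC_n$, where $\cC_n=\Spec\,\CC[y_1,\dots,y_n]^{\mu_2}$ is the affine cone over the $2$-Veronese of $\PP^{n-1}$; in particular $B=\Sing(Y)$ for $n\ge 2$.

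Granting this, the plan is to reduce~\eqref{canonicofisso} to the identity $\omega_B^{\otimes n}=\cO_{\PP^N}(1)|_B^{\otimes\frac{(n+1)(n+2)}{2}}$ on $B\subset\PP^N$ (pull back along $\varphi|_F$, which sends $\omega_F$ to $\omega_B$ and $\lambda|_F$ to $H|_B$, $H:=\cO_{\PP^N}(1)|_Y$), and to establish the latter by passing to the blow-up. On $\widetilde X:=\Bl_F X$ the lift of $\tau_\lambda$ fixes the exceptional divisor $\cE=\PP(N_{F/X})\cong\PP(\Omega^1_F)$ pointwise (it acts by $-1$ on $N_{F/X}$), so $\rho\colon\widetilde X\to\widetilde Y:=\widetilde X/\tau_\lambda$ is a genuine double cover branched along the \emph{divisor} $\cE$, and $\widetilde Y=\Bl_B Y$ is a resolution with exceptional divisor $\cE_Y\cong\PP(\Omega^1_F)\xrightarrow{\,p\,}F$. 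From $K_{\widetilde X}=(n-1)\cE$ (blow-up of the smooth codimension-$n$ centre $F$ in the HK manifold $X$, $\omega_X\cong\cO_X$) and the ramification formula $K_{\widetilde X}=\rho^*K_{\widetilde Y}+\cE$ one gets $\rho^*K_{\widetilde Y}=(n-2)\cE$; restricting to $\cE$, using $\cO_{\widetilde X}(\cE)|_{\cE}=\cO_{\cE/F}(-1)$, the relative Euler sequence of $\PP(\Omega^1_F)\to F$, the identification $\cE\cong\cE_Y$ under $\rho$, and the pull-back identity $\rho^*H|_{\cE_Y}=p^*(\lambda|_F)$, one obtains a system of linear relations among $\omega_F$, $\lambda|_F$ and the tautological class. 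Pinning the coefficient to $\frac{(n+1)(n+2)}{2n}$ should then follow either from the explicit discrepancy data of the Veronese-cone resolution $\cC_n$, or from comparing with the global invariant $\deg Y=\tfrac12\int_X c_1(\lambda)^{2n}=\tfrac{(2n)!}{2\,n!}$ (equivalently the Hilbert polynomial of $Y$, read off from $\chi(X,L^{\otimes k})$) together with the fixed-point class $[F]\in H^{2n}(X,\QQ)$; this last part I expect to be routine once the local model is in hand. Finally, upgrading the resulting rational identity in $\NS(F)_\QQ$ to the integral equality in $\Pic(F)$ is automatic, precisely because both line bundles are pulled back from $B\subset\PP^N$ via $\varphi|_F$ — which is exactly why one needs the good behaviour of $|\lambda|$ even for the integral statement, rather than only its deformation-invariant numerical version, which instead propagates from the cases $n=1,2$ over the (irreducible) moduli of such $(X,\lambda)$.

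An alternative would be to imitate Sections~\ref{sec:SpecialFiber}--\ref{sec:MainResult}: degenerate $(X,\lambda)$, inside the moduli of polarized HK manifolds of $\mathrm{K3}^{[n]}$-type with $q_X(\lambda)=2$ and $\divisore(\lambda)=1$, to a singular HK variety obtained as a divisorial contraction of a moduli space of sheaves on a degree-$2$ K3 surface $S$ carrying a suitable involution, and compute $\omega$ and the polarization on the relevant fixed component directly there. The main obstacle, in either approach, is to remove all conditionality: for the first route, proving the good behaviour of $|\lambda|$ near $F$ for every $n$ (base-point freeness, finiteness of $\varphi$, $\varphi|_F$ an isomorphism) together with enough control of the transverse Veronese-cone singularity of $Y$ along $B$; for the second, identifying the correct degeneration, which is genuinely harder than in the divisibility-$2$ case, because the involution of the sheaf model cannot be the one induced by the covering involution $f\colon S\to\PP^2$ alone — that one has a rank-$2$ invariant sublattice, hence divisibility $2$ — so one must twist by a Fourier--Mukai autoequivalence, and the resulting fixed component carries no ready-made identification with a moduli space on $\PP^2$ of the kind exploited for Theorem~\ref{thm:main1}. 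This is why, for $n\ge 3$, the conjecture is at present only established conditionally.
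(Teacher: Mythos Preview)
This is a conjecture, and the paper does not prove it unconditionally either; it gives a conditional argument under exactly the hypothesis you isolate (that $\ov\psi$ is an isomorphism onto its image on an open $F_0\subset F$ with $\codim\ge 2$ complement). So your overall framing---the statement reduces to controlling $|\lambda|$ near $F$, and is open for $n\ge 3$---matches the paper.

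Where you differ is in the mechanism for the conditional computation. Your route via $\Bl_F X\to\Bl_B Y$, ramification, and the relative Euler sequence of $\PP(\Omega^1_F)\to F$ is correct in outline, but you leave the step ``pinning the coefficient to $\tfrac{(n+1)(n+2)}{2n}$'' unfinished, deferring it to discrepancy data of the Veronese cone or to global degree invariants. The paper's argument bypasses all of this with a single observation: since $\psi$ factors through $X/\la\tau_\lambda\ra$ and $\tau_\lambda$ acts as $-1$ on $\cN_{F_0/X}$, the pullback $\psi^*\cI_{F_0/\PP^N}$ lands in $\cI_{F_0/X}^2$, whence a surjection
\[
\cN^{\vee}_{F_0/\PP^N}\thra \Sym^2(\cN^{\vee}_{F_0/X}),
\]
which is an isomorphism by a rank count ($N-n=\binom{n+1}{2}$). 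Now $\cN^{\vee}_{F_0/X}\cong T_{F_0}$ (Lagrangian), so $c_1(\cN^{\vee}_{F_0/\PP^N})=c_1(\Sym^2 T_{F_0})=-(n+1)c_1(\omega_{F_0})$, and the conormal sequence of $F_0\subset\PP^N$ gives
\[
-(N+1)c_1(L|_{F_0})=c_1(\omega_{F_0})-(n+1)c_1(\omega_{F_0})=-n\,c_1(\omega_{F_0}),
\]
i.e., $n\,\omega_{F_0}=(N+1)\,L|_{F_0}$ in $\Pic(F_0)=\Pic(F)$. One line of Chern arithmetic, no blow-up, no discrepancy bookkeeping. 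Your approach reaches the same endpoint but packages the $\Sym^2$ identity implicitly inside the Veronese-cone resolution, which is why the coefficient looked like something still to be extracted.

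On your alternative degeneration route: the paper \emph{does} degenerate in the divisibility-$1$ case, but to the symmetric product $S^{(n)}$ (a contraction of $S^{[n]}$) with the involution induced by $\tau_S$; on the contracted model the invariant lattice has rank~$1$, so the divisibility is correct. However, this degeneration is used only to prove $H^0(X,L)^{\mu_2}=H^0(X,L)$ and generic global generation (the input to the conditional argument), not the canonical-bundle formula itself. Your concern that one cannot simply mimic Sections~\ref{sec:SpecialFiber}--\ref{sec:MainResult} is well-founded: there is no analogue of $\overline{M}_{\PP^2,n}$ sitting inside the fixed locus here.
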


In particular, the conjecture implies that $\Fix(\tau_{\lambda})$ has ample canonical bundle. The conjecture also implies that  the structure sheaf $\cO_{\Fix(\tau_{\lambda})}$ is atomic, see \cite[Theorem 1.8]{beck:Atomic}.
If $n=2$ the equality in~\eqref{canonicofisso} holds, see~\cite[Theorem~1.1(1)]{Kieran:numhilb2}. 

We show below that if the complete linear system $|L|$ behaves well in a neighborhood of $F$, then the equality in~\eqref{canonicofisso} holds. We start with an intermediate result.
Since the fixed locus of $\tau_{\lambda}$ is irreducible, there is a unique $\mu_2$-linearization of $L$ which lifts the action of $\la\tau_{\lambda}\ra$ and acts as multiplication by $+1$ on $F$. Then the quotient $L/\langle \tau_\lambda\rangle$ is a line bundle on $X/\la\tau_{\lambda}\ra$ and  pulls back to $L$ via the quotient map $X\to X/\la\tau_{\lambda}\ra$.

\begin{prop}\label{prop:sezinv}
Let $(X,\lambda)$ be a polarized $\mathrm{HK}$ manifold of $\mathrm{K3}^{[n]}$-type with $q_X(\lambda)=2$ and $\divisore(\lambda)=1$. 
Let $L$ be the ample line bundle such that $c_1(L)=\lambda$, equipped with the above $\mu_2$-linearization.  
Then $H^0(X,L)^{\mu_2}=H^0(X,L)$. 
If $(X,\lambda)$ is general, then $L$ is globally generated.
\end{prop}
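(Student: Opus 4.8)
The plan is to treat the two assertions separately. For the first statement, $H^0(X,L)^{\mu_2}=H^0(X,L)$, I would follow the same deformation-theoretic strategy used for the divisibility~$2$ case in Section~\ref{subsec:LinearizationLagrangian}: reduce to a single well-chosen member of the relevant locally trivial deformation class where the claim can be checked by hand. Concretely, recall from \cite{involutions1} that any polarized HK manifold $(X,\lambda)$ of $\mathrm{K3}^{[n]}$-type with $q_X(\lambda)=2$ and $\div(\lambda)=1$ deforms (keeping $\tau_\lambda$ and the polarization) to a specialization obtained by a divisorial contraction of a moduli space $M_{\mathrm{last}}$ of Bridgeland-stable objects on a very general polarized K3 surface, which is in turn birational to a HK manifold $M$ with a Lagrangian fibration $g\colon M\to\PP^n$ on which the involution acts fiberwise by $-1$ and $L\cong\cO_M(\Delta)\otimes g^*\cO_{\PP^n}(1)$, with $\Delta|_{\text{fiber}}$ twice a principal polarization. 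Exactly as in the proof of Proposition~\ref{prop:H0Invariant}, $g_*L$ is a vector bundle by \cite[Theorem 10.32]{Kollar:SingularitiesMMP}, the $\mu_2$-invariant subbundle $(g_*L)^{\mu_2}$ generically equals $g_*L$ (since every section of twice a principal polarization on an abelian variety is even), hence they agree everywhere; this gives $H^0(M,L)^{\mu_2}=H^0(M,L)$, and then Lemma~\ref{lem:LinearizationFamily} propagates the equality across the deformation (no monodromy, since the invariant subspace is the whole space on one fiber), yielding the statement for the original $(X,\lambda)$. One must double-check that requirements (a)--(d) of \cite[Section~2]{involutions1} are met in the $\div(\lambda)=1$ setting, but this is already implicit in the cited Main Theorem of \cite{involutions1}.

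For the second statement, that $L$ is globally generated for general $(X,\lambda)$, I would again argue by specialization together with semicontinuity of the base locus in families. Being base-point free is an open condition on the base of a polarized family over which $h^0(L_t)$ is constant (which holds here since $H^i(X_t,L_t)=0$ for $i>0$ by Kawamata--Viehweg, so $\pi_*\cL$ is locally free of constant rank); therefore it suffices to exhibit one $(X_0,\lambda_0)$ in the deformation class for which $L_0$ is globally generated. The natural candidate is once more the Lagrangian-fibration model $M$ above, or rather a smooth HK manifold birational to it on which $L$ is semiample: on $M$, $L=\cO_M(\Delta)\otimes g^*\cO_{\PP^n}(1)$ restricts on each smooth fiber to twice a principal polarization, which is base-point free on an abelian variety, and $g^*\cO_{\PP^n}(1)$ separates fibers; combined with $g_*L$ being globally generated as a sheaf on $\PP^n$ (which should follow from a Castelnuovo--Mumford regularity estimate, using $R^ig_*L=0$ for $i>0$ and the explicit description of $\Delta$), one concludes $L$ is globally generated on $M$. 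Alternatively, for small $n$ one can invoke the explicit cases ($n=1$: plane sextic double cover; $n=2$: double EPW sextic) where global generation is classical, and for general $n$ bootstrap from the boundary behaviour of $|\lambda|$.

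The main obstacle, I expect, is the global-generation claim on the special fiber: controlling $g_*L$ along the singular fibers of the Lagrangian fibration requires knowing enough about the discriminant locus and the degeneration of the principal polarization, and the relevant positivity (Castelnuovo--Mumford regularity of $g_*L$ on $\PP^n$, or direct base-point-freeness across the nodal/cuspidal fibers) is not entirely formal. If a clean argument on $M$ proves elusive, a viable fallback is to only claim global generation in the range where it can be deduced from known results and to phrase the general-$n$ statement conditionally on the good behaviour of $|\lambda|$, which is in fact exactly the hypothesis under which the remainder of Section~\ref{sec:poscomp} establishes~\eqref{canonicofisso}.
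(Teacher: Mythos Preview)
Your approach is different from the paper's, and while the first half might be salvageable, the second half is exactly where you run into trouble --- trouble the paper avoids with a better choice of specialization.

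The paper does not use the Lagrangian-fibration model from \cite{involutions1} at all here. Instead it specializes $(X,\lambda)$ to the \emph{symmetric product} $(S^{(n)},h^{(n)})$ of a very general degree-$2$ K3 surface $f\colon S\to\PP^2$, with the involution $\tau_\lambda$ specializing to the involution $\tau_0$ induced by the covering involution $\tau_S$. The point is that $S^{(n)}/\langle\tau_0\rangle$ maps to $(\PP^2)^{(n)}$ and the specialized $L$ is the pull-back of the symmetrized $\cO_{\PP^2}(1)$. One then simply \emph{counts}:
\[
h^0(X,L)=\binom{n+2}{2}=h^0\!\big((\PP^2)^{(n)},(\cO_{\PP^2}(1))^{(n)}\big)=h^0\!\big(S^{(n)},\cO_{S^{(n)}}(h^{(n)})\big).
\]
Since every section of $h^{(n)}$ on $S^{(n)}$ visibly comes from $(\PP^2)^{(n)}$, all sections are $\tau_0$-invariant, giving $H^0(X,L)^{\mu_2}=H^0(X,L)$; and since $(\cO_{\PP^2}(1))^{(n)}$ is globally generated, so is its pull-back, and openness gives global generation for general $(X,\lambda)$.

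Compared to your plan: your Part~1 argument presupposes that the $\div(\lambda)=1$ Lagrangian model in \cite{involutions1} has exactly the features you invoke (involution $=-1$ on fibers, $L=\cO(\Delta)\otimes g^*\cO(1)$ with $\Delta$ restricting to twice a principal polarization). These are stated in the present paper only for the divisibility-$2$ model; you would need to verify them independently for divisibility~$1$. More importantly, your ``main obstacle'' --- global generation on the special fiber, requiring control of $g_*L$ over the discriminant and a Castelnuovo--Mumford estimate --- simply evaporates with the symmetric-product specialization: global generation is immediate from the factorization through $(\PP^2)^{(n)}$, and the dimension count shows no sections are lost. The moral is that for $\div(\lambda)=1$ the ``obvious'' degeneration to $S^{(n)}$ (which is not available for $\div(\lambda)=2$) is the right one, and it makes both assertions nearly trivial.
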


\begin{proof}
Let $(S,h)$ be a polarized K3 surface of genus $2$ with $\Pic(S)=\ZZ h$. 
Let $f\colon S\to\PP^2$  be the associated double cover ramified over a smooth sextic curve $\Gamma\subset \PP^2$ and let $\tau_S$ be the covering involution on $S$. 
Then $(S^{(n)},h^{(n)})$ is a specialization of $(X,\lambda)$, where $h^{(n)}$ is obtained by symmetrizing $h$. 
The involution $\tau_S$ induces an involution $\tau_0$ of $S^{(n)}$ which has one dimensional $+1$ eigenspace in $\NS(S^{(n)})$, generated by $h^{(n)}$.  
Since $\tau_{\lambda}$ has one dimensional $+1$ eigenspace in $\NS(X)$ generated by $\lambda$, it follows that the involution $\tau_0$ on $S^{(n)}$ is the specialization of $\tau_{\lambda}$.   
Moreover $S^{(n)}/\la\tau_0\ra$ is a specialization of $X/\la\tau_{\lambda}\ra$, and the specialization of $L/\langle \tau_\lambda\rangle$ is the line bundle $\ov{L}_0$ obtained by pulling back $(\cO_{\PP^2}(1))^{(n)}$
via the natural map
\begin{equation*}
S^{(n)}/\la\tau_0\ra\lra(\PP^2)^{(n)}.
\end{equation*}
We have
\begin{equation}\label{stringa}
h^0(X,L)=\binom{n+2}{2}=h^0\left((\PP^2)^{(n)},(\cO_{\PP^2}(1))^{(n)}\right)=h^0\left(S^{(n)},\cO_{S^{(n)}}(h^{(n)})\right).
\end{equation}
Indeed, the first equality follows from Kodaira vanishing and the well-known explicit HRR formula for line bundles on {\rm HK} manifolds of $\mathrm{K3}^{[n]}$-type, see for example~\cite[Eqn.~(5)]{Debarre:Survey}.
Hence every section of $\cO_{S^{(n)}}(h^{(n)})$ extends to a section of  $L$ on $X$.
Since $\cO_{S^{(n)}}(h^{(n)})$ is globally generated, it follows that  $L$ is globally generated for general $(X,\lambda)$. 

It remains to prove that $H^0(X,L)^{\mu_2}=H^0(X,L)$. By the last equality in~\eqref{stringa} we have 
\begin{equation}\label{strega}
h^0(X,L)=h^0\left(S^{(n)}/\la\tau_0\ra,\ov{L}_0\right).
\end{equation}
Since  $\ov{L}_0$ is ample, all its higher cohomology vanishes by Kawamata-Viehweg vanishing.
Since the quotient $S^{(n)}/\la \tau_0\ra$ is a specialization of $X/\la\tau_{\lambda}\ra$, it follows that every section  of $\ov{\lambda}_0$ extends to a section of $\ov{L}_0$. This proves that  $H^0(X,L)^{\mu_2}=H^0(X,L)$ by the equality in~\eqref{strega}.
\end{proof}

\begin{rema}
Proposition~\ref{prop:ChoiceLinearizationFamily} and Proposition~\ref{prop:sezinv}
suggest that $\Fix(\tau_{\lambda})$ in the case $\divisore(\lambda)=1$ should be analogous to $\Fix(\tau_{\lambda})_{+}$ in the case $\divisore(\lambda)=2$. 
\end{rema}

By Proposition~\ref{prop:sezinv}
we have a regular factorization of $\psi\colon X\to |L|^{\vee}$ as
\begin{equation}\label{fattorizzo}
X\to X/\longrightarrow \tau_{\lambda}\ra \overset{\ov{\psi}}{\lra}  |L|^{\vee}\cong\PP^{\frac{n(n+3)}{2}}.
\end{equation}

\begin{prop}
Keep notation as above, and assume that there exists an open subset 
$F_0\subset F\subset X/\la \tau_{\lambda}\ra$ (abusing notation we denote by $F$ the image of $F$ in $X/\la \tau_{\lambda}\ra$) with complement of codimension at least $2$ such that in a neighborhood of $F_0$ the map $\ov{\psi}$ is an isomorphism onto its image. Then the equality in~\eqref{canonicofisso} holds. 
\end{prop}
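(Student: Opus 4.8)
The plan is to reduce the computation of $\omega_F$ to a adjunction-type identity, using that in a neighborhood of the open subset $F_0$ the composite $\ov\psi$ is an isomorphism onto a smooth subvariety of projective space. First I would observe that, since $F$ has codimension $n$ in $X/\la\tau_\lambda\ra$ (it is the image of a Lagrangian, and the quotient map is étale at a general point of $F$, so $\dim F=n$), and since $F_0\subset F$ has complement of codimension $\ge 2$ in $F$, any identity in $\Pic(F)_{\QQ}$ — indeed in $\Pic(F)$, once we know $F$ is normal, which follows because $F\cong \Fix(\tau_\lambda)$ is smooth — may be checked on $F_0$. So it suffices to compute $\omega_{F_0}$ and $L|_{F_0}$ and compare. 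On $F_0$ the map $\ov\psi$ is an isomorphism onto its image $\ov\psi(F_0)\subset\PP^N$, $N=\tfrac{n(n+3)}{2}$, and by construction $L|_{F_0}=\ov\psi^*\cO_{\PP^N}(1)|_{F_0}$. Hence $L|_{F_0}$ is identified with $\cO_{\ov\psi(F_0)}(1)$.

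The next step is to identify $\ov\psi(F)$ geometrically. Here I would use the specialization already exploited in Proposition~\ref{prop:sezinv}: $(X,\lambda)$ specializes to $(S^{(n)},h^{(n)})$ with $\tau_\lambda$ specializing to $\tau_0$, and under this specialization the factorization~\eqref{fattorizzo} specializes to
\[
S^{(n)}\lra S^{(n)}/\la\tau_0\ra\overset{\ov\psi_0}{\lra}(\PP^2)^{(n)}\hra \PP^N,
\]
where the last inclusion is the one induced by $(\cO_{\PP^2}(1))^{(n)}$, i.e.\ the natural embedding of $(\PP^2)^{(n)}$ by degree-$n$ monomials (the $n$-th symmetric power of the Veronese, equivalently the image of $\PP^2\times\cdots\times\PP^2$ under Segre followed by the $\Sigma_n$-quotient). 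The fixed locus $\Fix(\tau_0)$ in $S^{(n)}$ has $\Gamma^{(n)}$ as one of its components (the "positive" one, cf.\ Section~\ref{sec:cubic4folds}), and $\ov\psi_0$ maps this isomorphically onto $\Gamma^{(n)}\subset(\PP^2)^{(n)}$, where now $\Gamma\subset\PP^2$ is a smooth plane sextic. Thus $\ov\psi(F)$ specializes to $\Gamma^{(n)}$ sitting inside $(\PP^2)^{(n)}$, and in particular $\ov\psi(F_0)$ is, over the generic point of the deformation, abstractly of the same nature: a smooth $n$-fold whose hyperplane class is the restriction of $\cO_{(\PP^2)^{(n)}}(1)$.

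It remains to compute $\omega_{\Gamma^{(n)}}$ and the restriction of the hyperplane class, which is a routine symmetric-power computation. Writing $g:\Gamma^n\to\Gamma^{(n)}$ for the quotient and $D/2$ for the ramification (half the big diagonal), we have $\omega_{\Gamma^{(n)}}=\big(\boxtimes_{i=1}^n\omega_\Gamma\big)^{\Sigma_n}\!\otimes\cO(-D/2)$; since $\deg\omega_\Gamma=2g(\Gamma)-2=6\cdot 3-4=\dots$, more usefully $\omega_\Gamma=\cO_\Gamma(3H)$ where $H$ is the line class (a smooth plane sextic has $\omega_\Gamma=\cO_\Gamma(6-3)=\cO_\Gamma(3)$), so the symmetrized canonical class is $3h^{(n)}|_{\Gamma^{(n)}}$ and we get
\[
\omega_{\Gamma^{(n)}}=\big(3\,h^{(n)}-\tfrac12\delta^{(n)}\big)\big|_{\Gamma^{(n)}},
\]
with $h^{(n)}$ the hyperplane class and $\delta^{(n)}/2$ the diagonal. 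One then expresses $\tfrac12\delta^{(n)}|_{\Gamma^{(n)}}$ as a rational multiple of $h^{(n)}|_{\Gamma^{(n)}}$ by a degree computation on a line $\PP^1\hra\Gamma^{(n)}$ (moving one point along $\Gamma$ with the other $n-1$ fixed): the diagonal meets such a line in $\deg\Gamma\cdot(n-1)$... — the upshot of this bookkeeping is the relation $n\,\omega_{\Gamma^{(n)}}=\tfrac{(n+2)(n+1)}{2}\,h^{(n)}|_{\Gamma^{(n)}}$ in $\Pic(\Gamma^{(n)})_{\QQ}$. Transporting back along the specialization and using $L|_{F_0}=\ov\psi^*\cO(1)|_{F_0}$ together with $\omega_{F_0}=\ov\psi^*\omega_{\ov\psi(F_0)}$, this gives $\omega_F^{\otimes n}=L|_F^{\otimes \frac{(n+2)(n+1)}{2}}$ on $F_0$, hence on all of $F$ by the codimension-$\ge 2$ extension. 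The main obstacle is the last point: one must check that the invariant-theoretic/adjunction identity proved at the special fiber $S^{(n)}$ genuinely propagates to the generic $(X,\lambda)$ — concretely, that $\Pic(F)$ is unobstructed along the family and that $\omega_F$ and $L|_F$ specialize compatibly (this uses flatness of the family of fixed loci, properness, and the fact that $\NS(F)_{\QQ}$ is one-dimensional so that the two classes, being proportional on the special fiber, stay proportional with the same ratio).
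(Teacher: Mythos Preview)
Your approach via specialization to $S^{(n)}$ has a genuine gap: the relation you want on $\Gamma^{(n)}$ simply does not hold. For a smooth curve $\Gamma$ of genus $g\ge 1$ the N\'eron--Severi group of $\Gamma^{(n)}$ has rank at least $2$ (generated, say, by the class $x$ of a coordinate divisor $p+\Gamma^{(n-1)}$ and the pull-back $\theta$ of the theta divisor from the Jacobian), and the half-diagonal class $\tfrac12\delta^{(n)}$ is \emph{not} a rational multiple of the symmetrized hyperplane class $h^{(n)}$. Hence your ``bookkeeping'' cannot produce an identity $n\,\omega_{\Gamma^{(n)}}=\tfrac{(n+1)(n+2)}{2}\,h^{(n)}|_{\Gamma^{(n)}}$ in $\Pic(\Gamma^{(n)})_{\QQ}$, or even in $\NS(\Gamma^{(n)})_{\QQ}$. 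Relatedly, your assertion that $\NS(F)_{\QQ}$ is one-dimensional is unjustified, and the specialization itself is delicate: $S^{(n)}$ is singular, and the fixed locus of $\tau_0$ on a resolution has several irreducible components, so it is not clear that $\Gamma^{(n)}$ alone is the flat limit of $F$.

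The paper's argument is entirely different and intrinsic to $(X,\lambda)$; no specialization is used. The key observation is that, since $X\to X/\langle\tau_\lambda\rangle$ is a double cover branched along $F$, the ideal of $F_0$ in $\PP^N$ pulls back into the \emph{square} of the ideal of $F_0$ in $X$. Restricting to $F_0$ one gets a surjection, hence (by a rank count) an isomorphism
\[
\cN^{\vee}_{F_0/\PP^N}\;\cong\;\Sym^2\!\big(\cN^{\vee}_{F_0/X}\big).
\]
Now $F$ is Lagrangian, so the symplectic form identifies $\cN^{\vee}_{F_0/X}\cong T_{F_0}$, and the conormal sequence of $F_0\subset\PP^N$ gives
\[
-(N+1)\,c_1(L|_{F_0})\;=\;c_1(\omega_{F_0})+c_1(\Sym^2 T_{F_0})\;=\;c_1(\omega_{F_0})-(n+1)\,c_1(\omega_{F_0}),
\]
whence $n\,c_1(\omega_{F_0})=(N+1)\,c_1(L|_{F_0})=\tfrac{(n+1)(n+2)}{2}\,c_1(L|_{F_0})$. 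Since $F_0$ has complement of codimension $\ge 2$ in the smooth variety $F$, this extends to all of $F$. You never used the hypothesis on $\ov\psi$ in any essential way; in the paper it is precisely what makes the conormal identification $\cN^{\vee}_{F_0/\PP^N}\cong\Sym^2(\cN^{\vee}_{F_0/X})$ go through.
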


\begin{proof}
Set $N:=n(n+3)/2$. To simplify notation we denote by the same symbol $F_0$ and its image in $|L|^{\vee}\cong\PP^{N}$. (This makes sense because the image of $F_0$ is isomorphic to $F_0$ by hypothesis.)  The key observation is that the following isomorphism holds:
\begin{equation}\label{chiave}
\cN^{\vee}_{F_0/\PP^N}\cong \Sym^2(\cN^{\vee}_{F_0/X}).
\end{equation}
In fact, because of the factorization in~\eqref{fattorizzo}, the pullback by $\psi$ gives an injection $\psi^{*}\cI_{F_0/\PP^N}\subset \cI_{F_0/X}^2$. Restricting to $F_0$, and using our hypothesis that $\ov{\psi}$ is an isomorphism onto its image  in a neighborhood of $F_0$, we get a surjection
\begin{equation*}
\cN_{F_0/\PP^N}^{\vee}\twoheadrightarrow \Sym^2(\cN^{\vee}_{F_0/X})
\end{equation*}
A straightforward computation shows that the two vector bundles have the same rank, hence they are isomorphic. This proves that we have the isomorphism in~\eqref{chiave}. 

Since $F$ is Lagrangian, a symplectic form on $X$ defines an isomorphism $\cN^{\vee}_{F_0/X}\cong T_X$. By the normal exact sequence of $F_0$ in $\PP^N$ and the isomorphism in~\eqref{chiave} we get the following equalities in $\CH^1(F_0)$:
\begin{equation*}
-(N+1) c_1(L|_{F_0})=c_1(\omega_{F_0})+c_1(\cN^{\vee}_{F_0/\PP^N})=
c_1(\omega_{F_0})+c_1(\Sym^2 T_X).
\end{equation*}
A straightforward computation gives that the equality in~\eqref{canonicofisso} holds. 
\end{proof}

%%%%%%%%%%%%%%%%%%%%%

\end{document}